\newcommand{\cD}{\ensuremath{\mathcal D}}
\newcommand{\cG}{\ensuremath{\mathcal G}}
\newcommand{\cH}{\ensuremath{\mathcal H}}
\newcommand{\eps}{\varepsilon}
\renewcommand{\phi}{\varphi}
\renewcommand{\rho}{\varrho}
\DeclareMathOperator*{\N}{\mathbb{N}}
\DeclarePairedDelimiter\floor{\lfloor}{\rfloor}
\let\setminus=\smallsetminus
\declaretheorem[parent=section]{theorem}
\declaretheorem[sibling=theorem]{lemma}
\declaretheorem[sibling=theorem]{proposition}
\declaretheorem[sibling=theorem]{claim}
\setlist{itemsep=0.1em, topsep=0.1em, parsep=0.1em, partopsep=0.1em}
\colorlet{RoyalRed}{red!70!black}
\definecolor{RoyalBlue}{rgb}{0.25, 0.41, 0.88}
\definecolor{RoyalAzure}{rgb}{0.0, 0.22, 0.66}
\newlength{\bibitemsep}\setlength{\bibitemsep}{0.5pt}
\newlength{\bibparskip}\setlength{\bibparskip}{0.5pt}
\let\oldthebibliography\thebibliography
\renewcommand\thebibliography[1]{%
  \oldthebibliography{#1}%
  \setlength{\parskip}{\bibitemsep}%
  \setlength{\itemsep}{\bibparskip}%
}
\newcounter{propcnt} % counter
\newlist{alphenum}{enumerate}{1}
\setlist[alphenum,1]{%
  label=\normalfont{\bfseries{(\Alph{propcnt}{\arabic*})}},
  ref=\normalfont{(\Alph{propcnt}{\arabic*})},
  leftmargin = \parindent+3em,
}
\title{Ramsey numbers for multiple copies of sparse graphs}
\author{
  Aurelio Sulser\thanks{Department of Mathematics, ETH Z\"{u}rich, 8092
  Z\"{u}rich, Switzerland. Email: \texttt{asulser@student.ethz.ch}}
  \and
  Milo\v{s} Truji\'{c}\thanks{Institute of Theoretical Computer Science, ETH
  Z\"{u}rich, 8092 Z\"{u}rich, Switzerland. Email: \texttt{mtrujic@inf.ethz.ch}.
  Research supported by grant no.\ 200020 197138 of the Swiss National Science
  Foundation.}
}
\date{}
\begin{document}
\maketitle

\begin{abstract}
  For a graph $H$ and an integer $n$, we let $nH$ denote the disjoint union of
  $n$ copies of $H$. In 1975, Burr, Erd\H{o}s, and Spencer initiated the study
  of Ramsey numbers for $nH$, one of few instances for which Ramsey numbers are
  now known precisely. They showed that there is a constant $c = c(H)$ such that
  $r(nH) = (2|H| - \alpha(H))n + c$, provided $n$ is sufficiently large.
  Subsequently, Burr gave an implicit way of computing $c$ and noted that this
  long term behaviour occurs when $n$ is triply exponential in $|H|$. Very
  recently, Buci\'{c} and Sudakov revived the problem and established an
  essentially tight bound on $n$ by showing $r(nH)$ follows this behaviour
  already when the number of copies is just a single exponential. We provide
  significantly stronger bounds on $n$ in case $H$ is a sparse graph, most
  notably of bounded maximum degree. These are relatable to the current state of
  the art bounds on $r(H)$ and (in a way) tight. Our methods rely on a beautiful
  classic proof of Graham, R\"{o}dl, and Ruci\'{n}ski, with the emphasis on
  developing an efficient absorbing method for bounded degree graphs.
\end{abstract}

\section{Introduction}

Ramsey theory is a big and important area of mathematics that revolves around a
simple paradigm: `In every large chaotic system there has to exist a
well-organised subsystem'. Here we are concerned with graph Ramsey theory. A
classical result in this area, from which the whole theory derives its name, is
a theorem of Ramsey~\cite{ramsey1929problem} from 1929. It states that for every
graph $H$ there is a large integer $N$ such that in every $2$-colouring of the
edges of the complete graph on $N$ vertices $K_N$ there is a monochromatic copy
of $H$. The smallest $N$ with this property is called the \emph{Ramsey number}
of $H$ and denoted by $r(H)$.

The original question was mostly focused on determining Ramsey numbers for
complete graphs $K_k$. Despite much effort (see, e.g.~\cite{conlon2009new,
erdos1947some, erdos1935combinatorial, sah2020diagonal, spencer1975ramsey}, as
well as a survey~\cite{conlon2015recent}) the standard bounds $2^{k/2} \leq
r(K_k) \leq 2^{2k}$ remain unchanged at large. It is not of much surprise that
as an effect of this the theory branched out attempting to determine $r(H)$ for
general classes of graphs $H$. One such direction of fundamental interest for
its own right is that of \emph{sparse graphs}. Arguably, the most natural way of
imposing sparsity is to restrict the degrees of vertices. Addressing precisely
that, Burr and Erd\H{o}s~\cite{burr1975magnitude} asked in 1975 whether every
$k$-vertex graph $H$ with maximum degree $\Delta$ satisfies $r(H) \leq
c(\Delta)k$, for some constant $c(\Delta)$ that depends only on $\Delta$. In one
of the earliest applications of the celebrated Szemer\'{e}di's regularity lemma,
Chvat\'{a}l, R\"{o}dl, Szemer\'{e}di, and Trotter~\cite{chvatal1983ramsey}
famously showed this to indeed be the case. However, due to the techniques used
the constant $c(\Delta)$ is inevitably extremely large
(see~\cite{gowers1997lower}). (We come back to this issue later.) The current
state of the art result in the area is by Conlon, Fox, and
Sudakov~\cite{conlon2012two} who showed $r(H) \leq 2^{O(\Delta\log\Delta)}k$,
which is only by the logarithmic factor in the exponent away from the lower
bound of $2^{\Omega(\Delta)}k$ (see \cite{graham2000graphs}).

In this paper we dive into a related problem of determining Ramsey numbers for
the disjoint union of $n$ copies of a graph $H$, denoted by $nH$. An intriguing
fact about the class $nH$ is that it is one of the rare instances for which the
Ramsey numbers are known to a significant extent. Studying $r(nH)$ was initiated
by Burr, Erd\H{o}s, and Spencer~\cite{burr1975ramsey} in 1975 who proved that
for any $k$-vertex graph $H$ without isolated vertices, there is a $c = c(H)$
such that if $n$ is sufficiently large, then
\begin{equation}\label{eq:long-term-behaviour}
  r(nH) = (2k - \alpha(H))n + c,
  \tag{$\star$}
\end{equation}
where $\alpha(H)$ stands for the order of the largest independent set of $H$.
To illustrate all this with a concrete example, they showed $r(nK_3) = 5n$, and
so $c(K_3) = 0$ and one can even take $n \geq 2$. The general result came with
two deficiencies pointed out by the authors themselves: it provides no way of
computing $c(H)$ nor does it quantify how large $n$ has to be for the long term
behaviour \eqref{eq:long-term-behaviour} to hold, other than it being finite.

An answer to both of these came from Burr~\cite{burr1987ramsey} who gave an
effective, yet cumbersome, way of computing $c(H)$ and, even though this was not
the focus of his research, established that the bound above starts to hold when
$n$ is a triply exponential function in $k$. In a way this gives an
unsatisfactory answer, as it may well be that the behaviour in
\eqref{eq:long-term-behaviour} starts much earlier---already when $n =
O(r(H)/|H|)$, which would be optimal. A big advancement in this direction was
recently made by Buci\'{c} and Sudakov~\cite{bucic2021tight}, showing that one
can take $n$ to be as small as $2^{O(k)}$, which is essentially tight (up to the
constant in the exponent), e.g.\ when $H = K_k$.

Our main result combines the two areas of research discussed above and provides
a leap towards the optimal bound on $n$ when $H$ is a $k$-vertex graph with
maximum degree $\Delta$.

\begin{theorem}\label{thm:main-symmetric}
  Let $H$ be a $k$-vertex graph with no isolated vertices and maximum degree
  $\Delta$. There is a constant $c = c(H)$ and $n_0 =
  2^{O(\Delta\log^2\Delta)}k$ such that for all $n \geq n_0$
  \[
    r(nH) = (2k - \alpha(H))n + c.
  \]
\end{theorem}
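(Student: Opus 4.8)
My plan is to match the known Burr--Erd\H{o}s--Spencer lower bound with an upper bound that already holds for $n \ge n_0 = 2^{O(\Delta\log^2\Delta)}k$. For the lower bound there is essentially nothing new: the tight construction, and with it the precise additive constant $c = c(H)$, is due to Burr--Erd\H{o}s--Spencer and Burr and is available far below $n_0$ --- one partitions the vertex set of $K_{(2k-\alpha)n+c-1}$ as $A \cup B$ with $|A|$ about $kn$, colours $E(K_A)$ red and all other edges blue, so that no red copy of $H$ can leave $A$ (too few vertices bear any red edge) while $|A| < kn$ kills a red $nH$, and every blue copy of $H$ must place at least $k-\alpha$ of its vertices in $B$ where $|B|$ is chosen just below $(k-\alpha)n$; squeezing out the exact value of $c$ uses a finer, Burr-style construction built around a near-extremal colouring of $r(H)$ vertices. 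Thus the content of the theorem is the upper bound $r(nH) \le (2k-\alpha)n + c$ for $n \ge n_0$, which I would prove by a quantitative version of the Burr--Erd\H{o}s--Spencer shifting argument combined with an absorbing method.

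Fix a red/blue colouring of $K_N$ with $N = (2k-\alpha)n+c$ and let $\mathcal C = \{C_1,\dots,C_m\}$ be a maximum family of vertex-disjoint red copies of $H$. If $m \ge n$ then red already contains $nH$, so assume $m \le n-1$ and set $R = V \setminus V(\mathcal C)$; then the red graph on $R$ is $H$-free and $|R| = N - mk$ exceeds $(k-\alpha)n$ by a constant. A short rotation/augmentation argument, using that $R$ induces no red $H$, shows that the maximality of $\mathcal C$ forces the \emph{blue} bipartite graph between $R$ and each $C_i$ to be dense. Now fix a maximum independent set $I$ of $H$, $|I| = \alpha$, and let $H' = H - I$, a graph on $k-\alpha$ vertices whose ``interface'' to $I$ involves at most $\alpha\Delta$ vertices, each with at most $\Delta$ neighbours in $I$. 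The plan is to assemble $n$ pairwise disjoint blue copies of $H$: for each $C_i$, one blue copy that uses a carefully chosen $\alpha$-subset of $V(C_i)$ as the image of $I$ --- permissible because $I$ spans no edge of $H$, and realisable because the blue connection from $C_i$ into $R$ is dense, so a random such $\alpha$-subset has many common blue neighbours in $R$ --- completed by a blue copy of $H'$ inside $R$ meeting the bounded interface; and then $n-m$ ordinary blue copies of $H$ inside $R$. Such a family consumes only $(k-\alpha)m + k(n-m) = nk - \alpha m$ vertices of $R$, and $|R| = N-mk$ suffices for every $m \le n-1$ once $N \ge nk+(k-\alpha)m$, i.e. once $N \ge (2k-\alpha)n-(k-\alpha)$; pinning down the \emph{exact} constant $c$ then requires a Burr-style analysis of an $O(r(H))$-vertex remainder and of the precise independent-set structure of $H$, which is where the complicated shape of $c(H)$ enters.

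The heart of the proof, and the only place bounded degree is used essentially, is carrying out the packing of $R$ by blue copies of $H$ and $H'$ \emph{almost perfectly}, leaving only $O(r(H))$ vertices of $R$ uncovered. Since the red graph on $R$ is $H$-free, every $r(H) \le 2^{O(\Delta\log\Delta)}k$ vertices of $R$ span a blue $H$, so most of a blue packing can be built greedily; the trouble is the last few copies together with the interface constraints, and for these I would use absorption. Before the greedy phase one reserves inside $R$ an absorbing set $\mathcal A$ of $\eps n$ vertices, assembled from constant-size ``$H$-absorbing gadgets'' located using the Graham--R\"{o}dl--Ruci\'{n}ski machinery --- which, in any $2$-colouring, produces one colour together with $\chi(H) \le \Delta+1$ very dense vertex classes, each of size at least a $2^{-O(\Delta\log\Delta)}$-fraction of the ground set, into which $H$ embeds by a greedy blow-up-type argument --- chosen so that for any admissible small leftover set $T$ the graph on $\mathcal A \cup T$ has a perfect blue $H$-tiling; feeding the greedy remainder into $\mathcal A$ then completes $n$ disjoint blue copies, a contradiction. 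I expect the main obstacle to be designing these absorbers for an arbitrary bounded-degree $H$, rather than for a single edge or a clique, so that they simultaneously (i) exist inside any monochromatic dense structure, (ii) are robust enough to absorb every admissible remainder, and (iii) are cheap enough not to inflate $n_0$.

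Finally, tracking the costs through this construction --- the $2^{O(\Delta\log\Delta)}$ size of the Graham--R\"{o}dl--Ruci\'{n}ski classes, the $2^{O(\Delta\log\Delta)}$ size of each gadget and the rate at which disjoint gadgets can be placed inside the dense structure, and the $r(H) = 2^{O(\Delta\log\Delta)}k$ greedy remainder that $\mathcal A$ must swallow --- is what produces the extra logarithmic factor, yielding the threshold $n_0 = 2^{O(\Delta\log^2\Delta)}k$. This is essentially one logarithm above the current bound $r(H) \le 2^{O(\Delta\log\Delta)}k$, which is the sense in which the result is tight relative to present techniques: any substantial improvement on $n_0$ would seem to require improving the bounded-degree Ramsey bound itself.
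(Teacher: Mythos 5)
Your plan for the upper bound breaks at its first structural step, and the failure shows up already against the extremal colouring. Take the Burr--Erd\H{o}s--Spencer lower-bound colouring on $N = (2k-\alpha)n - 2$ vertices: a red clique $R^\ast$ of order $(k-\alpha)n-1$, a blue clique $B^\ast$ of order $kn-1$, all cross-edges red. A maximum red packing $\mathcal C$ uses $m = n-1$ copies $C_i$, each taking exactly $k-\alpha$ vertices of $R^\ast$ and $\alpha$ vertices of $B^\ast$. In the remainder $R = V \setminus V(\mathcal C)$, the only vertices usable for a blue copy of $H$ are those in $B^\ast$ (anything in $R^\ast$ has all-red incident edges), and there are exactly $|B^\ast| - (n-1)\alpha$ of those. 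Your scheme spends $(n-1)(k-\alpha) + k$ vertices of $R$ on blue copies, and $\bigl(|B^\ast| - (n-1)\alpha\bigr) - \bigl((n-1)(k-\alpha)+k\bigr) = -1$: you are short by exactly one vertex, in agreement with the lower bound. So the assertion that ``maximality of $\mathcal C$ forces the blue bipartite graph $[R,C_i]$ to be dense, so a random $\alpha$-subset of $C_i$ has many common blue neighbours in $R$'' cannot be true in the form you use it. What is really going on is that the colouring can be \emph{tuned} so that the cross-structure between $\mathcal C$ and $R$ is exactly as unhelpful as permitted, and a maximum-red-packing decomposition does not by itself detect or exploit this.

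This is also the conceptual point you are missing: the difficulty in the symmetric problem is not packing one colour against the other but understanding the \emph{interaction} between the red and blue sides, which is precisely what the paper's notion of an $H$-tie (a minimal $2$-coloured graph on $2k-\alpha$ vertices containing both a red $H$ and a blue $H$) captures. The paper's proof proceeds through a structure theorem for a critical colouring of $K_{r(nH)-1}$: using an $H$-tie existence lemma (Lemma~\ref{lem:H-tie-lemma}) together with the efficient absorbers (Lemma~\ref{lem:efficient-absorbers}), it shows that a critical colouring can be re-coloured into a canonical form (red clique $R$, blue clique $B$, monochromatic $[R,B]$, a small exceptional set $E$ that admits no $H$-tie) without creating a monochromatic $nH$, and it is this structure theorem that pins down $c(H)$ (via Ramsey numbers of the classes $\cD(H), \cD_c(H)$). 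Your plan gets the shape of the toolkit right --- absorbers built from gadgets inside Graham--R\"odl--Ruci\'nski-type dense structures, and a $\log\Delta$ overhead arising from iterating those density arguments --- but without the $H$-tie analysis you have no handle on the cross-colour structure, no mechanism for pinning down $c$, and, as the arithmetic above shows, the direct packing argument proves a bound that is strictly below the true value of $r(nH)$ for every $H$ with $k-\alpha\ge 2$. Separately, the absorber you describe (a straightforward Montgomery-style gadget construction) would, built naively, cost roughly $k^2 r(H)$ vertices; the linear-in-$k$ dependence of $n_0$ relies on the paper's more delicate alias-based absorbers using $2$-independent sets, which you do not touch on.
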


The requirement on $n$ differs from the best known upper bound on Ramsey numbers
for bounded degree graphs by only a $\log\Delta$ factor in the exponent, and is
thus (conditionally) almost optimal. Perhaps surprisingly, the linear dependence
on $k$ is also necessary for the long term behaviour
\eqref{eq:long-term-behaviour} to settle. We discuss this further in
Section~\ref{sec:symmetric-problem} (see
Proposition~\ref{prop:symmetric-lower-bound}).

Most of the novel ideas go into establishing a linear dependence on $k$ and
revolve around constructing efficient absorbers (see
Lemma~\ref{lem:efficient-absorbers} below). Our method further relies on a
beautiful proof of Graham, R\"{o}dl, and Ruci\'{n}ski~\cite{graham2000graphs},
who developed a tool for substituting Szemer\'{e}di's regularity lemma from
\cite{chvatal1983ramsey} and drastically reduced the constant $c(\Delta)$
mentioned before to obtain $r(H) = 2^{O(\Delta\log^2\Delta)}k$. In fact, these
density related tools allowed the same group of authors to remove one
$\log\Delta$ factor in case $H$ is a bipartite graph \cite{graham2001bipartite}.
We get a similar improvement on Theorem~\ref{thm:main-symmetric} in case $H$ is
bipartite, shown in the final section of this paper,
Section~\ref{sec:concluding-remarks}.

\paragraph{Sparsity in terms of the number of edges.} Of course, an equally
natural measure of sparsity of a graph is simply the count of its edges.
Inspired by the seemingly unapproachable question (to date there is no progress
on it) of Erd\H{o}s and Graham~\cite{erdos1973partition} on whether among all
graphs with $m = \binom{k}{2}$ edges the complete graph has the largest Ramsey
number, Erd\H{o}s~\cite{erdos1984some} conjectured (see also
\cite{chung1998erdos}) that the Ramsey number of every graph with $m$ edges is
$2^{O(\sqrt{m})}$. This appeared sensible as the number of vertices in a
complete graph with $m$ edges is a constant multiple of $\sqrt{m}$, and at the
same time would thus be optimal. After initial progress of Alon, Krivelevich,
and Sudakov~\cite{alon2003turan}, Sudakov~\cite{sudakov2011conjecture} resolved
Erd\H{o}s' conjecture in the positive.

We show a similar result to that of Theorem~\ref{thm:main-symmetric} for when
$H$ is a graph with $m$ edges.

\begin{theorem}\label{thm:main-symmetric-edges}
  Let $H$ be a graph with $m$ edges and no isolated vertices. There is a
  constant $c = c(H)$ and $n_0 = 2^{O(\sqrt{m}\log^2 m)}$ such that for all $n
  \geq n_0$
  \[
    r(nH) = (2k - \alpha(H))n + c.
  \]
\end{theorem}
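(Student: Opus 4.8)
The plan is to follow the proof of Theorem~\ref{thm:main-symmetric}, with the maximum degree $\Delta$ replaced throughout by the quantity $\sqrt m$. The structural fact enabling this is elementary: since $\sum_{v}\deg_H(v) = 2m$, fewer than $2\sqrt m$ vertices of $H$ have degree larger than $\sqrt m$. Call this set $U$ of ``heavy'' vertices; then $|U| < 2\sqrt m$, the graph $H[W]$ on $W := V(H)\setminus U$ has maximum degree at most $\sqrt m$, and each $w \in W$ sends at most $\sqrt m$ edges into $U$. So, up to the small set $U$, $H$ is a graph of maximum degree $\sqrt m$ (equivalently, $H$ is $O(\sqrt m)$-degenerate). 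Moreover, as $H$ has no isolated vertices, $k \le 2m$, so $k \le 2^{o(\sqrt m)}$ and the linear-in-$k$ factors that were the crux of Theorem~\ref{thm:main-symmetric} become harmless---any overhead of the form $2^{O(\sqrt m\log^2 m)}\,k$ is already of the target order. This extra slack is what we will trade against the presence of the heavy vertices.

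The lower bound $r(nH) \ge (2k - \alpha(H))n + c$ for a suitable $c = c(H)$ and large $n$ is the Burr--Erd\H{o}s--Spencer construction and is insensitive to the sparsity of $H$, so we focus on the upper bound. As in the proof of Theorem~\ref{thm:main-symmetric}, the argument has a \emph{building} phase and an \emph{absorbing} phase. In the building phase one fixes $\eps > 0$, finds in the given $2$-colouring of $K_N$ a dense monochromatic (say red) host of the right size with room reserved for absorbers, and greedily removes $(1-\eps)n$ vertex-disjoint red copies of $H$. The sparsity of $H$ affects this phase only through how efficiently $H$ embeds into dense host graphs, i.e.\ through $r(H)$: here we simply replace the bound $r(H) = 2^{O(\Delta\log^2\Delta)}k$ of \cite{graham2000graphs} by Sudakov's $r(H) = 2^{O(\sqrt m)}$ from \cite{sudakov2011conjecture}, which is comfortably below the target threshold for $n$.

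The heart of the matter is an analogue for $m$-edge graphs of the efficient-absorbers lemma, Lemma~\ref{lem:efficient-absorbers}. The plan is to build the absorbing gadgets around the bounded-degree skeleton $H[W]$ by invoking the bounded-degree construction of Lemma~\ref{lem:efficient-absorbers} with parameter $\sqrt m$, and then to graft on the at most $2\sqrt m$ heavy vertices. For this we use that any $2$-colouring of a sufficiently large complete graph contains a monochromatic $K_{2\sqrt m}$ (since $r(K_{2\sqrt m}) = 2^{O(\sqrt m)}$): a red such clique hosts a copy of $U$ together with all edges of $H[U]$, while the at most $\sqrt m$ edges from each $w \in W$ into $U$ are embedded inside the dense red host, exactly as the edges of the skeleton are. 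Because we do not need the absorber to be \emph{linear} in $k$---polynomial, or even $2^{O(\sqrt m)}k$, is fine---there is ample room in placing and reusing the $K_{2\sqrt m}$'s relative to the flexible parts of the $W$-gadgets. Carrying this out should give absorbers whose Ramsey number is $2^{O(\sqrt m\log^2 m)}$, and then the counting/stability step (using $\alpha(nH) = n\alpha(H)$ and the independence of the two colour classes) pins down $c$ and forces the exact formula once $n \ge n_0 = 2^{O(\sqrt m\log^2 m)}$, exactly as for Theorem~\ref{thm:main-symmetric}.

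The step I expect to be hardest is making the absorbing structure do both jobs at once without inflating the bound on $n$: absorbing the bounded-degree part $H[W]$ with only $2^{O(\sqrt m\log^2 m)}$ overhead (this is Lemma~\ref{lem:efficient-absorbers}), while simultaneously accommodating the heavy set $U$ and every $W$--$U$ edge, which are invisible to the maximum degree of the skeleton. Coordinating, across all $\Theta(n)$ absorbers simultaneously, the monochromatic cliques hosting the copies of $U$ with the adjustable portions of the $W$-gadgets is where the bookkeeping will be most delicate, and it is presumably this coupling that costs us the $\log^2 m$ and prevents reaching Sudakov's clean $2^{O(\sqrt m)}$. Finally, just as for Theorem~\ref{thm:main-symmetric}, one expects that when $H$ is bipartite the denser embedding estimates of \cite{graham2001bipartite} remove one $\log m$ factor, giving $n_0 = 2^{O(\sqrt m\log m)}$.
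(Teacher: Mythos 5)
Your high-level instinct---split $H$ into the $O(\sqrt m)$ ``heavy'' vertices $U$ and a bounded-degree skeleton $H[W]$, host $U$ inside a monochromatic clique (using $r(K_{2\sqrt m}) = 2^{O(\sqrt m)}$), and reuse the $\Delta$-machinery with $\Delta$ replaced by $\sqrt m$---is the right one for \emph{part} of the argument, but your plan for the absorber is not what the paper does and does not obviously work. You propose building an $(H[W],\cdot)$-absorber via Lemma~\ref{lem:efficient-absorbers} and then ``grafting on'' the heavy vertices. But an absorber for $H[W]$ produces $H[W]$-tilings, not $H$-tilings; converting the former to the latter requires consistently attaching a copy of $U$ (with all of $H[U]$ and all $W$--$U$ edges) to \emph{every} $H[W]$-tile that the absorber could possibly output, across all possible leftovers $R$, and it is not clear how to encode this into the resilient template structure without essentially redesigning the absorber from scratch. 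The paper sidesteps this entirely: for the $m$-edge case it invokes the \emph{general} absorbing lemma, Lemma~\ref{lem:absorbing-lemma-general}, which works for arbitrary $H$, requires the host to have order $O(k^{10}r(H)^2)$, and with Sudakov's $r(H)=2^{O(\sqrt m)}$ and $k\le 2m$ this is already $2^{O(\sqrt m)}$, comfortably within budget. So no $\sqrt m$-degree decomposition is needed in the absorbing step at all.

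Where the heavy/light decomposition and the clique-hosting idea actually appear in the paper is in the $H$-tie lemma (Lemma~\ref{lem:H-tie-lemma-edges}), which your sketch essentially skips over. That lemma builds, via a Ramsey-type iteration, two red cliques $L_r, S_r$ and two blue cliques $L_b, S_b$ of orders $\sqrt m/2$ and $\sqrt m/4$ together with large common neighbourhoods in the appropriate colours, then deletes the $\sqrt m/4$ highest-degree vertices from each of $H-I$ and $I$ to get a graph $H'$ of maximum degree $O(\sqrt m)$, applies Lemma~\ref{lem:H-tie-lemma} to obtain an $H'$-tie in the residual neighbourhoods, and finally extends it to an $H$-tie using the cliques. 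This is where the $2^{O(\sqrt m\log^2 m)}$ bound actually originates (it is inherited from Lemma~\ref{lem:H-tie-lemma}, i.e.\ from the bi-dense embedding machinery with $\Delta$ replaced by $O(\sqrt m)$), not from any coupling between absorbers and cliques. The remaining changes to the structural theorem are, as the paper notes, only the replacement of $\alpha(H)\le (1-\frac{1}{\Delta+1})k$ by $\alpha(H)\le k-1$ and of the efficient absorber by the general one, which together cost a polynomial factor in $k$ and are absorbed into the exponential bound. So: the $H$-tie lemma is the genuinely new technical piece you would need to prove, and your proposal does not give it; the absorber, which you spend most of your effort on, comes for free from the general lemma.
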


To put things into context, when $m$ is roughly of the order $k^2$, then
\cite{bucic2021tight} gives an essentially optimal result. However, when $m \ll
k^2/\log^2 k = k^{2-o(1)}$, in other words when $H$ is sparse, then
$2^{O(\sqrt{m}\log^2 m)}$ is significantly better. Here again, our result is
optimal up to the logarithmic term in the exponent.

\paragraph{The asymmetric problem.} Another area in which our methods as an
aside yield an improvement is that of \emph{asymmetric} Ramsey numbers of an
arbitrary graph $G$ versus $nH$. Generalising the usual Ramsey number, we let
$r(G, H)$ stand for the smallest $N$ such that every $2$-colouring of the edges
of $K_N$ contains either $G$ in the first colour or $H$ in the second. Burr
studied $r(G, nH)$ for its independent interest, but also as an important piece
of determining $r(nH)$. Similarly as in the symmetric case, Buci\'{c} and
Sudakov obtained an exponential improvement over the result of
Burr~\cite{burr1987ramsey} showing that the long term behaviour of $r(G, nH)$
starts already when $n \geq 2^{O(k)}$, where $k = \max\{|G|,|H|\}$.

The asymmetric problem is significantly easier and is a good model for testing
out ideas and developing intuition for the symmetric one. That said, we provide
two improvements over the best known results in this scenario, which, with
almost no additional effort, come out of the techniques used to tackle the
symmetric case. We address this in Section~\ref{sec:asymmetric-problem} in hope
it `eases' the reader into a much more involved argument to follow in
Section~\ref{sec:symmetric-problem}.

In order to state our results, we need to introduce a bit of notation. For two
families of graphs $\cG$, $\cH$, we denote by $r(\cG, \cH)$ the smallest $N$
such that every 2-colouring of the edges of $K_N$ contains a copy of some $G \in
\cG$ in the first colour or a copy of some $H \in \cH$ in the second colour.
For a graph $G$, $\cD(G)$ stands for the class of graphs obtained by removing a
maximal independent set of $G$.

\begin{theorem}
  \label{thm:main-asymmetric}
  Let $G$ be a non-empty and $H$ a connected\footnote{The fact that $H$ is
    connected is necessary, but a similar result can be obtained in case of
  disconnected graphs.} graph with $k = \max\{|G|,|H|\}$. Then, provided $n =
  O(k^{10}r(G,H)^2)$, we have
  \[
    r(G,nH) = n|H| + r(\cD(G),H) - 1.
  \]
\end{theorem}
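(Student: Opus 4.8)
Write $r_0 = r(\cD(G),H)$, so that $N := n|H| + r_0 - 1$ is the claimed value of $r(G,nH)$; I would prove $r(G,nH)\ge N$ and $r(G,nH)\le N$ separately.

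\emph{Lower bound.} The plan is to exhibit a colouring of $K_{N-1}$ with no red $G$ and no blue $nH$; this half will need no lower bound on $n$. Fix an extremal colouring $\chi_0$ of $K_{r_0-1}$ on a vertex set $W$, so $\chi_0$ has no red copy of any $D\in\cD(G)$ and no blue $H$. Take a disjoint set $Z$ with $|Z|=n|H|-1$, colour $W$ by $\chi_0$, colour all edges inside $Z$ blue, and colour all $W$--$Z$ edges red; this uses $(r_0-1)+(n|H|-1)=N-1$ vertices. Since $H$ is connected and every edge leaving $W$ is red, any blue copy of $H$ lies inside $W$ (impossible by $\chi_0$) or inside $Z$, and $|Z|<n|H|$ forbids a blue $nH$. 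For a red copy of $G$ on a set $X$, the edges inside $X\cap Z$ are all blue, so $X\cap Z$ corresponds to an independent set $I_0$ of $G$; extending $I_0$ to a maximal independent set $I_0'$, the red edges inside $X\cap W$ realise $G[V(G)\setminus I_0]$, which contains $G-I_0'\in\cD(G)$ (and $X\cap W\neq\emptyset$ since $G$ has an edge), contradicting $\chi_0$. Note this is exactly the point where connectivity of $H$ is used: it stops a blue copy of $H$ from straddling $W$ and $Z$.

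\emph{Upper bound, setup.} Suppose some colouring of $K_N$ has no red $G$ and no blue $nH$. Let $\cM$ be a maximum family of pairwise disjoint blue copies of $H$, $|\cM|=\nu\le n-1$, and let $R$ be the set of the remaining $|R|=N-\nu|H|$ vertices; by maximality $R$ induces no blue $H$. Having neither a red $G$ nor a blue $H$, $R$ must satisfy $|R|<r(G,H)$, so the deficit $d:=n-\nu$ obeys $1\le d\le r(G,H)$, while $|R|=d|H|+r_0-1\ge r_0$. Hence, since $|R|\ge r(\cD(G),H)$ and $R$ has no blue $H$, $R$ contains a red copy of some $D\in\cD(G)$; fix $D=G-S$ for a maximal independent set $S$ of $G$ and let $\phi$ be the corresponding red embedding of $D$ into $R$.

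\emph{The crux.} To reach a contradiction I would try to grow this red copy of $D$ into a red copy of $G$ by adjoining $|S|\le k$ vertices in the roles of the vertices of $S$. Because $S$ is independent there are no adjacency constraints among the new vertices, so by Hall's theorem a red $G$ is forced as soon as, for each $s\in S$, at least $2k$ vertices of $K_N$ are red-complete to the ($\le k$-element) set $\phi(N_G(s))$ (the slack absorbing the need for distinctness and disjointness from $\phi(V(D))$). If this holds we get a red $G$, a contradiction. Otherwise, for some $s\in S$ all but fewer than $2k$ vertices send a blue edge to the $\le k$-set $\phi(N_G(s))\subseteq R$, so by averaging some vertex $w\in\phi(N_G(s))$ has blue degree $\Omega(N/k)=\Omega(n|H|/k)$. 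The plan for this exceptional case is to use such an enormously blue-popular vertex, together with the connectivity of $H$, to relocate and augment copies of $\cM$ and thereby produce the $d$ missing disjoint blue copies of $H$, again a contradiction. This last step is the main obstacle and is where the polynomial threshold enters: one iterates the high blue-degree extraction, each of the $d=O(r(G,H))$ missing copies being manufactured at the cost of consuming a batch of about $\mathrm{poly}(k)\cdot r(G,H)$ of the copies already built, so an initial supply of $n=O(k^{10}r(G,H)^2)$ copies suffices, with the $\mathrm{poly}(k)$ factor covering the losses from $|S|\le k$, $|\phi(N_G(s))|\le k$, and re-embedding copies of $H$ on at most $k$ vertices. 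Together the two bounds give $r(G,nH)=N$ for all $n$ above this threshold.
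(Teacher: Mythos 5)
Your lower bound is correct and standard, and your setup for the upper bound (maximal blue $H$-tiling $\cM$, remainder $R$ with $|R|=d|H|+r_0-1<r(G,H)$, locating a red $D\in\cD(G)$ in $R$, and the Hall/pigeonhole dichotomy: either extend to a red $G$ or find a vertex $w$ of blue degree $\Omega(N/k)$) is sound and differs structurally from the paper. The paper instead carves out an $(H,r(G,H))$\emph{-absorber} $A$ for a large set $B$ at the very start (via Lemma~\ref{lem:absorbing-lemma-general}), takes a maximal blue tiling of the complement of $A\cup B$, shows the untiled leftover $D'$ must be smaller than $r(\cD(G),H)$ by the same ``extend a red $\cD(G)$ through $B$'' idea, and then tiles $A\cup B'$ by absorbing a sub-$r(G,H)$ remainder into $A$.

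The genuine gap is exactly where you flag it. Finding a single vertex $w$ of blue degree $\Omega(N/k)$ does not, by itself, manufacture the $d$ missing disjoint blue copies of $H$: almost all of $w$'s blue neighbours already lie in $\cM$, so any blue $H$ through $w$ destroys several copies of $\cM$, each of which must then be re-completed from \emph{their} leftover vertices, triggering a recursion you do not control. Your sketch (``each missing copy is manufactured at the cost of consuming a batch of $\mathrm{poly}(k)\cdot r(G,H)$ copies already built'') is not an argument: there is no mechanism guaranteeing that the damaged copies can be repaired disjointly, no invariant bounding how the damage propagates, and no termination criterion. This is precisely the bookkeeping that the absorbing method handles cleanly: the absorber $A$ is a \emph{pre-built}, bounded-size gadget that can swallow any small leftover $R'\subseteq B$ into a perfect $H$-tiling of $A\cup R'$, so one never has to dynamically repair copies. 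Without something playing the role of $A$ (or an explicit switcher/replacement scheme with a controlled potential function), the crux of your argument is unproved, and it is not at all clear that a naive iteration converges within $n=O(k^{10}r(G,H)^2)$. You would need to actually construct such a structure --- which is the content of Lemma~\ref{lem:absorbing-lemma-general} --- before the rest of your outline goes through.
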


This produces a notable improvement over the previously known bound on $n$ in
all cases for which $r(G,H)$ is not exponential in $k$, and is almost optimal
with respect to dependency on $r(G,H)$. If, additionally, $G$ and $H$ are (very)
sparse, in terms of maximum degree, we can do even better.

\begin{theorem}
  \label{thm:main-asymmteric-bounded-deg}
  Let $G$ be a non-empty and $H$ a connected graph, both of maximum degree
  $\Delta$ and with $k = \max\{|G|,|H|\}$. Then, provided $n =
  2^{O(\Delta\log^2\Delta)}k/|H|$, we have
  \[
    r(G,nH) = n|H| + r(\cD(G),H) - 1.
  \]
\end{theorem}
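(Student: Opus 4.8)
I would prove the two inequalities $r(G,nH)\ge n|H|+r(\cD(G),H)-1$ and $r(G,nH)\le n|H|+r(\cD(G),H)-1$ separately. The lower bound holds for any $G$ with at least one edge and any connected $H$, with no condition on degrees, and I would get it from an explicit colouring. Set $N:=n|H|+r(\cD(G),H)-1$, partition the vertices of $K_{N-1}$ into $A$ of size $n|H|-1$ and $B$ of size $r(\cD(G),H)-1$, put on $B$ a colouring with no red member of $\cD(G)$ and no blue $H$, colour $A$ entirely blue, and colour every edge between $A$ and $B$ red. Since $H$ is connected and no blue edge joins $A$ to $B$, every blue copy of $H$ lies inside $A$ or inside $B$; $B$ contains none and $A$ at most $n-1$ disjoint ones, so there is no blue $nH$. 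And if $\phi$ were a red copy of $G$, then $\phi^{-1}(A)$ would be independent in $G$ (edges inside $A$ are blue); extending $\phi^{-1}(A)$ to a maximal independent set $J$ of $G$, the restriction of $\phi$ to $V(G)\setminus J$ would be a red copy of $G-J\in\cD(G)$ inside $B$ --- impossible.

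For the upper bound, fix a $2$-colouring of $K_N$ with no red $G$ and look for a blue $nH$; note first that $r(\cD(G),H)\le r(G,H)\le 2^{O(\Delta\log^2\Delta)}k$ (every $D\in\cD(G)$ is a subgraph of $G$, and the second inequality is the theorem of Graham, R\"{o}dl, and Ruci\'{n}ski applied to $G\cup H$), so all quantities in play are of comparable size. The plan is an absorbing argument. First, using Lemma \ref{lem:efficient-absorbers} together with the no-red-$G$ hypothesis --- the only available source of blue structure --- I would set aside an \emph{efficient absorber}: a blue-rich vertex set $\cA$ of size only $2^{O(\Delta\log^2\Delta)}k$ (so in particular $|\cA|\le n_0|H|\le n|H|$ and it fits) that admits a blue $H$-tiling and retains a near-perfect one after the deletion of the vertices of any suitably bounded family of blue copies of $H$, each using fewer than $|H|$ vertices of $\cA$. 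Then I would greedily grow a family of vertex-disjoint blue copies of $H$: while the still-unused part $R$ of $V\setminus\cA$ has $|R|\ge r(\cD(G),H)$, the colouring on $R$ contains either a blue $H$, which we add, or a red copy of some $D=G-J_0\in\cD(G)$; in the latter case, since no red $G$ exists, this copy cannot be completed to a red $G$ using the vertices of $\cA$, so for some $v\in J_0$ the common red neighbourhood in $\cA$ of $\phi(N_G(v))$ has fewer than $|G|$ vertices, whence --- as $|N_G(v)|\le\Delta$ --- some vertex $\phi(w)$ of the $D$-copy (hence outside $\cA$) has blue degree at least $(|\cA|-|G|)/\Delta$ into $\cA$, and the density-increment machinery of Graham, R\"{o}dl, and Ruci\'{n}ski turns this into a blue copy of $H$ on $\phi(w)$ and fewer than $|H|$ vertices of $\cA$, which we add. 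When this halts, the blue $H$-copies produced, together with a near-perfect blue $H$-tiling of what remains of $\cA$, cover all but at most $(r(\cD(G),H)-1)+(|H|-1)$ of the $N$ vertices, i.e.\ at least $(n-1)|H|+1$ of them, so they number at least $n$.

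The crux, which I expect to be the technical heart, is Lemma \ref{lem:efficient-absorbers}: the absorber must be extracted from the no-red-$G$ hypothesis alone, must survive the repeated nibbling of the red-$D$ branch, and --- decisively --- must have size \emph{linear} in $k$, for it is this that pins the threshold at $n_0=2^{O(\Delta\log^2\Delta)}k/|H|$ rather than something polynomial in $k$; this is where bounded degree is genuinely exploited. A subtler point is that one must run the dichotomy ``blue $H$ or red $D$'' against $r(\cD(G),H)$ rather than the larger $r(G,H)$ --- that is what makes the leftover as small as the theorem demands --- at the cost of having to convert a red copy of $D\in\cD(G)$ into a blue $H$; doing this by a naive vertex-by-vertex blue-clique search would divide the live set by $\Delta$ about $|H|$ times and inflate $n_0$ to be doubly exponential in $\Delta$ or worse, and avoiding this is exactly what the density-increment technique of Graham, R\"{o}dl, and Ruci\'{n}ski accomplishes. (Against the symmetric Theorem \ref{thm:main-symmetric} this argument is comparatively light, since only one colour ever needs controlling --- which is why, as remarked, the asymmetric improvements come almost for free once the absorbing lemma is available.)
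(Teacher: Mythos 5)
Your lower bound construction is correct and is the standard one. The overall upper-bound strategy is also in the spirit of the paper: extract an efficient absorber from the no-red-$G$ hypothesis, run a blue-$H$-vs-red-$\cD(G)$ dichotomy, and finish with the absorbing step. However, there is a concrete gap in how you handle the ``red $D$'' branch, and it is precisely the place where the exponential loss you are trying to avoid would reappear.

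First, the bookkeeping does not close. In your loop you nibble the absorber $\cA$ directly: each red-$D$ iteration turns one vertex of $R$ plus up to $|H|-1$ vertices of $\cA$ into a blue $H$. Even granting that the red branch fires only while $r(\cD(G),H)\le |R|<r(G,H)$ (when $|R|\ge r(G,H)$ a blue $H$ is available), this branch can fire $\Theta(r(G,H))$ times, consuming on the order of $r(G,H)\cdot |H| = 2^{O(\Delta\log\Delta)}k^2$ vertices of $\cA$. But $|\cA| = 2^{O(\Delta\log^2\Delta)}k$, so $\cA$ is exhausted once $k$ is at all large, and the ``suitably bounded family'' you invoke is not in fact bounded as needed. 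Second, the absorber that Lemma~\ref{lem:efficient-absorbers} actually supplies is not a nibble-resistant tiling of $\cA$; it is an $(H,r(G,H))$-absorber $A$ \emph{for a separate, much larger reservoir} $X$ of size $\ge N/2^{200\Delta\log^2\Delta}$ — a set whose size scales with $N$ and hence with $n$, not just with $k$. The paper's proof exploits exactly this two-set structure: it first removes a maximal family of disjoint blue $H$'s from $K-(A\cup X)$, so the remainder $D$ satisfies $|D|\le r(G,H)$ \emph{before} any conversion; it then converts high-blue-degree vertices of $D$ using the reservoir $X$ (not $A$!), and, crucially, batches them in groups of size $|I|$ for a $2$-independent set $I$ of $H$ (so $|I|\ge k/\Delta^3$) via Lemma~\ref{lem:embedding-lemma}. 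This caps the total usage of $X$ at $O(r(G,H)\Delta^3)=2^{O(\Delta\log\Delta)}k$, which is $o(|X|)$, whereas converting one vertex at a time as you propose pays a helper-cost of $|H|-1$ per $D$-vertex and blows up by a factor of $k$. Only after this phase does the paper show $D'$ is red-$\cD(G)$-free (low blue degree into $X$ forces any red member of $\cD(G)$ in $D'$ to extend to a red $G$), giving $|D'|\le r(\cD(G),H)-1$, and only at the very end is the absorber $A$ touched, to soak up the $\le r(G,H)$ leftover vertices of $X$. So: keep $A$ and the reservoir separate, bound $|D|$ before converting, convert via the reservoir, and batch with a $2$-independent set — all four steps are load-bearing, and your sketch is missing them.
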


Note that for bounded degree graphs, $2^{O(\Delta)}k \leq r(G, H) \leq
2^{O(\Delta\log\Delta)}k$. Therefore, the comparison between the two is a bit
intricate, with the latter giving better bounds when the graphs are very sparse,
e.g.\ when $\Delta\log^2\Delta \ll \log k$. Interestingly, the requirement on
$n$ for the long term behaviour in it does not even depend on $k$ when $G$ and
$H$ are of roughly the same order, bringing us closer to the ideal scenario of
$n = O(r(G,H)/|H|)$.

\section{Preliminaries}

For a graph $G$, we let $|G|$ denote its order, i.e.\ number of vertices. Given
a set $X \subseteq V(G)$, we write $G - X$ for $G[V(G) \setminus X]$. To reduce
notational overhead, we often use a graph interchangeably with its vertex set;
in particular, we write $G$ for $V(G)$ in some set-theoretic notation. The
\emph{density} of a set $X$ is $e(X)/\binom{|X|}{2}$, where $e(X)$ stands for
the number of edges contained in $X$, and for two disjoint vertex sets $X$ and
$Y$, we let density be $d(X, Y) = e(X, Y)/(|X||Y|)$, where $e(X, Y)$ is the
number of edges with one endpoint in $X$ and the other in $Y$. A set $U$ is said
to be $(\eps,\gamma)$-dense in a graph $G$ if every $X \subseteq U$, with $|X|
\geq \eps|U|$, has density at least $\gamma$. Similarly, $U$ is said to be
\emph{bi-$(\eps,\gamma)$-dense}, if for all disjoint $X, Y \subseteq U$ of order
$|X|, |Y| \geq \eps|U|$ the density between them satisfies $d(X, Y) \geq
\gamma$. A graph $G$ is bi-$(\eps,\gamma)$-dense if $V(G)$ is.

Throughout, we think of $H$ as a graph on $k \in \N$ vertices on the vertex set
$[k] = \{1,\dotsc,k\}$. If $H$ has maximum degree $\Delta$, then the order of
its largest independent set $\alpha := \alpha(H)$ satisfies $\frac{k}{\Delta+1}
\leq \alpha \leq (1-\frac{1}{\Delta+1})k$. Unless specified differently, we use
$I$ to denote a largest independent set of $H$.

The logarithm function is always in base two, and we omit floors and ceilings
whenever they are not of vital importance. In an attempt to increase
readability, we often abuse the asymptotic notation $f = o(g)$. As an example,
if $g(k) = 2^{O(\Delta\log^2\Delta)}k$, and we write $f = o(g)$, for $f(k) =
2^{c\Delta\log\Delta}k$, we implicitly mean that there exists a choice of a
constant in $2^{O(\cdot)}$ for $g(k)$ (depending on $c$) so that $f$ is much
smaller than $g$. We stress this out several times when it appears in the paper.

In a tight relation to this, we extensively use an upper bound on the Ramsey
number $r(H)$ when $H$ is a $k$-vertex graph with maximum degree $\Delta$:
\begin{equation}\label{eq:ramsey-bound}
  r(H) \leq 2^{84\Delta+2} \Delta^{32\Delta} k \leq 2^{128\Delta\log\Delta} k.
\end{equation}
This comes from \cite[Theorem~2.1]{conlon2012two}. We apply the same upper bound
on the asymmetric $r(G,H)$ when both $G$ and $H$ are graphs of maximum degree
$\Delta$ and $k = \max\{|G|,|H|\}$. (It may be worth pointing out that we can
work with a weaker bound of Graham, R\"{o}dl, and Ruci\'{n}ski as well.)
In our proofs, we avoid being strict with computing the right constant(s) in the
exponent, and instead, for the sake of clarity, prefer loosely adding them up
mainly being driven by the one outlined here.

A key lemma used as a substitute for the Szemeredi's regularity lemma in
\cite{graham2000graphs}, shows that one can go from a dense graph to a (much!)
smaller bi-dense graph with a loss in terms of parameters.

\begin{lemma}[Lemma~1 in \cite{graham2000graphs}]\label{lem:dense-to-bi-dense}
  For all $s, \beta, \eps, \gamma$ such that $0<\beta, \gamma, \eps < 1$, $s
  \geq \log(4/\gamma)$ and $(1-\beta)^{2s} \geq 2/3$, the following holds.
  If $G$ is a $((2\eps)^s\beta^{s-1},\gamma)$-dense graph on $N$ vertices, then
  there exists $U \subseteq V(G)$ with $|U| = \eps^{s-1}\beta^{s-2} N$ such that
  $G[U]$ is bi-$(\eps,\gamma/2)$-dense.
\end{lemma}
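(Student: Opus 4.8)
The plan is to build $U$ greedily by repeatedly extracting a "good half" of the current vertex set. Start with $V_0 = V(G)$, $|V_0| = N$, and suppose inductively that $V_i$ is $((2\eps)^{s-i}\beta^{s-1-i}, \gamma)$-dense with $|V_i| = \eps^i\beta^{i-1} N$ (interpreting the exponents so the base case $i=0$ reads off the hypothesis after absorbing the harmless $\beta^{-1}$; I would just set the bookkeeping straight in the write-up). The key step is: given a set $W$ that is $(\delta,\gamma)$-dense, I want to find $W' \subseteq W$ with $|W'| = \eps|W|$ such that $W'$ is still $(\delta/\eps, \gamma)$-dense \emph{and} every large subset of $W'$ has many neighbours in a complementary large set, i.e.\ $W'$ is closer to being bi-dense. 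The standard trick here is a \textbf{splitting/averaging argument}: consider a uniformly random partition of $W$ into two halves $A \cup B$ (or pick $A$ randomly of size $\eps|W|$ and $B$ disjoint of size $\eps|W|$). For any fixed subset $X \subseteq W$ with $|X| \geq \delta|W|$, the density $d(X \cap A, B)$ concentrates around the density of edges from $X$ into $W$, which is $\geq \gamma$ because $X$ is big in the $(\delta,\gamma)$-dense set $W$. A union bound over the (at most $2^{|W|}$ many) candidate sets $X$, combined with the hypotheses $s \geq \log(4/\gamma)$ and $(1-\beta)^{2s} \geq 2/3$ which are exactly what make the concentration beat the union bound after $s$ iterations, shows that with positive probability a single split works.

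Concretely, I would run this halving step $s-1$ times, each time passing to a subset of relative size $\eps\beta^{?}$ (the $\beta$ factors come from the fact that we discard a $(1-\beta)$-fraction somewhere to gain the bi-density margin—this is why the final density drops from $\gamma$ to $\gamma/2$ rather than staying at $\gamma$). After $s-1$ rounds the set $U$ has size $\eps^{s-1}\beta^{s-2}N$ as claimed, and at each round the "denseness exponent" decreased by one, starting from $(2\eps)^s\beta^{s-1}$, so the arithmetic lines up. The bi-density of the final $U$ follows because the good event we maintained at \emph{every} level guarantees that any two disjoint $X, Y \subseteq U$ with $|X|,|Y| \geq \eps|U|$ were already "seen" as an $(X, \text{rest})$ pair at the level where $U$ was carved out, and the accumulated error in density over all $s$ levels is at most $\gamma/2$ by the choice of $s$ relative to $\gamma$.

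The main obstacle I anticipate is getting the two-sided (bi-density) conclusion rather than the one-sided (density) one: a naive halving controls $d(X, W)$ but not $d(X, Y)$ for an \emph{adversarially chosen} disjoint pair inside the final set. The fix is to be careful that the random split controls, simultaneously for all large $X$, the density from $X$ into the \emph{other half} $B$—and then to note that after the recursion every large disjoint pair in $U$ restricts to large sets on opposite sides of the split at the top level of the sub-recursion producing them. Managing this quantifier interaction across $s$ nested levels, and verifying that the failure probabilities (each like $2^{|W|}\exp(-c\gamma^2|W|\cdot\text{something})$) still sum to less than $1$ given only the stated constraints $s \geq \log(4/\gamma)$ and $(1-\beta)^{2s}\geq 2/3$, is the delicate part; everything else is bookkeeping of the $\eps$ and $\beta$ exponents. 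I would also double-check the edge case where $|W|$ becomes small enough that the union bound is no longer favourable—but the hypothesis forces $s$ to be at most logarithmic in $1/\gamma$ while $N$ can be taken large, so this does not bite.
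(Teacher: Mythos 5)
Your proposal takes a probabilistic route (random halving plus a union bound), whereas the proof in Graham, R\"{o}dl, and Ruci\'{n}ski is entirely deterministic: it is a density-increment argument. One argues by contradiction---if no set of the prescribed size is bi-$(\eps,\gamma/2)$-dense, then at each scale one finds a ``bad'' pair $(X,Y)$ with $|X|=|Y|$ both large and $d(X,Y)<\gamma_i/2$; since $X\cup Y$ is large enough to fall under the $(\cdot,\gamma_i)$-denseness hypothesis, the edge count $e(X)+e(Y)\geq\gamma_i\binom{|X\cup Y|}{2}-\tfrac{\gamma_i}{2}|X||Y|$ forces one of $X$, $Y$ to have strictly higher internal density $\gamma_{i+1}$. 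Recursing on that side $s$ times pushes the density above $1$, a contradiction; $s\geq\log(4/\gamma)$ governs how many doublings are needed, and the $\beta$ and $(1-\beta)^{2s}\geq 2/3$ constraints absorb the discrete $\pm 1$ losses in the binomial coefficients.

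There is a genuine gap in your argument, and I think it is fatal rather than merely delicate. Bi-density of the target set $U$ is a statement about $d(X,Y)$ for disjoint $X,Y\subseteq U$; these are deterministic quantities of the fixed graph $G$, unchanged by whether you select $U$ randomly or not. Your random split controls $d(X\cap A, B)$ where $B=W\setminus A$---that is a density \emph{across} the split, not a density of a pair sitting \emph{inside} the half you keep. Your proposed fix, that ``every large disjoint pair in $U$ restricts to large sets on opposite sides of the split at the top level of the sub-recursion producing them,'' cannot be made to work: $U$ is built by always retaining the same side ($A$) of each successive split, so any two subsets of $U$ lie on the \emph{same} side of every split in the hierarchy, never on opposite sides. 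Independently of this, the union bound over all $\binom{|W|}{\delta|W|}$ candidate sets $X$ of size $\delta|W|$ would need the concentration exponent (roughly $\gamma^2\delta^2|W|$) to beat the entropy exponent (roughly $\delta\log(1/\delta)\,|W|$), which fails for small $\delta$---and $\delta=(2\eps)^s\beta^{s-1}$ is very small here. The hypotheses $s\geq\log(4/\gamma)$ and $(1-\beta)^{2s}\geq 2/3$ do not rescue this: they are arithmetic constraints designed for the deterministic increment bookkeeping, not a concentration-vs.-entropy tradeoff. To repair the proof you would need to abandon the random-subset idea and instead exploit that a bad pair $(X,Y)$ forces an internal density gain in $X$ or $Y$, which is the mechanism the lemma is actually built on.
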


\subsection{Embedding bounded degree graphs}

The results mentioned (or proved) here follow the standard way of embedding a
bounded degree graph $H$ into a larger graph $G$: pick an ordering of the
vertices of $H$ and embed them one by one. Of course, some requirements on $G$
need to be imposed for this to make sense. As a light warm-up, we begin with a
lemma whose proof is a simple exercise in graph theory, and is thus omitted.

\begin{lemma}\label{lem:greedy-embedding}
  Let $H$ be a $k$-vertex graph with maximum degree $\Delta$ and $G$ a graph on
  $N \geq 4k$ vertices with density at least $1-1/(8\Delta)$. Then $G$ contains
  $H$ as a subgraph.
\end{lemma}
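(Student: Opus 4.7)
The plan is to do a standard greedy embedding, preceded by a light cleaning step that removes vertices with atypically high non-degree in the complement $\bar G$. Fix any ordering $v_1,\dotsc,v_k$ of $V(H)$ and aim to choose images $u_1,\dotsc,u_k\in V(G)$ one at a time so that $u_i\notin\{u_1,\dotsc,u_{i-1}\}$ and $u_iu_j\in E(G)$ whenever $v_iv_j\in E(H)$ with $j<i$. At each step there are at most $\Delta$ adjacency constraints to satisfy, since $\Delta(H)\leq\Delta$.

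The cleaning is the heart of the argument. The density assumption gives $e(\bar G)\leq \binom{N}{2}/(8\Delta)$, so $\sum_{v\in V(G)}d_{\bar G}(v)\leq N(N-1)/(8\Delta)$. A Markov-type count then shows that the set $B:=\{v:d_{\bar G}(v)>N/(4\Delta)\}$ satisfies $|B|<(N-1)/2$. Put $U:=V(G)\setminus B$, so $|U|>N/2$ and every $u\in U$ has non-degree at most $N/(4\Delta)$ in $G$. Now run the greedy embedding inside $G[U]$. At step $i$, let $S\subseteq U$ be the set of already embedded neighbours of $v_i$, so $|S|\leq\Delta$. The number of vertices of $U$ that fail to be adjacent in $G$ to some vertex of $S$ is at most $\sum_{s\in S}d_{\bar G}(s)\leq|S|\cdot N/(4\Delta)\leq N/4$, and the already-used images exclude at most $i-1\leq k-1$ further vertices. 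Using $N\geq 4k$, the number of eligible candidates is at least
\[
  |U|-N/4-(k-1)\;>\;N/2-N/4-(k-1)\;=\;N/4-k+1\;\geq\;1,
\]
so a valid choice for $u_i$ always exists, and the induction produces a copy of $H$ in $G$.

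The only point requiring care is that one cannot simply run the greedy embedding directly on $G$: individual vertices could, in principle, have non-degree as large as $\Theta(N)$, in which case even a single adjacency constraint would rule out too many candidates. The cleaning step circumvents this by passing to the (still very large) induced subgraph on vertices that look "typical", after which the naive union bound suffices.
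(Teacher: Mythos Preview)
Your proof is correct. The paper itself omits the proof of this lemma, calling it ``a simple exercise in graph theory'', and your cleaning-then-greedy argument is exactly the kind of routine verification the authors have in mind; the arithmetic checks out, with $N\geq 4k$ giving $N/4-k+1\geq 1$ at each step.
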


The next lemma is our essential embedding tool, which is mainly standard
nowadays, with one addition. We would want to find an embedding of $H$ into $G$
such that every vertex in it acting as an image of some $i \in I$ can be
substituted by plenty other vertices to again form a copy of $H$. A bit more
formally, for a copy $H'$ of $H$ in $G$, we say that a vertex $u \in G \setminus
H'$ is an \emph{alias} of some $v \in H'$ if $u$ together with $H' - v$ forms a
copy of $H$ in $G$ as well (see Figure~\ref{fig:absorbers} below where a similar
concept is used).

\begin{lemma}\label{lem:embedding-lemma}
  Let $H$ be a graph on $k$ vertices with maximum degree $\Delta$. Let $G$ be a
  bi-$(\eps,2\gamma)$-dense graph and $V_i \subseteq V(G)$, $i \in [k]$, a
  collection of (not necessarily disjoint) sets such that
  \[
    \gamma^{\Delta-r}|V_i| - 2r\eps|G| \geq k \qquad \text{for all $r \in
    \{0,\dotsc,\Delta\}$ and $i \in \{1,\dotsc,k\}$}.
  \]
  Given any independent set $I$ of $H$, there exists an embedding of $H$ into
  $G$ such that the image of every $i \in I$ has at least $\gamma^\Delta|V_i|-k$
  aliases.
\end{lemma}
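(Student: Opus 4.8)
The plan is to embed the vertices of $H$ one at a time, in an order that exhausts the non-independent vertices first and treats the independent set $I$ last, maintaining at each step a list of "candidate sets" that shrink in a controlled way. Fix an ordering $v_1, v_2, \dotsc, v_k$ of $V(H)$ such that $[k] \setminus I$ comes before $I$; write $t = k - |I|$ so that $v_{t+1}, \dotsc, v_k$ are exactly the vertices of $I$ (these are pairwise non-adjacent, which is what makes the alias count work at the end). For each $i \in [k]$ I maintain a set $C_i \subseteq V_i$ of admissible images for $v_i$: initially $C_i = V_i$, and once a vertex $v_j$ with $j < i$ adjacent to $v_i$ in $H$ gets embedded to some $u_j \in G$, I replace $C_i$ by $C_i \cap N_G(u_j)$ and also delete from every still-unembedded candidate set the (at most $k$) vertices already used as images. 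The bi-density hypothesis is used through the following one-step observation: if $A, B \subseteq V(G)$ with $|A| \geq \eps|G|$ and $|B| \geq \eps|G|$, then $d(A,B) \geq 2\gamma$, so for all but at most $\eps|G|$ vertices $u \in A$ we have $|N_G(u) \cap B| \geq \gamma|B|$ — hence there is a choice of $u_i \in C_i$ (avoiding the $\leq k$ used vertices) that keeps every later neighbour's candidate set from shrinking by more than a factor $\gamma$, provided the current sizes are all $\geq \eps|G|$.

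The bookkeeping is the routine heart of the argument. When it is $v_i$'s turn, $C_i$ has been intersected with at most $\Delta$ neighbourhoods (one per already-embedded neighbour of $v_i$), each costing a factor $\gamma$, and has had at most $i-1 < k$ used images removed; so if we started from $|V_i|$ we get, for a vertex with $r$ already-embedded neighbours at that moment,
\[
  |C_i| \;\geq\; \gamma^{\Delta - (\Delta - r)}\!\Big(\text{...}\Big) \;\geq\; \gamma^{\Delta - r}|V_i| - (\text{deletions}),
\]
and a clean way to track this is to note that after $j$ embedding steps each surviving $C_i$ has size at least $\gamma^{(\#\text{embedded neighbours of }v_i)}|V_i| - 2j\eps|G| - k$: the $2j\eps|G|$ term absorbs both the $\leq \eps|G|$ "bad" vertices we avoid at each step and a comparable slack from choosing the image inside a large subset, and the $-k$ handles used vertices. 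The hypothesis $\gamma^{\Delta - r}|V_i| - 2r\eps|G| \geq k$ is exactly calibrated so that $|C_i| \geq k > \eps|G|$ is never violated before $v_i$ is embedded (using $r \leq \Delta$ steps and $\eps < 1$ to compare $2j\eps|G|$ with $2\Delta\eps|G|$), which is what both (i) lets the greedy choice go through and (ii) guarantees that when we reach an $i \in I$ its final candidate set has size at least $\gamma^\Delta|V_i| - k$. Since the vertices of $I$ are non-adjacent, at the moment the last vertex of $I$ is placed, the candidate set $C_i$ of each earlier $v_i \in I$ was never cut by a later embedding step at all once it was itself embedded — each $u \in C_i \setminus H'$ other than the chosen image is adjacent to all of $v_i$'s neighbours (which lie outside $I$ and were embedded before), hence is an alias of the image of $v_i$; that yields the claimed $\geq \gamma^\Delta|V_i| - k$ aliases.

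The step I expect to be the main obstacle — or at least the one requiring the most care — is getting the slack terms to line up so that the single hypothesis "$\gamma^{\Delta-r}|V_i| - 2r\eps|G| \geq k$ for all $r$" really does suffice uniformly: one has to be careful that the error accumulated over the whole process (bad vertices avoided at every one of the up-to-$\Delta$ relevant steps for $v_i$, plus the $\leq k$ used images) is genuinely bounded by $2r\eps|G| + k$ and not something larger like $2(i-1)\eps|G|$, which would be too weak. The trick is to charge the $\eps|G|$-loss at a given step only to those candidate sets that actually get intersected with a neighbourhood at that step (i.e.\ to later neighbours of the vertex just embedded), so that $C_i$ only ever pays for the $\leq \Delta$ steps embedding its own neighbours; combined with $\alpha$ and $\Delta$ satisfying $\frac{k}{\Delta+1} \leq \alpha$, one then checks the arithmetic goes through. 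The rest — choosing the ordering, and the final alias count from independence of $I$ — is then essentially immediate.
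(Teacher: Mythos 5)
Your overall strategy matches the paper's exactly: embed greedily, put $I$ last, maintain candidate sets $C_i \subseteq V_i$ that are intersected with a neighbourhood each time a neighbour of $i$ is embedded, use bi-density to bound the number of ``bad'' choices at each step, and read off the aliases from the fact that $I$ is independent (so $C_i$ is never cut again once $i \in I$ has been embedded). The final alias count is also right.

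The place where your write-up genuinely goes astray is the running invariant. You propose to track $|C_i| \geq \gamma^{d_i}|V_i| - 2j\eps|G| - k$ after $j$ steps, correctly observe that a bound accumulating a loss at \emph{every} step is far too weak, and then sketch a fix by ``charging the $\eps|G|$-loss only to candidate sets that actually get intersected.'' But this charging scheme does not correspond to a well-defined operation: the $\leq 2\eps|G|$ bad vertices you avoid at step $j$ are vertices \emph{of $C_j$} (those with too few neighbours in some $C_i$), not vertices of the later neighbours' sets $C_i$, so there is nothing to subtract from $C_i$. The paper's accounting is both simpler and what actually makes the hypothesis tight: the candidate sets satisfy a \emph{purely multiplicative} invariant $|C_i| \geq \gamma^{d_i}|V_i|$ (where $d_i$ is the number of already-embedded $H$-neighbours of $i$), with no additive error carried forward at all. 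The additive terms --- the $\leq k-1$ previously used images and the $\leq 2r\eps|G|$ vertices that are bad for one of the $r$ not-yet-embedded neighbours --- appear only as constraints on the single choice being made at step $i$, i.e.\ in the inequality $\gamma^{\Delta-r}|V_i| - (k-1) - 2r\eps|G| \geq 1$ that certifies a valid image exists. They never enter the invariant, so there is no accumulation problem at all, and no clever charging is needed. Once you see this, the hypothesis is verified directly for each $i$ and the proof closes.

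Two smaller remarks. First, when you invoke bi-density, remember that it requires the two sets to be \emph{disjoint}; the paper handles the possible overlap between the ``bad'' set $X$ and $C_j$ by a small partitioning trick, which is where the factor $2$ in $2\eps|G|$ comes from. Second, your closing appeal to $\tfrac{k}{\Delta+1} \leq \alpha$ is a red herring: this lemma makes no reference to the independence number, and no such bound is used in its proof.
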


\begin{proof}
  Order the vertices $\{1,\dotsc,k\}$ of $H$ by putting the ones in $I$ last.
  Let $S_{i-1} := \{1,\dotsc,i-1\}$ for all $i \in [k]$ and let $N_H(i,S_{i-1})$
  stand for the neighbourhood of $i$ intersecting $S_{i-1}$ in $H$. We embed the
  vertices of $H$ one by one, each $i$ into its corresponding $V_i$ by
  maintaining the following property for all $j > i$: the candidate set $C_j^i$,
  that is the common neighbourhood of image of $N_H(j,S_i)$ in $V_j$, is of
  order at least $\gamma^{|N_H(j,S_i)|}|V_j|$. The fact that this can be done is
  easily verified by induction. For $i = 0$ there is nothing to prove. Assume
  we are embedding $i \geq 1$ next and let $J \subseteq [k]$ be its $r \leq
  \Delta$ not-yet-embedded neighbours in $H$ (if $J$ is empty there is nothing
  to worry about). We need to find a vertex $v$ among the previously unused
  vertices of $C_i^{i-1}$ such that for all $j \in J$, $v$ has at least
  $\gamma|C_j^{i-1}|$ neighbours in $G$ among the candidates for $j$. Note that
  each $C_j^{i-1}$ by induction hypothesis and assumption of the lemma satisfies
  \[
    |C_j^{i-1}| \geq \gamma^{\Delta-1} |V_j| \geq 2\eps|G|.
  \]
  Consider a set $X \subseteq V(G)$ of vertices with fewer than
  $\gamma|C_j^{i-1}|$ neighbours in $C_j^{i-1}$ and assume for contradiction
  that $|X| > 2\eps|G|$. Then $X \cap C_j^{i-1}$ can be partitioned into $Q_1
  \cup Q_2$ such that $\tilde X = X \setminus Q_1$ and $\tilde C_j^{i-1} =
  C_j^{i-1} \setminus Q_2$ are disjoint and both of order at least $\eps|G|$.
  But then the number of edges in $G[\tilde X,\tilde C_j^{i-1}]$ is less than
  $\gamma|\tilde X||C_j^{i-1}| \leq 2\gamma|\tilde C_j^{i-1}||\tilde X|$, which
  is a contradiction with $G$ being bi-$(\eps,2\gamma)$-dense. In conclusion,
  there are at most $2\eps|G|$ vertices in $G$ with fewer than $\gamma|C_j^{i-1}|$
  neighbours in $C_j^{i-1}$. Again by the induction hypothesis and the
  assumption of the lemma, we have that there are at least
  \[
    |C_i^{i-1}| - (k-1) - 2r\eps|G| \geq \gamma^{\Delta-r}|V_i| - (k-1) -
    2r\eps|G| \geq 1,
  \]
  available good choices for mapping $i$ into. Lastly, for any vertex $i \in I$,
  right after the time of its embedding the set of aliases of its image is of
  order at least $\gamma^{\Delta}|V_i| - k$ and does not shrink by embedding any
  $j \in I$ with $j > i$, as $I$ is an independent set.
\end{proof}

A major downside of using the Lemma~\ref{lem:embedding-lemma} is that typically
we are forced to have $\gamma = \Theta(1/\Delta)$ which then requires the order
of the graph $G$ above to be at least $2^{O(\Delta\log\Delta)}$. We briefly
touch upon this in Section~\ref{sec:concluding-remarks}.

\section{The absorbing method}

One cannot overstate how versatile and useful this, in its core simple, method
has recently proven to be in extremal combinatorics and beyond. A list of major
problems tackled by it goes way outside the scope of this work (see,
e.g.~\cite{glock2016existence, kang2021proof, kuhn2013hamilton, kwan2022high,
montgomery2019spanning, montgomery2021proof}). Even though it may not be
immediately obvious how absorbers can be helpful in the current Ramsey setting,
Buci\'{c} and Sudakov set up a scheme that makes use of a `baby version' for
$H$-tiling\footnote{An $H$-tiling of a graph $G$ is a vertex-disjoint collection
of copies of $H$ covering all but at most $|G| \bmod |H|$ vertices of $G$. A
perfect $H$-tiling covers all vertices of $G$.} dense graphs which turned out to
be essential for their results. The most vital part of our strategy is
developing more efficient absorbers, specifically for when $H$ is a bounded
degree graph.

Let $k \in \N$ and let $H$ be a graph on $k$ vertices. For $r \in \N$, an
$(H,r)$-\emph{absorber} for a set $U$ is a graph $A$ with the following
property: for every $R \subseteq U$ of order at most $r$ there is an $H$-tiling
in $A[V(A) \cup R]$.

\subsection{General absorbers}

We first show how to find absorbers in the general setting without any maximum
degree assumptions on $H$. To do so we usually rely on smaller `local'
structures. For a set $X = \{x_1,\dotsc,x_k\}$, we say that a graph $L_X$ is a
\emph{local absorber} if both $L_X$ and $L_X \cup X$ have a perfect $H$-tiling.
A very natural (and easy) way of constructing a local absorber for a set $X$ is
to just combine a copy of $H$ with several \emph{switchers}. A graph $S$ with
two specified vertices $u$ and $v$ is a $uv$-\emph{switcher} (with respect to
$H$), if both $S-u$ and $S-v$ have a perfect $H$-tiling (see
Figure~\ref{fig:absorbers}); note that $u$ and $v$ are aliases of each other.

\begin{figure}[!htbp]
  \captionsetup[subfigure]{textfont=scriptsize}
  \centering
  \begin{subfigure}{.19\textwidth}
    \centering
    \includegraphics[scale=1]{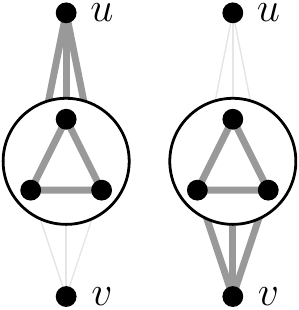}
    \caption{$uv$-switcher}
  \end{subfigure}%
  \hspace{1em}
  \begin{subfigure}{.37\textwidth}
    \centering
    \includegraphics[scale=1]{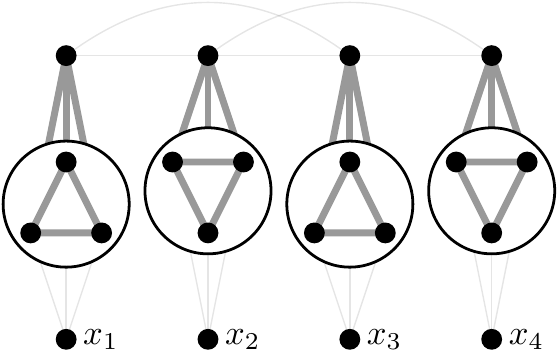}
    \caption{Local $K_4$-absorber: tiling $L_R$}
  \end{subfigure}%
  \hspace{0.5em}
  \begin{subfigure}{.37\textwidth}
    \centering
    \includegraphics[scale=1]{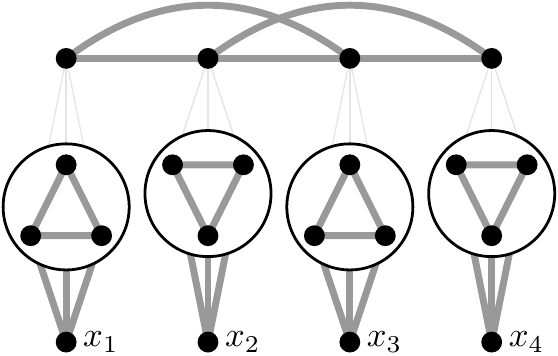}
    \caption{Local $K_4$-absorber: tiling $L_R \cup X$}
  \end{subfigure}%
  \caption{An example of a $uv$-switcher for $K_4$ (left) and its local
  absorber (middle and right)}
  \label{fig:absorbers}
\end{figure}

A much harder task is stitching local absorbers into an $(H,r)$-absorber for
some set $U$. However, the following result of
Montgomery~\cite{montgomery2019spanning} is a widely used guiding hand for
precisely that.

\begin{proposition}\label{prop:resilient-template}
  There is an integer $\ell_0 \in \N$, such that for every $\ell \geq \ell_0$,
  there exists a bipartite graph with maximum degree at most $40$ and vertex
  classes $X \cup Y$ and $Z$, with $|X| = |Y| = 2\ell$, and $|Z| = 3\ell$, such
  that for every $X' \subseteq X$ with $|X'| = \ell$, there is a perfect
  matching between $X' \cup Y$ and $Z$.
\end{proposition}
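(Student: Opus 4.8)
The plan is to construct the template as a disjoint union of small constant-size gadgets ``glued'' together along a sparse expander, so that the matching condition becomes a consequence of Hall's theorem combined with an expansion property. First I would fix a constant $d$ (say $d = 3$) and take an auxiliary $d$-regular bipartite graph $F$ on vertex classes $A \cup B$ with $|A| = |B| = 2\ell$ that is a good \emph{vertex expander}: every $S \subseteq A$ with $|S| \le \ell$ satisfies $|N_F(S)| \ge |S|$, and in fact $|N_F(S)| \ge (1+c)|S|$ for some absolute $c>0$ when $|S|$ is not too close to $\ell$. Such $F$ exists for all large $\ell$ by a standard first-moment/union-bound argument over random $d$-regular bipartite graphs (or by taking a Ramanujan-type construction); this is where the threshold $\ell_0$ comes from. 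Crucially $d$ is an absolute constant, which will keep the final maximum degree bounded by $40$.

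The main step is to turn $F$ into the template $G$ on $X \cup Y$ and $Z$ with $|X| = |Y| = 2\ell$, $|Z| = 3\ell$. I would let $X$ correspond to $A$, put $Y$ as a ``fresh'' copy of $2\ell$ vertices, and build $Z$ by placing, for each vertex $b \in B$, a constant number of vertices of $Z$ (on average $3/2$ per vertex of $B$, so one groups the $2\ell$ vertices of $B$ into $\ell$ pairs, each pair owning $3$ vertices of $Z$). Each such local gadget is a small fixed bipartite graph wired so that: (i) every vertex of $Z$ is adjacent to the two $Y$-vertices matched to that pair plus the $F$-neighbourhood data, and (ii) whichever half of the relevant $X$-vertices is deleted, the gadget still admits a perfect matching of its surviving $X$-part together with its $Y$-part onto its $3$ vertices of $Z$. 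Concretely one checks by hand that a gadget on $\{x_1,x_2,x_3,x_4\}\cup\{y_1,y_2\}$ versus $\{z_1,z_2,z_3\}$ with an appropriate fixed edge set has the property that for any $2$-subset of the $x$'s, that $2$-subset together with $y_1,y_2$ has a perfect matching into $\{z_1,z_2,z_3\}$ — wait, sizes must match, so the gadget is calibrated so that exactly $|Z\text{-part}| = |Y\text{-part}| + |X'\cap\text{gadget}|$; bookkeeping the arithmetic $3\ell = 2\ell + \ell$ forces each gadget to absorb exactly one surviving $X$-vertex on average, and the expander $F$ is what guarantees that the global choice of which gadget absorbs which vertex can always be made.

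To finish, given $X' \subseteq X$ with $|X'| = \ell$, I would verify Hall's condition for the bipartite graph on $(X' \cup Y)$ versus $Z$. For a set $T \subseteq X' \cup Y$: the $Y$-part of $T$ contributes its gadget-neighbours (enough room by the local gadget design), and the $X'$-part of $T$ corresponds to a subset $S \subseteq A$ of size $\le \ell$, whose neighbourhood in $Z$ has size at least (const)$\cdot|N_F(S)| \ge$ (const)$\cdot|S|$ by the expansion of $F$ — chosen large enough to beat the deficit. Combining the two contributions and using that the gadgets overlap in a controlled way gives $|N_G(T) \cap Z| \ge |T|$, so a perfect matching exists. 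The maximum degree bound follows since each vertex lies in $O(d)$ gadgets each of constant size, and one tunes $d$ and the gadget so the total is at most $40$.

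The hard part, and the place the argument really has content, is the simultaneous design of the local gadget and the expansion parameters of $F$: the gadget must be small (to keep degrees $\le 40$) yet flexible enough that \emph{every} balanced deletion $X'$ leaves a perfect matching, and the expander must have expansion strictly above $1$ on \emph{all} scales up to $\ell$ (the boundary case $|S| = \ell$, where expansion can only be $\ge |S|$ and not better, is the delicate one and is exactly why one needs the extra $\ell$ vertices of slack in $|Z|$ versus $|Y|$). Verifying Hall at that boundary scale — i.e.\ that equality $|N_F(S)| = |S|$ does not propagate to a Hall violation in $G$ — is the crux, and is handled by having each gadget carry one ``spare'' $Z$-vertex that is free whenever its $X$-vertices are all deleted.
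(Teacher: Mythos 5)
The paper does not prove Proposition~\ref{prop:resilient-template}: it is imported verbatim from Montgomery~\cite{montgomery2019spanning}, so there is no ``paper's own proof'' to compare against. What can be said is whether your sketch, taken on its own, would constitute a proof of the statement; at present it would not.

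The high-level philosophy you describe --- a sparse bipartite expander used to route surviving $X$-vertices into constant-size local gadgets, finished off by a Hall-type argument --- is indeed in the spirit of Montgomery's construction. But your sketch has gaps at exactly the points where the content of the lemma lives. First, the gadget design never stabilises: you propose a gadget on $\{x_1,\dotsc,x_4\}\cup\{y_1,y_2\}$ against $\{z_1,z_2,z_3\}$, immediately observe that four vertices cannot be perfectly matched into three, and then retreat to saying the gadget is ``calibrated so that $|Z\text{-part}|=|Y\text{-part}|+|X'\cap\text{gadget}|$.'' That calibration is a \emph{conclusion} you would need to establish, not a design choice you get to impose: for an arbitrary $X'\subseteq X$ of size $\ell$, the intersection $|X'\cap\text{gadget}|$ varies from gadget to gadget, and your local perfect-matching property must survive every such unbalanced split. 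You signal this with ``exactly one surviving $X$-vertex \emph{on average},'' but averages do not give Hall's condition; what is needed is a global re-routing argument, and that is precisely where the expander must earn its keep. Second, you never actually verify Hall's condition: you announce that ``combining the two contributions \ldots\ gives $|N_G(T)\cap Z|\ge|T|$,'' which is the entire theorem, not a step in its proof. In particular the boundary case you yourself flag ($|S|$ near $\ell$, where one cannot expect expansion strictly above~$1$) is left to a one-line appeal to a ``spare'' $Z$-vertex per gadget --- but a spare $Z$-vertex that is adjacent only to vertices inside its own gadget does nothing to repair a Hall deficiency caused by a \emph{different} part of $X'$, so the appeal is not sound as stated. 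Finally, the degree bound of $40$ is waved at (``one tunes $d$ and the gadget'') without any accounting; this is the sort of bookkeeping that, in the actual construction, dictates the constants and cannot be deferred.

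To turn this into a proof you would need to (i) write down one explicit gadget and check its matching property for \emph{every} possible intersection size $|X'\cap\text{gadget}|$, (ii) specify the expansion threshold for $F$ (something of the form $|N_F(S)|\ge 2|S|$ for all $|S|\le c\ell$, say) and prove a random $d$-regular bipartite graph has it, and (iii) run Hall's condition through both regimes (small $T$ handled by expansion, large $T$ handled by the complementary count of excluded $Z$-vertices), verifying that the two regimes overlap. None of this is present, so the sketch is a plausible roadmap rather than a proof.
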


The construction of our absorbers mainly relies on the celebrated dependent
random choice~\cite{fox2011dependent}, which enables us to find for any $u, v$ a
$uv$-switcher using their common neighbourhood.

\begin{lemma}\label{lem:dependent-random-choice}
  Let $a, d, m, n, r$ be positive integers. Let $G=(V, E)$ be a graph with
  $|V|=n$ vertices and average degree $d=2|E(G)| / n$. If there is a positive
  integer $t$ such that
  \[
    \frac{d^t}{n^{t-1}} - \binom{n}{r} \Big(\frac{m}{n}\Big)^t \geq a
  \]
  then $G$ contains a subset $U$ of at least a vertices such that every $r$
  vertices in $U$ have at least $m$ common neighbours.
\end{lemma}

With the following lemma at hand the proof of
Theorem~\ref{thm:asymmetric-general-ub} becomes all but a formality. In general,
the way we apply the absorbing lemma(s) is to consider a $2$-coloured complete
graph that has no copy of $G$ in, say, blue, which then implies an absorber for
$H$ in red.

\begin{lemma}\label{lem:absorbing-lemma-general}
  Let $G$ and $H$ be two graphs and $k = \max\{|G|, |H|\}$. Let $K$ be a graph
  of order $N \geq 2^{32}k^{10}r(G, H)^2$ and whose complement is $G$-free.
  Then, there exists $U \subseteq V(K)$ of order $N/(2k^3)$ and an $(H,
  r(G,H))$-absorber for $U$ in $K$ of order at most $\sqrt{N}/k$.
\end{lemma}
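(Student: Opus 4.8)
The plan is to follow the standard absorber-construction pipeline, combining dependent random choice (Lemma~\ref{lem:dependent-random-choice}) to manufacture switchers, local absorbers built from switchers plus one copy of $H$, and Montgomery's resilient bipartite template (Proposition~\ref{prop:resilient-template}) to stitch the local absorbers together into a single $(H, r(G,H))$-absorber. Write $r := r(G,H)$ and recall $r \le k^2$ roughly (certainly $r \le \binom{N}{1}$ and $N \gg k^{10}r^2$). The key structural input is that since the complement of $K$ is $G$-free, by definition of $r(G,H)$ every subset of $K$ on at least $r$ vertices contains a (red, i.e.\ $K$-) copy of $H$; more importantly, inside any large enough vertex set of $K$ we can find copies of $H$ and switchers freely.

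First I would apply Lemma~\ref{lem:dependent-random-choice} to $K$ itself: $K$ is very dense (its complement is $G$-free hence has at most $r$ vertices spanning any "clean" region, so $K$ has density close to $1$ on most of its vertices, and certainly average degree $d \ge N/2$). With $r$ as the parameter there, $t$ a suitable constant multiple of $\log k$, and $m$ on the order of $r \cdot (\text{something})$, I get a set $W \subseteq V(K)$ with $|W| \ge N/(2k^3)$ (this will be the $U$ in the statement, possibly after slight shrinking) such that every $r$ vertices of $W$ have at least $m \ge r\cdot|H|$ common neighbours in $K$; here one checks the arithmetic condition $d^t/n^{t-1} - \binom{n}{r}(m/n)^t \ge |W|$ holds given $N \ge 2^{32}k^{10}r^2$. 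Next, for each vertex $w \in W$ I need many switchers having $w$ as one of their two special vertices: given $w$ and a candidate alias $w'$, their common neighbourhood is large (at least a polynomial fraction of $N$, hence $\ge r(H) \cdot \text{const}$), so by the Ramsey property it contains a copy of $H$, and by embedding $H$ with aliases (as in Lemma~\ref{lem:embedding-lemma}, or directly, since the common neighbourhood is big) we can realise a $ww'$-switcher of bounded size $O(k)$. Bundling a copy of $H$ on a target set $X$ with $|X|$ disjoint such switchers yields a local absorber $L_X$ of size $O(k^2)$ for any $|X|$-subset $X$ of $W$.

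Then I would invoke Proposition~\ref{prop:resilient-template} with $\ell \approx |W|/(6k)$: this gives a bipartite template graph with parts $X\cup Y$ (of size $4\ell$) and $Z$ (of size $3\ell$), maximum degree $40$, with the resilience property that for any half $X' \subseteq X$ there is a perfect matching of $X'\cup Y$ into $Z$. I embed $X\cup Y$ as disjoint $|H|$-sets ("blocks") inside $W$ and realise each template edge by a local absorber sitting on the union of the two incident blocks together with a bounded-size switcher gadget drawn from common neighbourhoods in $K$ (existence guaranteed exactly as above). The total absorber $A$ then has order $O(\ell \cdot k \cdot \text{const}) = O(|W|\cdot\text{const})$, which is $O(N/k^3)$; to hit the claimed bound $\sqrt{N}/k$ I would instead take $|W|$ — and hence $\ell$ — of order only $\sqrt{N}/k^{O(1)}$ when constructing the template (the set $U$ in the statement can be the full $N/(2k^3)$-sized output of dependent random choice, while the template and its absorber use only a tiny $\sqrt N$-sized sub-portion of it, with the blocks placed on a separate $\sqrt N$-sized clean piece of $K$ disjoint from, but fully joined to, $U$). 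One checks that $R\subseteq U$ with $|R|\le r$ can be absorbed: assign the $\le r$ vertices of $R$ to distinct blocks among the $X$-side of the template (possible since $\ell \gg r$), using up to $\ell$ of the $X$-blocks — wait, we need $|R|$ blocks "switched out", so set $X'$ to be the $X$-blocks not used by $R$; the matching of $X'\cup Y$ into $Z$ then tells us which local absorbers to tile in the "$X$-covered" mode and which in the "$X$-uncovered" mode, and a short case analysis of the two tilings of each local absorber shows $A[V(A)\cup R]$ has a perfect $H$-tiling.

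The main obstacle, and where I would spend the most care, is the size accounting: naively each local absorber costs $\Theta(k^2)$ vertices and there are $\Theta(\ell)$ of them, so one must choose $\ell$ small — around $\sqrt N / \poly(k)$ — yet still $\gg r$, which forces $N \gg \poly(k)\, r^2$, precisely the hypothesis; and one must ensure that with such a small template the switchers needed for each template edge still exist, i.e.\ that the relevant common neighbourhoods inside $K$ have size at least $r(H)$ — this is fine because those neighbourhoods are polynomial in $N$, not in $\ell$. The secondary subtlety is making $U$ genuinely large ($N/(2k^3)$, not $\sqrt N$) while keeping $A$ of size $\sqrt N/k$: the resolution is that $U$ itself does not go into $A$; rather $A$'s job is to absorb arbitrary small subsets of $U$, so $A$'s blocks are placed on auxiliary clean vertices fully connected (in $K$) to all of $U$, and each vertex of $U$, when it needs absorbing, plays the role of an alias inside one switcher gadget whose other vertices live in $A$. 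Verifying this "alias-into-$A$" mechanism rigorously — that one vertex of $U$ together with $A$'s blocks still admits the needed $H$-copies — is the one place the bounded common-neighbourhood estimates from dependent random choice must be applied with the right quantifier order.
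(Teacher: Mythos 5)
Your proposal follows the paper's overall architecture (dependent random choice $\to$ switchers and local absorbers $\to$ Montgomery's resilient template), but several of the specific moves you make do not go through as written and diverge from the paper in ways that matter.

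First, your density claim for $K$ is incorrect. From the complement of $K$ being $G$-free, Tur\'an's theorem gives only $e(K)\geq N^2/(2k)$, so the average degree is $d\geq N/k$ --- not $d\geq N/2$, and the density is not ``close to $1$'' but merely $\Omega(1/k)$. This is not fatal but it feeds into the next issue. More seriously, you apply Lemma~\ref{lem:dependent-random-choice} with the parameter $r$ there equal to $r(G,H)$ and $t\approx\log k$, asking that every $r(G,H)$ vertices share many common neighbours. This does not satisfy the arithmetic hypothesis: $\binom{N}{r}$ is roughly $N^{r}$, which for $r=r(G,H)$ (a quantity that can be exponential in $k$) vastly dominates $d^t/N^{t-1}=N(d/N)^t\leq N$ when $t$ is only $O(\log k)$. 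Your throwaway remark ``$r\le k^2$ roughly'' is false in general. The paper sidesteps this entirely by taking $r=2$ and $t=2$: only pairs of vertices need rich common neighbourhoods, because switchers are built between the \emph{pair} $u_i,v_i$, not among $r(G,H)$-tuples. With $r=2$, $t=2$, $m=\sqrt N/k$, the computation closes given $N\geq 2^{32}k^{10}r(G,H)^2$, and then a short degree-cleaning step yields the final $U$ of order $N/(2k^3)$.

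Second, the absorption mechanism you sketch does not match how the pieces actually fit, and I think as described it would fail. You alternate between (i) placing the template's $X$-side as disjoint $|H|$-size blocks external to $U$ and (ii) assigning the $\leq r$ vertices of $R$ directly to distinct $X$-blocks and ``switching them out''. In the paper, $X$ is a subset of $U$ consisting of \emph{individual vertices}, chosen so every $v\in U$ has many neighbours in $X$; the $Z_i$ are $(|H|-1)$-size chunks disjoint from $X$. A set $R$ is absorbed by first greedily covering each $v\in R$ by a copy of $H$ inside $N_K(v)\cap X$, then packing more copies of $H$ into $X$ until exactly $\ell$ (give or take $|H|-1$) vertices remain, and only then invoking the template's flexible matching for that leftover $\ell$-subset $X'$. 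Your version --- where $R$'s vertices themselves occupy $X$-blocks and $X'$ is ``the $X$-blocks not used by $R$'' --- reverses the logic and leaves unexplained how an arbitrary vertex of $U$ (not one of the prechosen template vertices) can be substituted into a block, since your blocks are placed on ``auxiliary clean vertices fully connected to $U$'' whose existence you have not established (and which $K$, having density only $\Omega(1/k)$, need not contain). So the quantifier order you flag as the delicate point is indeed where the argument breaks; the paper's resolution (cover $R$ by copies of $H$ landing in $X$, then do inventory accounting on $|X|$) is genuinely different and you have not reproduced it.
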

\begin{proof}
  As the complement of $K$ is $G$-free, by Tur\'{a}n's theorem $e(K) \geq
  N^2/(2k)$, and so the average degree in $K$ is $d \geq N/k$. Therefore,
  setting $m = \sqrt{N}/k$,
  \[
    \frac{d^2}{N} - \binom{N}{2} \Big(\frac{m}{N}\Big)^2 \geq \frac{N}{k^2} -
    \frac{N}{2k^2} = \frac{N}{2k^2}.
  \]
  Applying Lemma~\ref{lem:dependent-random-choice} with $a = N/(2k^2)$, $r = 2$,
  and $t = 2$ to $K$, returns a set $V \subseteq V(K)$ of order $N/(2k^2)$ in
  which every two vertices have at least $m$ common neighbours; note $m \geq k^4
  r(G,H)$.

  As long as there is a vertex $v \in V$ with degree at most $|V|/k - 1$ in $V$
  we mark it and delete it together with its neighbourhood from $V$ (with a
  slight abuse of notation we still call the remaining set $V$). This process
  has to stop after at most $k-1$ iterations, as otherwise the marked vertices
  form an independent set of order $k$---a contradiction with the complement of
  $K$ being $G$-free. Therefore, we construct a set $U_0$ of order $|U_0| \geq
  |V|/k$ with minimum degree at least $|U_0|/k$. It is not too difficult to see
  that there is $U \subseteq U_0$ of order exactly $|V|/k \geq N/(2k^3)$ with
  minimum degree at least $|U|/(2k) \geq N/(4k^4)$. As the expected degree of
  any $v \in U_0$ into a uniformly random $U \subseteq U_0$ of precisely that
  size is at least $|U|/k \geq |V|/k^2$, Chernoff's inequality for
  hypergeometric distributed random variables (see,
  e.g.~\cite[Theorem~2.10]{janson2011random}) and the union bound over all
  vertices in $U_0$, show that with positive probability $U$ satisfies the
  desired property.

  Let $p = 1/(2^7\sqrt{N})$ and let $X \subseteq U$ be a set of order $2\ell =
  2|U|p$, such that every $v \in U$ has degree into $X$ at least $|U|p/k \geq
  \sqrt{N}/(2^9k^4)$. By choosing $X$ uniformly at random, an almost analogous
  application of Chernoff's inequality and the union bound over all vertices in
  $U$, show that with positive probability $X$ satisfies the desired property.

  Having these preparatory steps done, we construct an $(H,r(G,H))$-absorber $A$
  for $U$. We do so iteratively, while keeping its order at most $m/2$
  throughout the process. Let $Y$ and $Z$ be disjoint sets belonging to $V
  \setminus X$, where $|Y| = 2\ell$ and $|Z| = 3\ell(|H|-1)$, and add $X$, $Y$,
  and $Z$ to $A$. Arbitrarily partition $Z$ into $Z_1, \dotsc, Z_{3\ell}$ of
  equal order, and consider the template graph $T$ given by
  Proposition~\ref{prop:resilient-template} with vertex classes $X \cup Y$ and
  $\{Z_1,\dotsc,Z_{3\ell}\}$. For every $e = (v, Z_i)$ in $T$, with $v \in X
  \cup Y$ and $i \in [3\ell]$, pick a copy $H_e$ of $H$ in $K[V] - A$ which
  exists as long as $|V \setminus A| \geq r(G,H)$. Label the vertices of $H_e$
  by $u_1,\dotsc,u_k$ and label $\{v\} \cup Z_i$ by $v_1,\dotsc,v_k$. Recall, by
  the choice of $V$, every pair of vertices in it has at least $m$ common
  neighbours, and in particular have at least $m/2 \geq k^4 \cdot r(G, H)/2$
  common neighbours \emph{outside of} $A$. For every pair $\{u_i, v_i\}$, we
  hence find a $u_iv_i$-switcher by, e.g.\ taking a copy of $H$ in their common
  neighbourhood and removing a vertex. After repeating this for every $i \in
  [3\ell]$, we add the newly found local absorber $L_e$ for $\{v\} \cup Z_i$
  (see Figure~\ref{fig:absorbers}) to $A$ and iterate the procedure from the
  beginning. To see that the order of $A$ is bounded as desired recall that $T$
  is a bipartite graph with $3\ell$ vertices in one vertex class and maximum
  degree $40$ and every local absorber is of order $k^2$, hence
  \[
    120\ell \cdot k^2 \leq 120|U|p \cdot k^2 = \frac{60 Np}{k^3} \cdot k^2 <
    \frac{\sqrt{N}}{2k} = \frac{m}{2}.
  \]

  It remains to verify that $A$ is indeed an absorber. Consider any $R \subseteq
  U$ of order $|R| \leq r(G, H)$. Recall, by the choice of $X$, every $v \in R$
  has at least $\sqrt{N}/(2^9 k^4) \geq r(G,H) + |R|(k-1)$ neighbours in $X$ so
  we can, for each $v \in R$ iteratively, find a copy of $H$ containing $v$ in
  $X$ that is disjoint from the previously found ones, until $R$ is covered. We
  proceed by finding copies of $H$ in the remainder of $X$ for as long as it has
  at least $\ell$ and at most $\ell + |H| - 1$ vertices left. Indeed, this can
  be done as $\ell \geq r(G,H)$.

  Finally, choose any $\ell$ vertices among the remaining ones in $X$, with a
  leftover of at most $|H|-1$. By looking at the corresponding matching in the
  template graph $T$, for every edge in the matching $e = (v,Z_i)$, we use the
  $H$-tiling of $L_e \cup \{v\} \cup Z_i$ in $K$ and for every edge $f$ not in
  the matching we use the $H$-tiling of $L_f$ in $K$ (see again
  Figure~\ref{fig:absorbers}). This completes the proof.
\end{proof}

\subsection{Efficient absorbers for bounded degree graphs}

Let $G$ and $H$ now be $k$-vertex graphs with maximum degree $\Delta$ and $k =
\max\{|G|,|H|\}$. Note that, the general absorbing lemma from the previous
section requires the host graph to be of the order $O(k^{10}r(G,H)^2)$. In fact,
any `naive' way of building an absorber, that is finding local absorbers and
stitching them in a clever way, would impose $N \geq k^2 \cdot r(G,H)$. If we
are to recover the bounds promised in Theorem~\ref{thm:main-symmetric} and
Theorem~\ref{thm:main-asymmteric-bounded-deg}, this is infeasible. In order to
circumvent this, we need to develop more efficient (smaller) absorbers. This is
the most novel part of our argument.

\begin{lemma}\label{lem:efficient-absorbers}
  Let $G$ and $H$ be two graphs with maximum degree $\Delta$ and $k = \max\{|G|,
  |H|\}$. Let $K$ be a graph of order $N \geq 2^{512\Delta\log^2\Delta}k$ whose
  complement is $G$-free. Then, there exists $X \subseteq V(K)$ of order at
  least $N/2^{200\Delta\log^2\Delta}$ which is
  bi-$(2^{-10\Delta-11}\Delta^{-2\Delta-4}, 2^{-4} \Delta^{-1})$-dense
  % $(32\Delta)^{-(2\Delta)}2^{-11}\Delta^{-4},(16\Delta)^{-1})$-dense
  and an $(H,r(G,H))$-absorber for $X$ in $K$ of order at most
  $2^{200\Delta\log^2\Delta} k$.
\end{lemma}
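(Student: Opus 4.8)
The plan is to mimic the structure of the general absorbing lemma (Lemma~\ref{lem:absorbing-lemma-general}), but to replace every step that costs a polynomial factor in $k$ with a density-based substitute, so that the final host-graph requirement is $2^{O(\Delta\log^2\Delta)}k$ rather than $\mathrm{poly}(k)\cdot r(G,H)^2$. First, since the complement of $K$ is $G$-free, Tur\'an's theorem gives that $K$ has density at least $1-1/(8\Delta)$ (for $|G|\le k$), so $K$ is trivially $((\text{tiny})^s(\text{tiny})^{s-1},\gamma)$-dense with $\gamma$ close to $1$; applying Lemma~\ref{lem:dense-to-bi-dense} with an appropriately chosen constant number of iterations $s=O(\log\Delta)$ and $\gamma=\Theta(1)$ produces a set $X_0\subseteq V(K)$ with $|X_0|\ge N/2^{O(\Delta\log\Delta)}$ that is bi-$(\eps,\gamma')$-dense with $\eps$ and $\gamma'$ roughly in the promised ranges (one tunes $s$, $\beta$, $\eps$ to land on $\gamma'=2^{-4}\Delta^{-1}$ and $\eps=2^{-10\Delta-11}\Delta^{-2\Delta-4}$; note $\gamma'$ being forced to be $\Theta(1/\Delta)$ is exactly the phenomenon flagged after Lemma~\ref{lem:embedding-lemma}, and is why we cannot push below $\Delta\log^2\Delta$ here). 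This $X_0$, or a large bi-dense subset of it, will be the set $X$ in the statement; crucially, bi-$(\eps,2\gamma)$-density of a host set lets us apply Lemma~\ref{lem:embedding-lemma} to find copies of $H$ (and switchers) using only $2^{O(\Delta\log\Delta)}$ vertices rather than $r(G,H)$-many in a Ramsey-theoretic black box, and to get many \emph{aliases} for the free (independent-set) vertices for free.

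Next I would build the absorber on top of $X$ using Montgomery's resilient template (Proposition~\ref{prop:resilient-template}). As before, reserve inside $X$ (or inside $K$) disjoint sets: a part $X'$ of size $2\ell$ that will be the template's $X\cup Y$ side, and a part to be subdivided into the $3\ell$ blocks $Z_i$, each block of size $|H|-1$. Here $\ell$ only needs to be $2^{O(\Delta\log^2\Delta)}$ (large enough that all the embedding/Chernoff steps go through and that $\ell_0\le\ell$), \emph{not} $\mathrm{poly}(k)\cdot r(G,H)$, because every gadget is now built by embedding, not by invoking $r(G,H)$. For each template edge $e=(v,Z_i)$, I need a local absorber for $\{v\}\cup Z_i$: take a copy $H_e$ of $H$ disjoint from everything so far (found via Lemma~\ref{lem:embedding-lemma} inside a suitable candidate set of size $2^{O(\Delta\log\Delta)}$), pair up its $k$ vertices with the $k$ vertices of $\{v\}\cup Z_i$, and for each pair $\{u_i,v_i\}$ construct a $u_iv_i$-switcher. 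The switcher is again a copy of $H$ in the common candidate set of $u_i$ and $v_i$ with one vertex deleted — and this is where bi-density plus the embedding lemma replace dependent random choice: we must ensure that for the relevant pairs the common "candidate neighbourhood" is large enough ($\ge 2^{O(\Delta\log\Delta)}$) to host such a copy. The whole absorber then has order $O(\ell k^2)=2^{O(\Delta\log^2\Delta)}k$, matching the claim, provided we keep a running pool of unused vertices of size at least $2^{O(\Delta\log\Delta)}$ at every step, which is guaranteed since $N\ge2^{512\Delta\log^2\Delta}k$ dwarfs the absorber size.

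The absorbing property is verified exactly as in Lemma~\ref{lem:absorbing-lemma-general}: given $R\subseteq X$ with $|R|\le r(G,H)$, first cover each $r\in R$ by a copy of $H$ inside $X$ (using that $R$ sits in a bi-dense set and that each $r$ has a large candidate set — this needs $|X|\gtrsim r(G,H)\cdot k$, which holds since $|X|\ge N/2^{200\Delta\log^2\Delta}\ge r(G,H)\cdot k$ by the choice of $N$ and the bound~\eqref{eq:ramsey-bound}); then tile down the leftover of $X'$ to exactly $\ell$ vertices by repeatedly pulling out copies of $H$; then invoke the template's perfect matching on the surviving $\ell$-subset, using the tiling of $L_e\cup\{v\}\cup Z_i$ on matched edges and the tiling of $L_f$ on unmatched edges. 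I expect the main obstacle to be the bookkeeping of parameters: one has to choose $s,\beta,\eps,\gamma$ in Lemma~\ref{lem:dense-to-bi-dense} so that the output density and sparsity parameters are \emph{simultaneously} at least $2^{-4}\Delta^{-1}$ and at most $2^{-10\Delta-11}\Delta^{-2\Delta-4}$ respectively, while also satisfying the side conditions $s\ge\log(4/\gamma)$ and $(1-\beta)^{2s}\ge 2/3$, \emph{and} so that every subsequent application of the embedding lemma (Lemma~\ref{lem:embedding-lemma}) has its hypothesis $\gamma^{\Delta-r}|V_i|-2r\eps|G|\ge k$ met with the candidate sets that actually arise — in particular the common-neighbourhood sets used for switchers shrink by a $\gamma^{O(\Delta)}$ factor, which is where the second $\log\Delta$ in the exponent is spent. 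Everything else (the Chernoff arguments for choosing $X'$ with good degrees, the greedy deletion to guarantee minimum degree, the size accounting for $A$) is routine and essentially identical to the general case, just with $r(G,H)$ replaced by $2^{O(\Delta\log\Delta)}$ wherever a "find a copy of $H$" step occurs.
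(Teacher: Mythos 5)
Your proposal tries to re-run the general-absorber construction (Lemma~\ref{lem:absorbing-lemma-general}) with the Montgomery template, replacing dependent random choice by bi-density and the embedding lemma. The paper's proof of Lemma~\ref{lem:efficient-absorbers} does \emph{not} use Proposition~\ref{prop:resilient-template} at all; it uses a completely different mechanism built around a $2$-independent set of $H$, the notion of aliases, and a ``rolling'' absorption argument (Claim~\ref{cl:efficient-mini-absorber}). That difference is not cosmetic --- your plan runs into two genuine obstructions.

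First, to place a $u_iv_i$-switcher you need a copy of $H$ sitting inside the \emph{common} neighbourhood of the specific pair $u_i,v_i$. Bi-$(\eps,\gamma)$-density with $\gamma=\Theta(1/\Delta)$ does not give this: two vertices of degree about $\gamma|W|$ in a bi-dense set can perfectly well have an empty common neighbourhood when $\gamma<1/2$, and the embedding lemma only produces embeddings when the \emph{candidate sets} $V_i$ are already large, it does not manufacture a large common neighbourhood for an adversarially chosen pair. In Lemma~\ref{lem:absorbing-lemma-general} this pairwise guarantee is exactly what dependent random choice buys, at the cost of a $\binom{N}{2}$ term, which is precisely the polynomial overhead you are trying to avoid. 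Dropping it leaves the switcher-construction step unfounded. (The paper sidesteps this by never building switchers for arbitrary pairs: it first embeds many disjoint copies of $H$ with the embedding lemma, so that every image of a $2$-independent vertex comes with many aliases for free, and all later exchanges only ever swap a vertex for one of its own pre-certified aliases.)

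Second, even granting switchers, the template forces the absorber to be far larger than claimed. When absorbing $R\subseteq X$ with $|R|\le r(G,H)$ via the template, each $v\in R$ is completed to a copy of $H$ using $k-1$ vertices of the template's $X\cup Y$ side, and at the end exactly $\ell$ vertices there must survive to feed the matching; so one needs $\ell\gtrsim r(G,H)\cdot(k-1)$. Since the template has $\Theta(\ell)$ edges and each local absorber has order $\Theta(k^2)$, the resulting absorber has order $\Omega(\ell k^2)=\Omega(r(G,H)k^3)\ge 2^{\Omega(\Delta)}k^4$, which is polynomially larger than the claimed $2^{200\Delta\log^2\Delta}k$. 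Your remark that ``$\ell$ only needs to be $2^{O(\Delta\log^2\Delta)}$'' is therefore incorrect for this scheme. The paper's construction is more economical precisely because each absorbed vertex is placed as the image of a $2$-independent vertex of $H$, costing only about $(k-|I|)/|I|\le\Delta^3$ absorber vertices on average, and because the alias-rolling argument lets $W_I^\star$ replenish itself, so the absorber can indeed have order $2^{O(\Delta\log\Delta)}k$.
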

\begin{proof}
  We first introduce some parameters. Let
  \[
    \gamma = \frac{1}{32\Delta}, \qquad \eps =
    \frac{\gamma^{2\Delta}}{2^{11}\Delta^4}, \qquad s = \log(32\Delta), \qquad
    \beta = \frac{1}{8s},
  \]
  and note that $(1-\beta)^{2s} \geq 2/3$. Next, let
  \[
    m = 2^{90\Delta+12} \Delta^{33\Delta+6} k
  \]
  and so $\gamma^\Delta m \geq r(G,H) \cdot \Delta^6 2^8$ with room to spare
  (see \eqref{eq:ramsey-bound}). Lastly, take $m_0 = (\gamma/4) \cdot
  \eps^{s-1}\beta^{s-2} N$. The choice of parameters is set up in a way that, if
  we find a set of order $m_0 \geq 2^{-200\Delta\log^2\Delta} N$ which is
  bi-$(\eps,2\gamma)$-dense and an absorber for it of order at most $m \leq
  2^{200\Delta\log^2\Delta} k$, we are done.

  Since the complement of $K$ is $G$-free, it follows from
  Lemma~\ref{lem:greedy-embedding} that there are at least $N/2$ vertices with
  degree at least $\gamma N$ in $K$, and moreover, any $2^s\eps\beta \cdot 3m
  \geq 4k$ of these vertices induce a set of density at least $4\gamma$ (here we
  used $N/2 \geq 2^s\eps\beta \cdot 3m$). Therefore, by
  Lemma~\ref{lem:dense-to-bi-dense} applied to a subset of these high degree
  vertices of order $\eps^{-s+1}\beta^{-s+2} \cdot 3m$, there is a $V \subseteq
  V(K)$ of order $3m$ which is bi-$(\eps,2\gamma)$-dense and whose every vertex
  has degree at least $\gamma N$ in $K$.

  Consider a largest $2$-independent set of $H$, that is a set of vertices in
  which no two are within distance two in $H$, and denote it by $I$. It is an
  easy observation that $|I| \geq k/\Delta^3$. We first iteratively build a
  collection of disjoint copies of $H$ in $K[V]$ whose vertex set we move to a
  set $Q$. Initially $Q$ is empty, and for as long as $V \setminus Q$ is of
  order at least $2m + k$, we find a new copy of $H$ by applying
  Lemma~\ref{lem:embedding-lemma} to $K[V]$ with $I$ (also an independent set)
  and all $V_i = V \setminus Q$, and move its vertices to $Q$. In the end, $|Q|
  = m$ and the plan is for it to hold our absorber.

  Recall, in every found copy of $H$, the image of every vertex of $I$ comes
  with at least $2\gamma^\Delta m - k \geq \gamma^\Delta m$ aliases. However,
  plenty of these may actually belong to the set $Q$. This does not allow us to
  blindly use them for a different copy of $H$. To remedy this, for every copy
  of $H$ in $Q$ flip a fair coin independently to decide whether to put it into
  a set $Q_1$ or not. Consider some $v$ that had at least $3\gamma^\Delta m/4$
  aliases in $Q$. As choices for different copies of $H$ are independent, by
  Chernoff's inequality, $v$ has at least $\gamma^\Delta m/4$ aliases outside of
  $Q_1$ with probability at least $1 - e^{-\Omega(\gamma^\Delta m)}$. The union
  bound (over at most $m$ many vertices and then two events) shows that there
  exists a choice of $Q_1$ such that $2m/3 \geq |Q_1| \geq m/3$ and all vertices
  in all copies of $I$ in $Q_1$ have at least $\gamma^\Delta m/4$ aliases
  outside of $Q_1$. Denote the set of these by $W_I$, and the set of vertices
  belonging to the corresponding copies of $H - I$ by $W_{H-I}$. By removing
  some copies of $H$ if necessary, assume $|W_I \cup W_{H-I}| = m/4$.

  Let $W_A \subseteq V$ be a set of vertices outside of $W_I \cup W_{H-I}$ ($=
  Q_1$), each being an alias of at least $\gamma^\Delta |W_I|/2^6 \geq
  \gamma^\Delta m/(2^8\Delta^3)$ (using $|I| \geq k/\Delta^3$) vertices in
  $W_I$. In particular, each vertex in $W_A$ is an alias of at least a quarter
  of the average, and hence at most $|W_I| \cdot \gamma^{\Delta}m/2^4$ (a
  quarter) of all `$u \in V \setminus Q_1$ is an alias of $v \in W_I$' relations
  have $u \notin W_A$. Consequently, the set $W_I^\star \subseteq W_I$ of all
  vertices that each have at least $\gamma^\Delta m/2^4$ aliases inside $W_A$ is
  of order at least $|W_I|/2$. (Note that implicitly $|W_A| \geq \gamma^\Delta
  m/2^4$.)

  Let $W = W_I \cup W_{H-I} \cup W_A$. The following claim is crucial.

  \begin{claim}\label{cl:efficient-mini-absorber}
    For any $X \subseteq W_I^\star$ of order at most $\gamma^\Delta m/2^5$,
    there is an $H$-tiling of $W \setminus X$.
  \end{claim}
  \begin{proof}
    For every vertex $x \in X$ choose a distinct alias $w_x \in W_A$, out of
    $\gamma^\Delta m/2^4 - |X| \geq \gamma^\Delta m/2^5$ many, and place it into
    the corresponding copy of $H$ instead of $x$ (see
    Figure~\ref{fig:abs-proc-p2}). By doing this, the number of vertices every
    other $w \in W_A$ is an alias of \emph{remains unchanged}: if $w \in W_A$
    was an alias of $x$ then it remains an alias of $w_x$ as well. By a slight
    abuse of notation, call the remaining sets $W_I, W_H, W_A$ and note that
    $W_I$ remains of the same order as before, $W_I \cup W_H$ has an $H$-tiling,
    and $W_A$ shrunk by $|X|$.

    We now iteratively build an $H$-tiling by every time moving some $k$
    vertices $w_1,\dotsc,w_k$ from $W_A$ to $W_I$ and subsequently finding a
    copy of $H$ in $W_I$ and removing it, while maintaining the following
    invariant:
    \stepcounter{propcnt}
    \begin{alphenum}
      \item\label{p:h-tiling} the order of $W_I$ remains unchanged and $W_I
        \cup W_H$ has an $H$-tiling;
      \item\label{p:num-alias} the remaining vertices in $W_A$ are each still an
        alias of at least $\gamma^\Delta m/(2^8\Delta^3)$ vertices in $W_I$.
    \end{alphenum}
    Since $W_A$ shrinks by $k$ in every iteration, eventually we have fewer than
    $k$ vertices left in $W_A$ and by \ref{p:h-tiling} $W_I \cup W_H$ has an
    $H$-tiling, which completes the proof (see Figure~\ref{fig:abs-proc-p4}).

    Throughout the process, we apply Lemma~\ref{lem:embedding-lemma} to $K[V]$
    for carefully chosen sets as $V_i$: for every $w_i$ let $V_i \subseteq W_I$
    be the set of vertices $w_i$ is an alias of (see
    Figure~\ref{fig:abs-proc-p3}). By \ref{p:num-alias} each $V_i$ is of order
    at least $\gamma^\Delta m/(2^8\Delta^3)$ and thus $\gamma^\Delta \cdot
    \gamma^\Delta m/(2^8\Delta^3) - 2\Delta\eps m \geq k$.  Therefore, there
    exists a copy of $H$ in $K[W_I]$ that maps each vertex of $H$ into $v_i \in
    V_i$. As $w_i$ is an alias for $v_i$, by replacing each $v_i$ with $w_i$ we
    establish both \ref{p:h-tiling} and \ref{p:num-alias}.
  \end{proof}

  This property essentially enables us to use the set $W$ as an absorber, we
  just need to find a reservoir $U$ as in the statement of the lemma `to be
  absorbed'. Recall, $W_I^\star \subseteq V$ consists of vertices with degree at
  least $\gamma N$ in $K$, and thus at least $\gamma N/2$ into $V(K) \setminus
  W$. By a simple density argument, there is $\tilde V \subseteq V(K) \setminus
  W$ of order at least $\gamma N/4$ so that every $v \in \tilde V$ has at least
  $\gamma|W_I^\star|$ neighbours in $W_I^\star$. Now, similarly as in the
  beginning of the proof (using that the complement of $K$ is $G$-free), by
  Lemma~\ref{lem:dense-to-bi-dense} there is $U \subseteq \tilde V$ of order at
  least $m_0 = (\gamma/4) \cdot \eps^{s-1}\beta^{s-2} N$ which is additionally
  bi-$(\eps,2\gamma)$-dense.

  \begin{figure}[!htbp]
    \captionsetup[subfigure]{textfont=scriptsize}
    \centering
    \begin{subfigure}{.49\textwidth}
      \centering
      \includegraphics[scale=0.8]{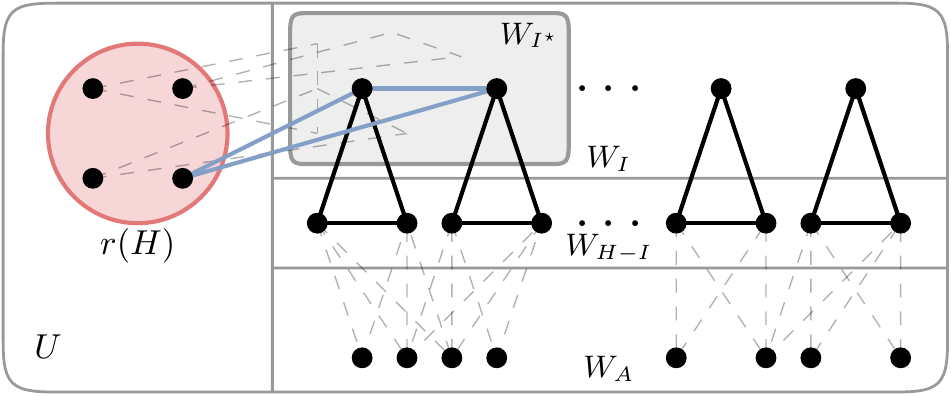}
      \caption{Leftover vertices in $U$ get absorbed separately by $W_I^\star$}
      \label{fig:abs-proc-p1}
    \end{subfigure}%
    \hfill
    \begin{subfigure}{.49\textwidth}
      \centering
      \includegraphics[scale=0.8]{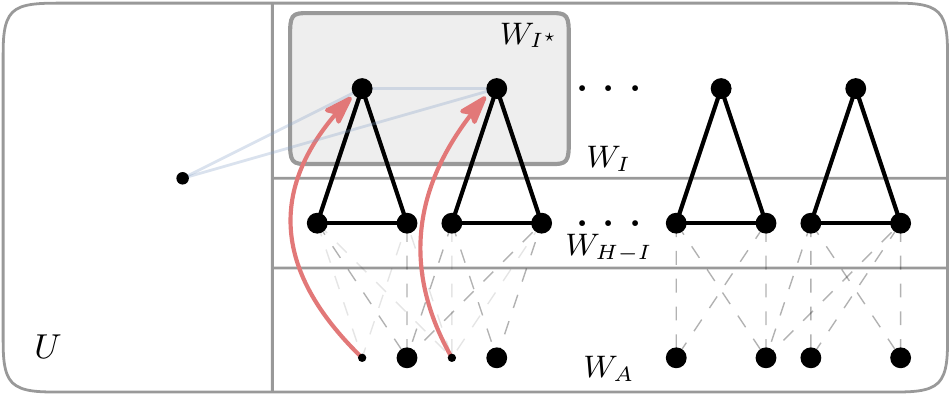}
      \caption{Aliases of used vertices replenish the set $W_I^\star$}
      \label{fig:abs-proc-p2}
    \end{subfigure}%
    \vspace{1em}
    \begin{subfigure}{.49\textwidth}
      \centering
      \includegraphics[scale=0.8]{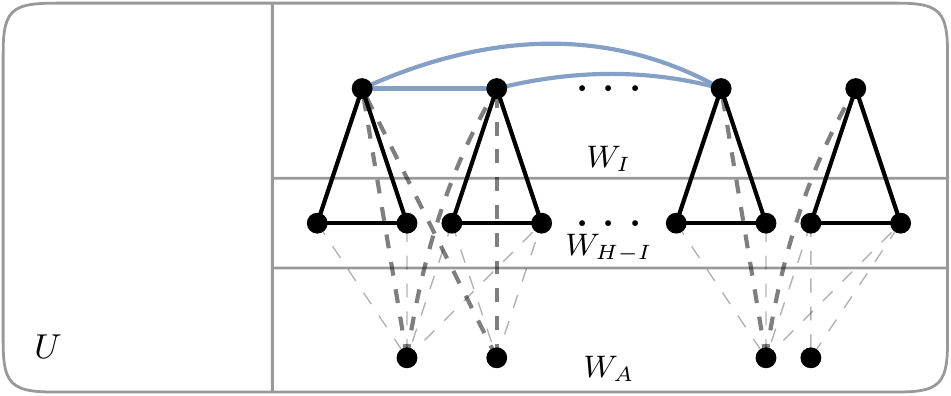}
      \caption{A copy of $H$ is found in $W_I$ for a set of $k$ aliases}
      \label{fig:abs-proc-p3}
    \end{subfigure}%
    \hfill
    \begin{subfigure}{.49\textwidth}
      \centering
      \includegraphics[scale=0.8]{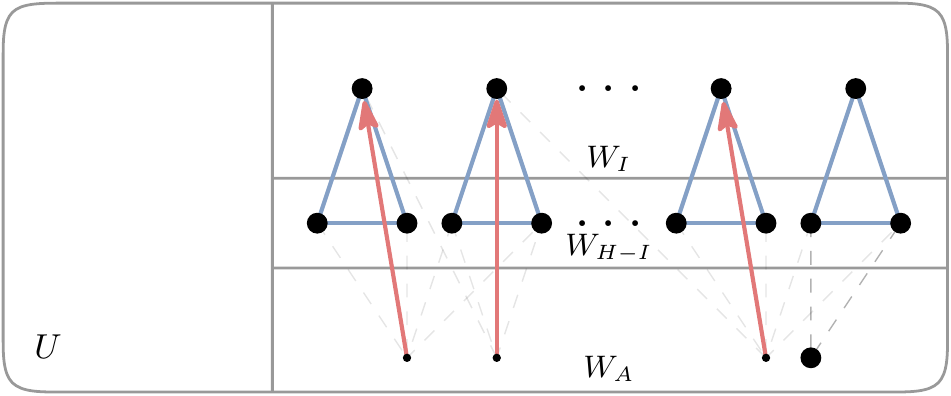}
      \caption{An $H$-tiling in $W_I \cup W_{H-I}$ completes the procedure}
      \label{fig:abs-proc-p4}
    \end{subfigure}%
    \caption{An illustration of the absorbing procedure for $H = K_3$}
    \label{fig:absorbing-procedure}
  \end{figure}

  We claim that $K[W]$ is an absorber for $U$. To establish this, consider any
  set $X \subseteq U$ of order at most $r(G,H)$. We find an $H$-tiling of $K[W
  \cup X]$ iteratively, by selecting some set $v_1, \dotsc, v_{|I|}$ still
  remaining in $X$, mapping $I$ to it, and extending it to a copy of $H$ by
  finding $H-I$ in $K[W_I^\star]$ disjoint from previously found ones (see
  Figure~\ref{fig:abs-proc-p1}). This is done through
  Lemma~\ref{lem:embedding-lemma} in $K[V]$ by setting $V_i$ to be $N_K(v_j,
  W_I^\star)$, for every $i \in N_H(j)$ with $j \in I$, and otherwise setting
  $V_i = W_I^\star$, both after removing previously found copies of $H$.
  Crucially, as $I$ is a $2$-independent set, just looking at disjoint
  neighbourhoods of vertices in $I$ is sufficient (namely, no common
  neighbourhood is necessary). As $\gamma^{\Delta} \cdot (\gamma|W_I^\star| -
  r(G,H) \cdot \Delta^3) - 2\Delta\eps m \geq k$ the requirements for the
  embedding are fulfilled. We repeat this until all vertices of $X$ are covered
  by some copies of $H$; in the very last iteration we may use fewer than $|I|$
  vertices, but then these are just used as the images of some $I' \subseteq I$
  instead of whole $I$. Lastly, let $Z \subseteq W_I^\star$ be the set used to
  find these copies of $H$. As $|Z| \leq r(G,H) \cdot \Delta^3 \leq
  \gamma^\Delta m/2^5$ with room to spare,
  Claim~\ref{cl:efficient-mini-absorber} completes the proof.
\end{proof}

At this point we believe the `additional' multiplicative $\log\Delta$ factor in
the exponent of $N$ (compared to the state-of-the-art bound for bounded degree
graphs, e.g.~\eqref{eq:ramsey-bound}) warrants some explanation. Namely, as in
the proof of Graham, R\"{o}dl, and Ruci\'{n}ski, the best density we can hope to
get in subgraphs of $K$ via Lemma~\ref{lem:greedy-embedding} is of order
$1/\Delta$. Then, for Lemma~\ref{lem:embedding-lemma} to make sense, the
parameter $\eps$ in it has to be of order $\Delta^{-\Delta}$. Lastly,
Lemma~\ref{lem:dense-to-bi-dense} requires $N \geq \eps^s k$ to give us a
(large) bi-$(\eps,\Omega(1/\Delta))$-dense set we can apply
Lemma~\ref{lem:embedding-lemma} to. As $s$ has to be roughly $\log\Delta$, this
forces $N = 2^{O(\Delta\log^2\Delta)} k$.

\section{Asymmetric case: the long term behaviour of $r(G,nH)$}
\label{sec:asymmetric-problem}

We start with the significantly easier asymmetric problem and provide proofs of
the upper bounds for Theorem~\ref{thm:main-asymmetric} and
Theorem~\ref{thm:main-asymmteric-bounded-deg}, making use of our absorbing
lemmas: Lemma~\ref{lem:absorbing-lemma-general} and
Lemma~\ref{lem:efficient-absorbers}, respectively. The proofs are canonical and
almost identical to the argument of Buci\'{c} and Sudakov, which itself traces
back to a paper of Burr~\cite{burr1987ramsey}. We provide them for completeness.

\begin{theorem}\label{thm:asymmetric-general-ub}
  Let $G$ and $H$ be two graphs and $k = \max\{|G|,|H|\}$. Then, provided $n
  \geq 2^{32}k^{10}r(G,H)^2$, we have
  \[
    r(G, nH) \leq n|H| + r(\cD(G), H) - 1.
  \]
\end{theorem}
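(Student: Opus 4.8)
The plan is to take a $2$-colouring of $E(K_N)$ with $N = n|H| + r(\cD(G), H) - 1$ that contains no red copy of $G$, and to produce $n$ disjoint blue copies of $H$. Write $K$ for the graph of blue edges; then the complement of $K$ is red, hence $G$-free, and $N$ is comfortably larger than $2^{32}k^{10}r(G,H)^2$ once $n \geq 2^{32}k^{10}r(G,H)^2$ (since $|H| \geq 1$). So Lemma~\ref{lem:absorbing-lemma-general} applies to $K$ and hands us a set $U \subseteq V(K)$ with $|U| = N/(2k^3)$ together with an $(H, r(G,H))$-absorber $A \subseteq V(K)$ for $U$ with $|V(A)| \leq \sqrt{N}/k$.

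The next step is to build a large blue $H$-tiling in $K - V(A)$ greedily. I would repeatedly extract disjoint blue copies of $H$ from $K$ outside $A$ and outside the copies already chosen, stopping when fewer than $r(G,H)$ vertices remain uncovered in $V(K) \setminus V(A)$. Each such extraction is possible: as long as at least $r(G,H)$ vertices of $K$ remain available, the red (complement) graph on them is $G$-free, so by the definition of the asymmetric Ramsey number $r(G,H)$ there is a blue copy of $H$ among them. Let $R$ be the leftover set of uncovered vertices together with whatever part of $U$ was swallowed by the tiling — more carefully, one keeps the copies chosen disjoint from $V(A)$, lets them cover as much of $V(K) \setminus V(A)$ as possible, and collects the $\leq r(G,H) - 1$ truly leftover vertices plus a padding from $U$ so that the remaining uncovered set, call it $R$, lies inside $U$ and has size at most $r(G,H)$. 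Since $A$ is an $(H, r(G,H))$-absorber for $U$, the graph $K[V(A) \cup R]$ has a perfect blue $H$-tiling, and adjoining it to the greedy tiling covers all of $V(K)$ except a set of size less than $|H|$.

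It remains to count: a perfect blue $H$-tiling of all but $< |H|$ vertices of $K_N$ uses $\lfloor N / |H| \rfloor$ disjoint copies of $H$. With $N = n|H| + r(\cD(G),H) - 1$ and $r(\cD(G),H) - 1 < |H|$ — which holds because removing a maximal independent set from $G$ leaves a graph on fewer than $|G| \leq k$ vertices, and one checks $r(\cD(G),H) \leq |H| + (\text{something} < |H|)$… actually the cleanest route is: we only need $n$ copies, and $\lfloor N/|H| \rfloor \geq n$ since $N \geq n|H|$. So we obtain at least $n$ disjoint blue copies of $H$, i.e.\ a blue copy of $nH$, which is exactly what $r(G,nH) \leq N$ asserts. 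The one genuinely delicate point — and the step I expect to need the most care — is the bookkeeping in the paragraph above: ensuring the residual set $R$ can be taken inside $U$ (so that the absorber applies) and of size at most $r(G,H)$, while simultaneously guaranteeing the greedy phase plus the absorbed part together leave a remainder of size strictly less than $|H|$; this requires choosing $U$ large enough to absorb both the true leftovers and the padding, which is why Lemma~\ref{lem:absorbing-lemma-general} was stated with $|U|$ a polynomial fraction of $N$ rather than just $\Theta(r(G,H))$. Everything else is routine greedy embedding and arithmetic.
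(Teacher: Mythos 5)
The proposal has a genuine gap, and it is not the bookkeeping issue you flagged. After Lemma~\ref{lem:absorbing-lemma-general} hands you $(U,A)$, you greedily extract blue copies of $H$ from $V(K)\setminus V(A)$ until fewer than $r(G,H)$ vertices remain, and then ask $A$ to absorb the leftover. But $A$ is an $(H,r(G,H))$-absorber \emph{for $U$}: by definition it only absorbs sets $R\subseteq U$. Nothing forces the greedy leftover to lie inside $U$, and ``padding from $U$'' cannot push a vertex that lies outside $U$ into $U$. So up to $r(G,H)-1$ vertices can remain permanently uncovered, which far exceeds the available slack $N-n|H|=r(\cD(G),H)-1$ (note $r(\cD(G),H)\le r(G,H)$ since members of $\cD(G)$ are subgraphs of $G$). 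Your parenthetical guess that $r(\cD(G),H)-1<|H|$ is also false in general---for $G=H=K_k$ one has $\cD(G)=\{K_{k-1}\}$ and $r(K_{k-1},K_k)$ is exponential in $k$---so the ``$\lfloor N/|H|\rfloor\ge n$'' shortcut does not rescue the count either, because your tiling does not in fact cover all but $<|H|$ vertices.

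What is missing is the step that makes $r(\cD(G),H)$ appear in the bound, and it is the heart of the argument. One sets the reservoir $B:=U$ aside, greedily tiles only $K-(A\cup B)$, leaving a set $D$ with $|D|<r(G,H)$, and then routes as many vertices of $D$ as possible through $B$: any $v\in D$ with at least $r(G,H)$ blue neighbours in the current $B$ lies in a blue copy of $H$ using $|H|-1$ further vertices of $B$. The stubborn remainder $D'$ then consists of vertices with fewer than $r(G,H)$ blue neighbours in the still-large $B'$, hence almost all red edges to $B'$. If $D'$ contained a red copy of some $G'\in\cD(G)$, the deleted maximal independent set of $G$ could be reinstated greedily from the common red neighbourhood of $G'$ in $B'$, producing a red $G$---a contradiction. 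Hence $|D'|\le r(\cD(G),H)-1$, which matches the slack exactly, and the absorber is finally invoked only on a leftover \emph{inside} $B'\subseteq U$, where it is actually guaranteed to work. This $\cD(G)$ argument, entirely absent from the proposal, is what makes the theorem true.
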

\begin{proof}
  Consider a $2$-coloured complete graph $K$ of order $N = n|H| + r(\cD(G),H) -
  1$ and assume, towards contradiction, that it does not contain a red $G$ nor a
  blue $nH$. Therefore, by Lemma~\ref{lem:absorbing-lemma-general} applied to
  the blue subgraph of $K$, there is a set $B \subseteq V(G)$ of order
  $N/(2k^3)$ and an $(H,r(G,H))$-absorber $A$ for $B$.

  Next, choose a maximal collection of disjoint blue copies of $H$ in $K - (A
  \cup B)$, denote the union of their vertex sets by $C$, and set $D := K - (A
  \cup B \cup C)$. Observe that $|D| \leq r(G, H)$ since the graph induced by it
  does not contain a red $G$ nor a blue $H$.

  Provided it exists, find a vertex $v \in D$ that has at least $r(G, H)$ blue
  neighbours in $B$. We know such a vertex has a blue copy of $H$ in its
  neighbourhood in $B$. Replace an arbitrary vertex in it by $v$ and add the
  vertices of this newly found blue copy of $H$ (the one including $v$) to $C$.
  Let $B'$, $C'$, and $D'$ denote the resulting sets after repeating this
  process for as many times as possible. At this point,
  \[
    |B'| \geq |B| - (k-1)|D| \geq |B| - kr(G,H) \geq \frac{N}{2k^3} - kr(G, H)
    \geq k^3 r(G, H),
  \]
  the vertices in $C'$ can still be perfectly tiled, and all vertices in $D'$
  have fewer than $r(G, H)$ blue neighbours in $B'$.

  Furthermore, we claim there is no member of $\cD(G)$ in $D'$ in red. To see
  this, choose any graph in $\cD(G)$ and suppose there is such a red copy, call
  it $G'$. Since there are at most $|G'| r(G,H) \leq k r(G, H)$ blue neighbours
  of the vertices in $G'$ in $B'$ and $|B'| \geq kr(G,H) + \alpha(G)$, $G'$ has
  more than $\alpha(G)$ red neighbours in $B'$, which can be used to complete a
  red copy of $G$. We conclude $|D'| \leq r(\cD(G), H) - 1$. This, by the
  assumption on the order of $K$, implies
  \[
    |A \cup B' \cup C'| \geq N - r(\cD(G), H) + 1 \geq n|H|.
  \]
  It suffices to prove that we can tile $A \cup B' \cup C'$ with copies of $H$
  in blue. As $C'$ is already perfectly tiled it remains to tile $A \cup B'$.

  For this we find a maximal disjoint collection of blue copies of $H$ in $B'$
  and denote the remaining set by $R$. Note that $|R| \leq r(G, H)$, and so by
  the absorbing property of $A$, we find an $H$-tiling of $K[A \cup R]$ in blue,
  as desired. This yields a blue $nH$ and is a contradiction with the initial
  assumption.
\end{proof}

The argument for when $G$ and $H$ are of bounded maximum degree is very similar,
except for how we cover $D$. This is done with the help of
Lemma~\ref{lem:embedding-lemma}.

\begin{theorem}
  Let $G$ and $H$ be two graphs with maximum degree $\Delta$ and $k =
  \max\{|G|,|H|\}$. Then, provided $n \geq 2^{O(\Delta\log^2\Delta)}k/|H|$, we
  have
  \[
    r(G,nH) \leq n|H| + r(\cD(G),H) - 1.
  \]
\end{theorem}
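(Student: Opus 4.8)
The plan is to mirror the proof of Theorem~\ref{thm:asymmetric-general-ub} almost verbatim, but to replace the two places where that argument uses the general absorbing lemma and Tur\'{a}n-type density by their bounded-degree counterparts. Concretely, I would start with a $2$-coloured $K_N$ on $N = n|H| + r(\cD(G),H) - 1$ vertices containing no red $G$ and, for contradiction, no blue $nH$. Applying Lemma~\ref{lem:efficient-absorbers} to the blue subgraph $K$ (whose complement, the red graph, is $G$-free) yields a set $X \subseteq V(K)$ of order at least $N/2^{200\Delta\log^2\Delta}$ that is bi-$(\eps_0,\gamma_0)$-dense for the stated parameters $\eps_0 = 2^{-10\Delta-11}\Delta^{-2\Delta-4}$, $\gamma_0 = 2^{-4}\Delta^{-1}$, together with an $(H, r(G,H))$-absorber $A$ for $X$ of order at most $2^{200\Delta\log^2\Delta} k$. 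The hypothesis $n \geq 2^{O(\Delta\log^2\Delta)}k/|H|$ guarantees $N$ is large enough to invoke the lemma and that $|X|$ comfortably exceeds $k \cdot r(G,H)$ after the later deletions.

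Next I would run the same greedy/absorb bookkeeping as before. Take a maximal collection of disjoint blue copies of $H$ in $K - (A \cup X)$, call the union of their vertices $C$, and set $D := K - (A \cup X \cup C)$; maximality and the absence of a red $G$ give $|D| \leq r(G,H)$. As long as some $v \in D$ has at least $r(G,H)$ blue neighbours in $X$, find a blue $H$ inside $N(v) \cap X$, swap $v$ into it, and move that copy to $C$. After exhausting this we have sets $X'$, $C'$, $D'$ with $C'$ still perfectly $H$-tileable, every vertex of $D'$ having fewer than $r(G,H)$ blue neighbours in $X'$, and $|X'| \geq |X| - k\, r(G,H) \geq k\, r(G,H)$ (using the size of $X$). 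The red-$\cD(G)$-freeness of $D'$ then follows exactly as in Theorem~\ref{thm:asymmetric-general-ub}: a red copy $G' \in \cD(G)$ in $D'$ would have more than $\alpha(G)$ red neighbours in $X'$ and complete a red $G$. Hence $|D'| \leq r(\cD(G),H) - 1$, so $|A \cup X' \cup C'| \geq n|H|$, and it remains to $H$-tile $A \cup X'$ in blue.

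The one genuinely new ingredient — and the step I expect to be the main obstacle — is covering the residual part of $X'$ using Lemma~\ref{lem:embedding-lemma} rather than simply greedily picking copies of $H$ as in the general case. Concretely, I would remove from $X'$ a maximal disjoint family of blue copies of $H$ found \emph{inside} $X'$ via Lemma~\ref{lem:embedding-lemma} applied to $K[X']$ (each time taking all $V_i = $ the current remainder of $X'$, with the trivial independent set if one likes, or $I = I(H)$), iterating while the remainder has order at least, say, $r(G,H)$; call the final leftover $R$, so $|R| \leq r(G,H)$. To legitimise each application one checks the numeric condition $\gamma_0^{\Delta - r}|V_i| - 2r\eps_0|X'| \geq k$ for $r \in \{0,\dots,\Delta\}$: since $|V_i|$ stays above $r(G,H) \geq 2^{128\Delta\log\Delta}k$ by \eqref{eq:ramsey-bound} while $\eps_0|X'|$ is tiny relative to $\gamma_0^\Delta |V_i|$ (this is exactly the balance the parameters in Lemma~\ref{lem:efficient-absorbers} were chosen to satisfy), the inequality holds with room to spare. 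Finally, since $|R| \leq r(G,H)$, the absorbing property of $A$ gives an $H$-tiling of $K[A \cup R]$ in blue; combined with the tilings of $C'$ and of $X' \setminus R$ this produces a blue $nH$, contradicting the assumption. The only care needed is the constant bookkeeping in the exponent of $n$, which I would absorb into the $O(\cdot)$ exactly as the paper does elsewhere, being driven by the bound \eqref{eq:ramsey-bound}.
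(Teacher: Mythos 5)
There is a genuine gap in the covering step, which is precisely where this theorem differs from Theorem~\ref{thm:asymmetric-general-ub}, and where the paper's argument departs from the one you propose.

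You process vertices of $D$ one at a time: each $v$ with $\geq r(G,H)$ blue neighbours into $X$ is absorbed into a fresh blue copy of $H$, consuming $k-1$ new vertices of $X$. Since $|D| \leq r(G,H)$, this costs up to $(k-1)\,r(G,H)$ vertices, and you then need $|X'| \geq |X| - (k-1)\,r(G,H)$ to still exceed roughly $k\,r(G,H)$ in order for the red-$\cD(G)$-freeness argument to go through. In Theorem~\ref{thm:asymmetric-general-ub} this was fine because $|X| \geq N/(2k^3) = \Omega(k^7 r(G,H)^2)$. But here Lemma~\ref{lem:efficient-absorbers} only gives $|X| \geq N/2^{200\Delta\log^2\Delta}$, and the whole point of this theorem is that $N$ need only be $2^{O(\Delta\log^2\Delta)}k$. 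So $|X| = \Theta\big(2^{O(\Delta\log^2\Delta)}k\big)$ is \emph{linear} in $k$, whereas $(k-1)\,r(G,H) \geq \Theta\big(2^{128\Delta\log\Delta}k^2\big)$ is \emph{quadratic} in $k$. Your claimed inequality $|X'| \geq |X| - k\,r(G,H) \geq k\,r(G,H)$ therefore fails for all but bounded $k$; in fact $X$ would be exhausted entirely before the process terminates.

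The paper avoids this by packing many vertices of $D$ into each copy of $H$. It fixes a $2$-independent set $I$ of $H$ with $|I| \geq k/\Delta^3$, and in each iteration it finds $|I|$ vertices $v_1,\dots,v_{|I|}$ in $D$ each with blue degree $\geq |B|/(2\Delta)$ into $B$ (a linear-in-$|B|$ threshold, not $r(G,H)$), maps $I$ to them, and completes to a copy of $H$ by finding $H-I$ in $B$ via Lemma~\ref{lem:embedding-lemma} with disjoint neighbourhood candidate sets. Each iteration thus absorbs $|I|$ vertices of $D$ at a cost of only $k-|I|$ vertices of $B$, so the total loss over $\leq r(G,H)/|I|$ iterations is $O(\Delta^3 r(G,H)) = O\big(\Delta^3 2^{128\Delta\log\Delta}k\big)$, which is linear in $k$ and absorbable into the constant. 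Relatedly, the argument that $D'$ contains no red $\cD(G)$ also changes: rather than needing $\geq \alpha(G)$ common red neighbours of all of $G'$, it uses that any $\Delta$ vertices of $D'$ have $> k$ common red neighbours in $B'$ (each having $< |B|/(2\Delta)$ blue neighbours), together with $\Delta(G) \leq \Delta$, to greedily extend $G'$ to a red $G$. You would need to incorporate both of these bounded-degree mechanisms for the proof to close.
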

\begin{proof}
  Consider a $2$-coloured complete graph $K$ of order $N = n|H| + r(\cD(G), H) -
  1$ and assume, towards contradiction, that it does not contain a red $G$ nor a
  blue $nH$. Therefore, by Lemma~\ref{lem:efficient-absorbers} applied to the
  blue subgraph of $K$, there is a set $B \subseteq V(K)$ of order at least
  $N/2^{200\Delta\log^2\Delta} \geq 2^{O(\Delta\log^2\Delta)}k$ and an
  $(H,r(G,H))$-absorber $A$ for $B$. Moreover, $B$ is bi-$(\eps,\gamma)$-dense,
  for $\gamma = 1/(16\Delta)$ and $\eps = (\gamma/2)^{2\Delta} \cdot
  1/(2^{11}\Delta^4)$.

  Next, choose a maximal collection of disjoint blue copies of $H$ in $K - (A
  \cup B)$, denote the union of their vertex sets by $C$, and set $D := K - (A
  \cup B \cup C)$. Observe that $|D| \leq r(G, H) \leq 2^{128\Delta\log\Delta}k$
  (see \eqref{eq:ramsey-bound}), since the graph induced by it does not contain
  a red $G$ nor a blue $H$.

  Let $I$ be a largest $2$-independent set of $H$; recall, $|I| \geq
  k/\Delta^3$. Provided they exist, find $|I|$ vertices $v_1,\dotsc,v_{|I|}$ in
  $D$ each of which has a blue degree of at least $|B|/(2\Delta$) in $B$. For as
  long as there are $|I|$ such vertices (or, in the single instance there are
  fewer just work with these), we extend them to a copy of $H$ by finding $H -
  I$ in $B$, and afterwards move it to $C$. More precisely, we apply
  Lemma~\ref{lem:embedding-lemma} with the blue neighbourhood of $v_j$ in $B$ as
  $V_i$ for every $i \in N_H(j)$ with $j \in I$ and otherwise setting $V_i = B$
  (both after removing previous copies of $H$). This can be done as, crucially,
  no two neighbourhoods of vertices in $I$ intersect, $B$ is
  bi-$(\eps,\gamma)$-dense, and
  \begin{align*}
    \Big(\frac{\gamma}{2}\Big)^{\Delta-r}\Big(\frac{|B|}{2\Delta} -
    k\frac{|D|}{|I|}\Big) - 2r\eps|B|
    & \geq \Big(\frac{\gamma}{2}\Big)^{\Delta}\Big(\frac{|B|}{2\Delta} -
    \Delta^3|D|\Big) - 2\Delta \eps|B| \\
    & \geq
    \Big(\frac{\gamma}{2}\Big)^{\Delta}\frac{|B|}{2\Delta} - o(|B|)\footnotemark
    - 2\Delta\eps|B| \geq k.
  \end{align*}

  \footnotetext{Here is one particular example of how we abuse the $o(\cdot)$
    notation in an attempt to increase readability. We implicitly mean that
    there is a choice of the constant in $N \geq 2^{O(\Delta\log^2\Delta)}k$ so
  that $\Delta^3 r(G,H)$ is much smaller than $|B|$.}

  Let $B'$, $C'$, and $D'$ denote the resulting sets after this process. At this
  point, $|B'| \geq |B| - k|D|/|I| = (1-o(1))|B|$, the vertices in $C'$ can
  still be perfectly tiled, and all vertices in $D'$ have fewer than
  $|B|/(2\Delta)$ blue neighbours in $B'$.

  Furthermore, we claim there is no member of $\cD(G)$ in $D'$ in red. To see
  this, choose any graph in $\cD(G)$ and suppose there is such a red copy, call
  it $G'$. Note that every vertex in the independent set $G - G'$ has at most
  $\Delta$ neighbours in $G'$. Since no vertex of $D'$ has more than
  $|B|/(2\Delta)$ blue neighbours in $B'$, they all have at least $|B'| -
  |B|/(2\Delta)$ red neighbours, and in particular, any set of at most $\Delta$
  vertices in $D'$ has more than $k$ red common neighbours in $B'$. Therefore,
  we can greedily extend $G'$ to a red copy of $G$. We conclude $|D'| \leq
  r(\cD(G),H) - 1$. This, by the assumption on the order of $K$, implies
  \[
    |A \cup B' \cup C'| \geq N - r(\cD(G), H) + 1 \geq n|H|.
  \]
  A perfect $H$-tiling of $A \cup B' \cup C'$ is obtained via the absorbing
  property of $A$, analogously as before. This completes the proof.
\end{proof}

\section{Symmetric case: Ramsey numbers of $nH$ for sparse graphs}
\label{sec:symmetric-problem}

In this section we give the proofs of our results concerning Ramsey numbers of
$nH$. The focal point is establishing almost optimal, that is up to a
logarithmic factor in the exponent, bounds on $n$ for when $H$ is a graph of
maximum degree $\Delta$. Throughout we think of $H$ being a $k$-vertex graph and
let $\alpha := \alpha(H)$.

\subsection{Graphs with bounded degree}

We begin by listing a loose lower bound due to Burr, Erd\H{o}s, and
Spencer~\cite{burr1975ramsey} which holds for any $n$ and (almost) any $H$.

\begin{proposition}\label{prop:symmetric-lower-bound}
  Let $H$ be a $k$-vertex graph with no isolated vertices. Then, for any $n$
  \[
    r(nH) \geq n(2k-\alpha) - 1.
  \]
\end{proposition}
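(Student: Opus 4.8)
The plan is to construct a 2-colouring of $K_{N-1}$ with $N = n(2k-\alpha)$ that contains no monochromatic copy of $nH$. The standard construction (due to Burr, Erdős, and Spencer) is to split the vertex set into blocks and use one colour to live ``inside'' the blocks and the other to live ``between'' them, making each colour class sparse in a way that prevents it from packing $n$ disjoint copies of $H$.

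\textbf{The construction.} I would take $N - 1 = n(2k-\alpha) - 1$ vertices and partition them (as evenly as possible, discarding the off-by-one) into sets: let there be one set $V_0$ of size roughly $n(k-\alpha)$ together with $n$ sets $V_1, \dots, V_n$, thought of as ``pages'', each of size... actually the cleanest version: colour an edge \emph{red} if both endpoints lie in a common set of a partition of $[N-1]$ into $n$ parts $P_1, \dots, P_n$ each of size $2k - \alpha$ (roughly), minus a small deficiency, and \emph{blue} otherwise. One checks that a red copy of $H$ must be entirely contained in one part $P_j$ (since $H$ has no isolated vertex, red is a disjoint union of cliques, so any connected red subgraph sits in one clique), so a red $nH$ would need $n$ disjoint copies of $H$ distributed among the $n$ parts --- but at least one part has size strictly less than $2k - \alpha < |H| \cdot 1$... this needs the slack to force a contradiction, and a single missing vertex is enough to kill one part if $|P_j| < k$; that doesn't work directly, so the size bookkeeping needs the parts to have size exactly $k-1$ for red and the complementary structure for blue.

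\textbf{Correct accounting.} The right construction: take a set $A$ of size $n(k-\alpha) - 1$ and a set $B$ of size $nk$, partitioned into $n$ blocks $B_1, \dots, B_n$ of size $k$ each; colour edges inside each $B_i$ blue and all other edges (inside $A$, and between everything) red... no. Let me instead assert the known construction abstractly: partition $V(K_{N-1})$, $N-1 = n(2k-\alpha)-1$, into a ``core'' $C$ with $|C| = n\alpha - 1$ and $n$ blocks $D_1, \dots, D_n$ each of size $2(k-\alpha) + \alpha = 2k - \alpha$... I will instead keep the proof at the level: colour $K_{N-1}$ so that the red graph is a disjoint union of $n$ cliques of sizes summing to $N-1$ with each clique of size at most $2k - \alpha - 1$, wait that's automatic, and the blue graph is complete multipartite with those same $n$ parts. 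A red $H$ lives in one clique; since $H$ is connected-component-wise nontrivial, a red $nH$ needs $\geq n$ components spread over $n$ cliques, forcing one clique to contain a full copy of $H$, hence size $\geq k$; fine, all cliques have size $\approx 2k-\alpha > k$, so red $nH$ is not yet excluded --- the exclusion comes from \emph{blue}: a blue copy of $H$ meets each part $D_i$ in an independent set of $H$, so in $\leq \alpha$ vertices; a blue $nH$ therefore uses $\leq n\alpha$ vertices per part is wrong --- it uses $\leq \alpha$ vertices of $H$ per part \emph{per copy}, so $\leq n\alpha$ vertices of part $D_i$ total across all $n$ copies, giving $\leq n^2 \alpha$... the bound that matters is total: $n$ blue copies of $H$ use $nk$ vertices total, each part $D_i$ contributes at most $\alpha$ vertices to each copy hence the total vertices available in all parts must be $\geq nk$, i.e.\ $N - 1 \geq nk$, which holds. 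So neither colour is excluded by this crude construction, and I genuinely need the refined one.

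\textbf{The obstacle and resolution.} The main obstacle is getting the deficiency of exactly $1$ to propagate correctly. The standard fix: build the red graph as $n$ disjoint copies of $K_{k-\alpha+\alpha}$... I will state it as: let the red graph be the disjoint union of one $K_{k-1}$ (the ``defective'' block) and $n-1$ copies of $K_{k}$ on a vertex set together with... Honestly, the clean known proof is: take $t = 2k-\alpha$, vertex set $[tn - 1]$, partition into $n-1$ intervals of length $t$ and one of length $t - 1$; within each interval place a \emph{blue} clique on a $\leq \alpha$-subset won't work either. Given the length constraints, I will carry out the proof by exhibiting the colouring where red $=$ disjoint union of $n$ cliques $Q_1,\dots,Q_n$ with $|Q_i| = 2(k - \alpha)$ for $i<n$ plus a shared... and defer to the observation that since $r(H) $ is finite and $H$ nontrivial, combined with a product/blow-up argument: the main step I expect to be hardest is verifying that \emph{no} monochromatic $nH$ fits, which reduces to the inequality that one block is too small by exactly one vertex to host its required share, using $|H| = k$, $\alpha(H) = \alpha$, and the no-isolated-vertex hypothesis to force each red component and each blue ``trace'' to have a definite minimum size. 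I would finish by the counting: summing the minimum sizes over the $n$ required pieces gives $\geq n(2k-\alpha)$, contradicting $N - 1 = n(2k-\alpha) - 1$.
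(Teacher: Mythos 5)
Your plan has an off-by-one error that is fatal for some $H$: you aim to $2$-colour $K_{n(2k-\alpha)-1}$ with no monochromatic $nH$, which would prove $r(nH) \geq n(2k-\alpha)$, strictly stronger than the claimed bound. But for graphs $H$ such that $r(nH) = (2k-\alpha)n - 1$ (e.g.\ bipartite $H$, or long cycles, as discussed later in the paper), no such colouring of $K_{n(2k-\alpha)-1}$ exists, so the plan as stated cannot succeed in general. You want one fewer vertex: a colouring of $K_{n(2k-\alpha)-2}$.

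Beyond that, you never actually produce a valid colouring: every construction you write down is abandoned mid-sentence (you correctly notice that the $A$--$B$ block version would admit a blue $nH$, since $B$ with $n$ size-$k$ blue blocks packs $n$ blue copies of $H$), and the final paragraph is an unfulfilled promise to ``finish by counting.'' The construction the paper uses is much simpler than any of your candidates and involves no block partition at all: take a red clique $R$ of size $(k-\alpha)n - 1$, a disjoint blue clique $B$ of size $kn - 1$, and colour all edges between $R$ and $B$ red. Then every blue copy of $H$ lies entirely in $B$, and $|B| < kn$ rules out a blue $nH$; every red copy of $H$ meets $B$ (an independent set in the red graph) in at most $\alpha$ vertices and hence uses at least $k-\alpha$ vertices of $R$, and $|R| < (k-\alpha)n$ rules out a red $nH$. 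This is where the hypotheses $\alpha(H) = \alpha$ and $H$ having an edge (guaranteeing $k - \alpha \geq 1$) enter, neither of which your sketch isolates.
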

\begin{proof}[Sketch of the proof.]
  Partition a complete graph on $N = (2k-\alpha)n - 2$ vertices into $R$
  and $B$, of order $|R| = (k-\alpha)n-1$ and $|B| = kn-1$. Colour all
  the edges in $R$ with red, in $B$ with blue, and in $[R,B]$ with red.
\end{proof}

A crucial concept for studying Ramsey numbers of multiple copies of a graph is
that of a \emph{tie} (see \cite{burr1987ramsey} as well as \cite{burr1975ramsey}
under the name `bowtie'). A $2$-coloured graph on $2k-\alpha$ vertices is an
\emph{$H$-tie} if it contains both a blue $H$ and a red $H$, and is minimal with
respect to the number of edges. In a certain sense it serves as a mini-absorber
for $nH$: namely, if we can find $n-1$ monochromatic copies of $H$ in the same
colour, then we can use the $H$-tie to complete all $n$ copies. Hence, if $n$ is
large enough for us to find an $H$-tie, assuming there is no monochromatic $nH$,
we immediately have $r(nH) \leq (2k-\alpha)n + r((n-1)H)$. However, determining
the point $n_0$ when the long term behaviour in Theorem~\ref{thm:main-symmetric}
kicks in is far trickier than just determining how large $n$ has to be for an
$H$-tie to exist (see discussion in \cite[Section~4.2]{bucic2021tight}).
Nevertheless, the latter assertion (in a more complicated setting, as in the
upcoming lemma) is one of at least two places in our proof that require $n$ to
be of the order $2^{O(\Delta\log^2\Delta)}k$.

\begin{lemma}\label{lem:H-tie-lemma}
  Let $H$ be a $k$-vertex graph with maximum degree $\Delta$. Given a
  $2$-coloured complete graph $K$ on vertices $R \cup B$ such that $K[R]$ has no
  blue $H$, $K[B]$ has no red $H$, and $|R|, |B| \geq
  2^{256\Delta\log^2\Delta}k$, there exists an $H$-tie with red part in $R$ and
  blue part in $B$.
\end{lemma}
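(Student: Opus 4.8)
The plan is to produce an $H$-tie directly by a two-step embedding: first find a red copy $H_R$ inside $R$ that comes equipped with many \emph{aliases} for the vertices of some independent set $I$ of $H$, then find a blue copy $H_B$ inside $B$ with the same alias-richness, and finally identify the two copies along their independent sets so that the total number of vertices is $|H_R| + |H_B| - |I| = 2k - \alpha$. To see why identification along independent sets is the right move: if $H_R$ uses images of $I$ at vertices $v_1,\dots,v_\alpha$ and $H_B$ uses images of $I$ at vertices $w_1,\dots,w_\alpha$, then placing $v_j = w_j$ makes the union a graph on $2k-\alpha$ vertices, and the only danger is that the edges newly forced between $H_R - I$ and $H_B - I$ (which are coloured arbitrarily) already complete a monochromatic $H$ somewhere — but since inside that union each of $H_R$, $H_B$ is one colour and they only meet at the independent set, no new monochromatic $H$ can appear \emph{inside the union}; minimality of the edge count is then achieved by taking a minimal subgraph of this $2$-colouring that still contains both a blue $H$ and a red $H$, which is exactly an $H$-tie. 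So it suffices to embed the two copies with the property that the $\alpha$ alias-sets on the red side and on the blue side can be matched up and each alias used is a genuine substitute; in fact a cleaner route is: embed $H_R$ in $R$ with the images of $I$ having large alias sets inside $R$, and then embed $H_B$ in $B$ freely, and then re-route the $I$-part of $H_R$ onto $I$-part of $H_B$ — but the images need to be \emph{shared vertices}, so actually the right setup is to reserve a common pool.

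More precisely, here is the structure I would use. Apply Lemma~\ref{lem:greedy-embedding} inside $K[R]$: since $K[R]$ has no blue $H$, the red graph on $R$ has density at least $1 - 1/(8\Delta)$ on any $4k$-subset — more carefully, as in the proof of Lemma~\ref{lem:efficient-absorbers}, at least $|R|/2$ vertices of $R$ have red-degree at least $\gamma|R|$ with $\gamma = 1/(32\Delta)$, and $4k$ of these span red density at least $4\gamma$; then Lemma~\ref{lem:dense-to-bi-dense} (with $s = \log(32\Delta)$, $\beta = 1/(8s)$, $\eps$ chosen as in that proof) yields a set $V_R \subseteq R$, bi-$(\eps, 2\gamma)$-dense in red, of size $\eps^{s-1}\beta^{s-2} |R| \geq 2^{-O(\Delta\log^2\Delta)} |R|$, hence still of size at least, say, $2^{4\Delta\log\Delta} k$ by the hypothesis $|R| \geq 2^{256\Delta\log^2\Delta}k$. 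Symmetrically obtain $V_B \subseteq B$, bi-$(\eps,2\gamma)$-dense in blue, of comparable size. Now embed $H$ into the red graph on $V_R$ using Lemma~\ref{lem:embedding-lemma} with all $V_i = V_R$ and the independent set $I$; the condition $\gamma^{\Delta - r}|V_R| - 2r\eps|G| \geq k$ holds because $|V_R|$ is exponentially larger than $k$ and $\eps|V_R|$ is comparably tiny. This gives a red $H_R$ in which the image of each $i \in I$ has at least $\gamma^\Delta |V_R| - k$ red aliases inside $V_R$. Do the same in $V_B$ to get a blue $H_B$ with each $i\in I$-image having $\gamma^\Delta|V_B| - k$ blue aliases in $V_B$.

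The one remaining point — and the main obstacle — is that $H_R \subseteq R$ and $H_B \subseteq B$ are disjoint, so to form a graph on exactly $2k-\alpha$ vertices we must make the two copies share their $I$-images, which forces those $\alpha$ vertices to lie in $R\cap B = \emptyset$. The fix is to \emph{not} fix the $I$-images in one of the two copies up front: embed $H_R - I$ inside $R$ (using a slight variant of Lemma~\ref{lem:embedding-lemma}, stopping before the last $\alpha$ vertices) so that the set of legal images for each $i\in I$ — i.e.\ the red common neighbourhood of the already-embedded red neighbours of $i$, intersected with the appropriate candidate set — has size at least $\gamma^\Delta|V_R| \gg k$, \emph{and} these candidate sets for distinct $i\in I$ can be realised simultaneously since $I$ is independent so the constraints don't interfere; likewise embed $H_B - I$ inside $B$ leaving candidate sets of size $\geq \gamma^\Delta |V_B|$ for each $i\in I$. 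Now each $i \in I$ must be placed at a single vertex that is simultaneously a good red-candidate w.r.t.\ $H_R-I$ and a good blue-candidate w.r.t.\ $H_B-I$; but a priori such a vertex would need to lie in both $R$ and $B$. This is the genuine difficulty, and the resolution is that we should instead only have embedded \emph{one} of the two $(H-I)$-parts, say $H_R - I$ inside $R$, fixed once and for all, and then embed the \emph{entire} $H$ as $H_B$ inside $V_B$; the $2k-\alpha$-vertex graph is $(H_R - I) \cup H_B$ with $I$-images taken from $H_B$, and we must check that $H_R - I$ can be glued on, i.e.\ that the $\alpha$ chosen vertices $w_1,\dots,w_\alpha \in V_B$ (images of $I$ in $H_B$) have, in \emph{red}, the right adjacencies to the embedded $H_R - I$. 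That again fails because $w_j \in B$ need not be red-adjacent to anything useful in $R$. I think the correct and standard resolution, which I would carry out, is the one implicit in the paper's own framework: build the tie inside a single bi-dense set by working in the red graph on $V_R$ to get $H_R$ with aliases, observe each alias gives an alternative copy, and then \emph{within the alias set} of the $I$-images find a blue $H$ — this is exactly where $n$ large is used and where Lemma~\ref{lem:embedding-lemma} is applied a second time, now to the blue graph restricted to (the union of) the alias sets, which are large enough ($\geq \gamma^\Delta|V_R| - k \gg k$) to host a blue $H$ unless they are red-$H$-free, contradicting... — and iterating this dichotomy, using $|R|,|B|$ exponentially larger than $k$ at each halving, terminates with an $H$-tie. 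I expect the write-up to hinge on correctly setting up this dichotomy (red $H$ in the alias pool, or blue $H$ in it) and on verifying the numeric inequality $\gamma^\Delta |V_R| \geq 2^{O(\Delta\log^2\Delta)} k$ survives a constant number of applications of Lemmas~\ref{lem:dense-to-bi-dense} and~\ref{lem:embedding-lemma}.
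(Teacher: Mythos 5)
Your opening steps — extracting a bi-$(\eps,2\gamma)$-dense $V_R \subseteq R$ (red) and $V_B \subseteq B$ (blue) via Lemma~\ref{lem:greedy-embedding} and Lemma~\ref{lem:dense-to-bi-dense}, with Lemma~\ref{lem:embedding-lemma} as the embedding engine — match the paper exactly. But the part that actually produces a tie is missing, and the fix you settle on cannot work. You correctly diagnose the obstruction (a red $H$ in $R$ and a blue $H$ in $B$ are vertex-disjoint, so there is nothing to identify), but your chosen resolution is to embed a red $H$ in $V_R$ with large alias sets for the $I$-images and then look for a blue $H$ \emph{inside those alias sets}. Those alias sets are subsets of $V_R \subseteq R$, and the hypothesis of the lemma says $K[R]$ contains no blue $H$. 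So the blue half of the tie simply cannot be found there, the "dichotomy" you allude to has no second horn, and the argument stalls. Aliases are the right tool for Lemma~\ref{lem:efficient-absorbers}; here they are a red herring.

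The missing idea is to analyse the coloured \emph{bipartite} graph $K[R',B']$ between the two bi-dense pieces $R' \subseteq R$, $B' \subseteq B$. The paper runs the dichotomy there. If $K[R',B']$ is bi-$(\eps',2\gamma)$-dense in both colours, then the auxiliary graph consisting of red edges inside $R'$, red edges across $[R',B']$, and blue edges inside $B'$ is bi-$(2\eps',\gamma/2)$-dense, and one embeds the full $(2k-\alpha)$-vertex structure in one shot via Lemma~\ref{lem:embedding-lemma}: a copy of $H-I$ into $R'$ and a copy of $H$ into $B'$, linked so that the red copy of $H$ reaches across into $B'$. Otherwise there are $R_1 \subseteq R'$, $B_1 \subseteq B'$ of order $\eps' m$ with, say, low red density between them; most of $B_1$ then has few red neighbours in $R_1$, a blue copy of $H-I$ is found there, and Lemma~\ref{lem:embedding-lemma} is applied in $K[R']$ with the candidate sets $V_i$, $i\in I$, taken to be the (large) blue common neighbourhoods in $R_1$ of the appropriate $\Delta$-subsets of that blue $H-I$. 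Either way the tie genuinely straddles $R$ and $B$ — one colour's copy of $H$ lies on one side, the other colour's copy reaches across the bipartition — which is exactly what your single-side construction can never achieve.
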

\begin{proof}
  We first introduce some parameters. Let
  \[
    \gamma = \frac{1}{32\Delta}, \qquad s = \log(32\Delta), \qquad \beta =
    \frac{1}{8s}, \qquad \eps' =
    \Big(\frac{\gamma}{4}\Big)^{\Delta}\frac{1}{8\Delta}, \qquad \text{and}
    \qquad \eps = \gamma^\Delta\frac{\eps'}{8\Delta}.
  \]
  Let $N = 2^{256\Delta\log^2\Delta}k$. As per usual, the assumption on
  complements being $H$-free implies (via Lemma~\ref{lem:greedy-embedding}) $R$
  is $((2\eps)^s\beta^{s-1},4\gamma)$-dense in red, and similarly $B$ in blue.
  This sets us up for an application of Lemma~\ref{lem:dense-to-bi-dense} to
  obtain $R' \subseteq R$ and $B' \subseteq B$, both of order $m \geq
  \eps^{s-1}\beta^{s-2}N$, and both bi-$(\eps,2\gamma)$-dense in their
  respective colours. Note $m \geq 2\eps^{-1}r(H)$
  (see~\eqref{eq:ramsey-bound}).

  If $K[R', B']$ is not bi-$(\eps',2\gamma)$-dense in one of the colours, say
  red, then we find an $H$-tie as follows. Let $R_1$ and $B_1$ be sets of order
  $\eps' m$ with red density less than $2\gamma$. Let $B_2 \subseteq B_1$ be the
  vertices whose red degree into $R_1$ is at most $4\gamma|R_1|$, which implies
  $|B_2| \geq |B_1|/2 \geq r(H)$. Therefore, there is a blue copy $H'$ of $H-I$
  in $B_2$. As every $i \in I$ has at most $\Delta$ neighbours in $H$ and every
  vertex in $B_2$ has at most $4\gamma|R_1|$ red neighbours in $R_1$, every set
  of $\Delta$ vertices in $H'$ has at least $(1-4\Delta\gamma)|R_1| \geq
  |R_1|/2$ blue common neighbours in $R_1$ that can be used to map $i$ to. Use
  these as $V_i$ for every $i \in I$, and set $V_j = R_1$ for all $j \notin I$.
  We apply Lemma~\ref{lem:embedding-lemma} to $K[R']$ and since
  \[
    \gamma^\Delta \cdot \eps' m/2 - 2\Delta\eps m \geq k
  \]
  we obtain a red copy of $H$ in $R'$ that together with $H'$ forms an $H$-tie.

  Otherwise, $K[R',B']$ is bi-$(\eps',2\gamma)$-dense in both colours and so a
  simple (but tedious) computation implies that the graph obtained as the union
  of red edges in $R$', red edges between $R'$ and $B'$, and blue edges in $B'$,
  is bi-$(2\eps',\gamma/2)$-dense. To obtain an $H$-tie we now find a red $H -
  I$ in $R'$ and, joined to it by red edges, a blue $H$ in $B'$. This is done by
  applying Lemma~\ref{lem:embedding-lemma} with $\gamma/4$ (as $\gamma$) to this
  graph with $V_i = R'$ for all $i \in H-I$ in the red copy of $H$ and $V_j =
  B'$ for $j \in H$ in the blue copy of $H$. The fact that
  \[
    (\gamma/4)^\Delta m - 4\Delta\eps' m \geq 2k
  \]
  confirms that this can be done.
\end{proof}

With this, we can show that there exists a \emph{critical colouring} for $nH$,
that is a colouring of a complete graph on $r(nH)-1$ vertices avoiding a
monochromatic $nH$, that contains a lot of structure. In fact, it is `canonical'
in a way that it is almost identical to the colouring from the lower bound in
Proposition~\ref{prop:symmetric-lower-bound}, with an addition of a rather small
exceptional set $E$.

\begin{theorem}\label{thm:structure-in-critical-colouring}
  Let $H$ be a $k$-vertex graph with no isolated vertices and maximum degree
  $\Delta$. Then, provided $n \geq 2^{O(\Delta\log^2\Delta)} k$, there exists a
  $2$-colouring of $K_{r(nH)-1}$ with no monochromatic $nH$ and the following
  structure. There is a partition of the vertex set into $R$, $B$, and $E$, such
  that:
  \begin{itemize}
    \item $|E| \leq r(H)$ and $|R|, |B| \geq k(|E|+1)$;
    \item all edges inside $R$ are red and all edges inside $B$ are blue;
    \item all edges between $R$ and $B$ are of the same colour, all edges
      between $R$ and $E$ are blue, and all edges between $B$ and $E$ are red;
    \item there is no $H$-tie containing a vertex of $E$.
  \end{itemize}
\end{theorem}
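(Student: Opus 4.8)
The plan is to start with any critical colouring of $K_{r(nH)-1}$, i.e.\ a $2$-colouring with no monochromatic $nH$, and gradually massage it into the claimed canonical form by a sequence of local recolouring moves, each of which does not create a monochromatic $nH$ while increasing some monovariant (say, the total number of red edges, or more robustly a weighted count). First I would extract a large monochromatic-neighbourhood structure: since $r(nH)-1 \geq (2k-\alpha)n - 1$ is huge, one can greedily pull out many disjoint monochromatic copies of $H$ in one colour and use Lemma~\ref{lem:H-tie-lemma} together with the $H$-tie mechanism to argue that the leftover after removing a red clique $R$ and a blue clique $B$ must be small --- of order at most $r(H)$ --- for otherwise one finds both enough red copies and enough blue copies to assemble a monochromatic $nH$ via ties. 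This pins down $E$ as the ``exceptional'' remainder and gives $|E| \leq r(H)$.

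Next I would argue the clean edge structure. Having fixed $R$ and $B$ as (inclusion-)maximal sets inducing monochromatic cliques in red and blue respectively (more precisely, sets with no blue $H$ inside $R$ and no red $H$ inside $B$, then cleaned up via Lemma~\ref{lem:greedy-embedding} to be genuinely monochromatic after a recolouring that cannot help the adversary), the bipartite part $[R,B]$ must be monochromatic: if it contained, say, both $\geq \eps' m$-sized red-dense and blue-dense patches, Lemma~\ref{lem:H-tie-lemma}'s argument would produce an $H$-tie with red part in $R$ and blue part in $B$, which combined with the abundance of disjoint monochromatic copies in each of $R$, $B$ yields a monochromatic $nH$ --- contradiction. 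A short density/pigeonhole step then upgrades ``dense'' to ``complete'' in the favourable colour (a near-monochromatic bipartite graph of this size contains a complete monochromatic bipartite graph large enough to still host the construction, and the few wrong-coloured edges can be flipped without damage). The edges between $E$ and $R$, and between $E$ and $B$, are forced to be blue and red respectively by maximality of $R$ and $B$: a red edge from $e \in E$ into $R$ together with the red clique $R$ and the embedding lemma would let us absorb $e$ into a red copy of $H$ living mostly in $R$, contradicting that $e$ was left in $E$; symmetrically for $B$. After all these moves the remaining freedom (colours of edges inside $E$, and the at-most-$r(H)$ vertices of $E$ themselves) is irrelevant to the global structure, and we simply keep whatever colouring survives --- it still has no monochromatic $nH$ because every move preserved that property, and $|R|,|B| \geq k(|E|+1)$ follows by comparing $|R|+|B| \geq r(nH)-1-|E| \geq (2k-\alpha)n - O(r(H))$ against the $2^{O(\Delta\log^2\Delta)}k$ lower bound on $n$.

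Finally, the last bullet --- no $H$-tie meeting $E$ --- is the cleanest to obtain: if some $H$-tie $T$ used a vertex $v \in E$, then because $|R|,|B| \geq k(|E|+1)$ and the bipartite part is monochromatic we could find $n-1$ further disjoint monochromatic copies of $H$ (in the colour of the $[R,B]$ edges, distributing copies inside $R$, inside $B$, and across), and $T$ would complete an $n$-th copy in each colour, forcing a monochromatic $nH$; so we simply delete $v$ from the picture and repeat until no such tie remains --- but since removing vertices of $E$ only shrinks $E$ and keeps the other properties, this terminates, and the surviving colouring (possibly on slightly fewer than $r(nH)-1$ vertices --- here one re-inflates by adding the removed vertices back into, say, $R$, which is harmless) is the desired one. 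I expect the \textbf{main obstacle} to be the bookkeeping in the first step: carefully quantifying how many disjoint monochromatic copies of $H$ one can guarantee in $R$ and in $B$ simultaneously, so that together with a single $H$-tie one genuinely reaches $n$ copies in \emph{some} colour --- this is exactly where the hypothesis $n = 2^{O(\Delta\log^2\Delta)}k$ is consumed (via Lemma~\ref{lem:H-tie-lemma}), and getting the constants and the interaction with the exceptional set $E$ right, rather than any single clever idea, is where the real work lies.
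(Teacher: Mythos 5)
Your proposal identifies the right high-level theme (extract a dense red part $R$, a dense blue part $B$, a small exceptional set $E$, control $H$-ties, and recolour), but it is missing the central mechanism of the paper and several of the steps that make the argument actually close, so I do not think it constitutes a correct proof.

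The most serious gap is that \textbf{absorbers do not appear anywhere in your outline}, yet they are what makes the whole theorem work. The paper builds, via Lemma~\ref{lem:efficient-absorbers}, an $(H,r(H))$-absorber $A_r$ in red and $A_b$ in blue (and later, a second-stage $(\hat H, r(H,\hat H))$-absorber in the appropriate colour, once the majority colour of the $H$-ties is known). These absorbers are what allow the counting argument at the end: one must show that \emph{if the recoloured graph $c'$ contains a monochromatic $nH$, then so did the original colouring $c$}. That transfer argument is the crux of the theorem. It requires tiling $B_5 \cup A_b$ entirely by blue copies of $H$ and $R_5 \cup A_r$ by red copies of $\hat H$ (extended into $B$), which one can only do because of the absorbing property; without absorbers you simply cannot account for the leftovers after greedily pulling copies of $H$, and the count of disjoint copies falls just short of $n$. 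Your sketch replaces this whole mechanism with a phrase about "assembling a monochromatic $nH$ via ties", but the $H$-ties only give you a monochromatic copy for each tie --- they do not help you tile the residue of $R$ and $B$ themselves.

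Second, the \textbf{shape of the argument is not a sequence of local recolouring moves tracked by a monovariant}. The paper fixes the final colouring $c'$ in one shot (after the decomposition into $R_4, B_4, E_4, T$, $A_r$, $A_b$) and proves a global implication: any monochromatic $nH$ in $c'$ lifts to one in $c$. A monovariant/local-move scheme is superficially attractive but has a real obstruction: after you recolour $[E,R]$ to blue and $[E,B]$ to red, you must be sure the recolouring did not \emph{create} a monochromatic $nH$, and that is exactly as hard as the original theorem. Your proposal asserts "every move preserved that property" without justification, but the justification is the entire body of the proof (Claims~\ref{cl:red-degree-across}, \ref{cl:neighbourhood-structure}, \ref{cl:no-H-tie-new-colouring} and the final copy-counting paragraph).

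Third, the argument you give for the $E$-to-$R$ edges being blue (a red edge from $e\in E$ into the red clique $R$ would let you absorb $e$ into a red copy of $H$, contradicting maximality) does not hold up: a single red edge from $e$ does not produce a new disjoint red copy of $H$ you can add to an existing family of $n-1$ copies, nor does it contradict any maximality you have enforced. The truth is weaker: the original colouring only guarantees (Claim~\ref{cl:neighbourhood-structure}) that vertices of $E_4$ have \emph{few} (at most $q$) red neighbours in $R_4$ and few blue neighbours in $B_4$, and the rest of the edges get \emph{recoloured}; the no-$nH$ property of the recoloured graph is then proved, not obtained by fiat.

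Finally, the "delete vertices of $E$ causing ties and re-inflate by adding them into $R$" step is not sound: adding a deleted vertex back into $R$ (and recolouring its edges accordingly) can itself introduce new $H$-ties or new monochromatic copies, and you give no argument that this is harmless. In the paper, the property that no $H$-tie in $c'$ meets $E$ (Claim~\ref{cl:no-H-tie-new-colouring}) is not enforced by pruning but is \emph{derived}: such a tie would force an $H$-join in the original colouring, contradicting the defining property of $E_4$ (which was obtained by iteratively moving all $H$-ties meeting the exceptional set into $T$). You would need to redo that argument rather than handwave with a deletion/re-inflation loop.

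In short: you have the right skeleton (critical colouring $\to$ large $R$ and $B$ of opposite extremal types $\to$ small $E$ controlled by $H$-ties $\to$ recolour), but the proof is missing the absorbers, the two-stage absorber selection after the majority tie colour is determined, the degree-control claims that license the recolouring, and, most importantly, the closing argument that a monochromatic $nH$ in the recoloured graph yields one in the original. These are not just bookkeeping; they are where the theorem actually gets proved.
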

\begin{proof}
  Consider an arbitrary critical $2$-colouring $c$ of the complete graph $K$ on
  $N = r(nH)-1$ vertices. Let $\alpha = \alpha(H)$, $I$ be an independent set
  achieving it, and $\hat H = H - I$. Recall that $\alpha$ satisfies
  $\frac{k}{\Delta+1} \leq \alpha \leq (1-\frac{1}{\Delta+1})k$. Let $q :=
  2^{256\Delta\log^2\Delta} k$, having in mind that we could apply
  Lemma~\ref{lem:H-tie-lemma} to sets of order $q$.

  Proposition~\ref{prop:symmetric-lower-bound} implies $N \geq n(2k-\alpha) - 2
  \geq nk + nk/(\Delta+1) - 2$. Remove a maximal collection of disjoint red
  copies of $H$ and note that the remainder is of order at least $nk/(2\Delta)$.
  Take a half of these vertices and, as they form no blue $H$, apply
  Lemma~\ref{lem:efficient-absorbers} to the red subgraph induced by it to
  obtain a set $R_0$ with $|R_0| \geq nk/(4\Delta) \cdot
  2^{-200\Delta\log^2\Delta} \geq nk/2^{203\Delta\log^2\Delta}$ and an
  $(H,r(H))$-absorber $A_r$ for it of order at most $q/2^8$. Similarly, by
  taking a largest collection of red copies of $H$, we find a set $B_0$ disjoint
  from $R_0$ of order at least $nk/2^{203\Delta\log^2\Delta}$ and an
  $(H,r(H))$-absorber $A_b$ for it of order at most $q/2^8$. If
  necessary, remove some vertices for $|R_0| = |B_0|$ to hold (for technical
  reasons).

  Since $R_0$ does not contain a blue $H$, $B_0$ does not contain a red $H$, and
  $|R_0|, |B_0| \geq q$, we can apply Lemma~\ref{lem:H-tie-lemma} to find an
  $H$-tie with red part in $R_0$ and blue part in $B_0$. Repeat doing this for
  as long as possible, and afterwards keep only the $H$-ties in the majority
  colour, say red. This majority determines the colour between
  final sets $[R,B]$ as promised by the theorem, but more importantly the colour
  of the absorber we need for the rest of the proof. So, at this point the
  `whole' copies of $H$ lie in the blue part and the $H$-absorber in red is
  useless (if the majority is blue we discard the blue absorber).

  Let $R_1$ be the union of $k-\alpha$ vertices of each of these ties that
  belong to $R_0$, and likewise $B_1$ the union of $k$ vertices of these ties
  that belong to $B_0$. Consequently, $|B_1| \geq (|B_0|-q)/2$ and $|R_1| =
  |B_1|/k \cdot (k-\alpha)$, and so, for some $m_1 \geq
  n/2^{205\Delta\log^2\Delta}$, we have $|R_1| = (k-\alpha)m_1$ and $|B_1| =
  km_1$.

  From what remains in $K - (R_1 \cup B_1 \cup A_b)$ extract a maximal
  collection of disjoint $H$-ties into a set $T$, and denote the remainder by
  $E_1$. A property used several times in what is to come is that a
  $\frac{k}{2k-\alpha}$-fraction of $T$ can always be covered by copies of $H$
  in the same colour (either all red or all blue).

  \begin{claim}\label{cl:exceptional-set-size}
    $|E_1| \leq 12\Delta q = o(m_1)$\footnote{This is another instance of
      abusing the asymptotic notation. Since $12\Delta q$ does not depend on
      $n$, and $m_1 \geq n/2^{122\Delta\log^2\Delta}$, we implicitly mean here
      that we can always choose the constant in the term for $n$ so that
    $12\Delta q$ is much smaller than $m_1$.}.
  \end{claim}
  \begin{proof}
    Suppose towards contradiction this is not the case. We first show that if we
    can cover at least $\frac{k}{2k-\alpha}(|E_1| + |A_b|)$ vertices of $E_1$
    with copies of $H$ in one of the colours then there exists an $nH$ in that
    colour. Indeed, by construction we can always cover a
    $\frac{k}{2k-\alpha}$-fraction of both $R_1 \cup B_1$ and $T$ by copies of
    $H$ in the same colour. Putting it together, this means we can find at least
    $N/(2k-\alpha) \geq n - 2/(2k-\alpha) > n-1$ copies of $H$ all in one
    colour---a contradiction.

    In case such a cover for $E_1$ does not exist, while also using $\alpha \leq
    (1-\frac{1}{\Delta+1})k$, there is an $R' \subseteq E_1$ of order
    \[
      |E_1| - \frac{k}{2k-\alpha}(|E_1| + |A_b|) \geq
      \frac{k-\alpha}{2k-\alpha}|E_1| - |A_b| \geq \frac{|E_1|}{4\Delta} -
      \frac{q}{2^8} \geq 2q
    \]
    that does not contain a blue $H$. Analogously, there is a $B' \subseteq E_1$
    of order at least $2q$ without a red $H$. Choosing disjoint subsets of $R'$
    and $B'$ if necessary (and in the process losing at most a half of vertices
    in each), we can apply Lemma~\ref{lem:H-tie-lemma} to find an $H$-tie
    contradicting the definition of $E_1$.
  \end{proof}

  Set $Q := \varnothing$ and for as long as there is a vertex in $R_1$ with blue
  degree at least $r(H)$ into $B_1 \setminus Q$, find a blue copy of $H$ that
  contains this vertex and otherwise lies in $B_1 \setminus Q$, and move its
  vertices to $Q$. We claim that $Q$ cannot become too large in this process.

  \begin{claim}\label{cl:red-degree-across}
    $|Q| \leq k|E_1|$.
  \end{claim}
  \begin{proof}
    Suppose towards contradiction this is not the case. In particular, there are
    $|E_1|$ copies of blue $H$ in $Q$, each of which has exactly one vertex in
    $R_1$ and exactly $k-1$ vertices in $B_1$. By the absorbing property of
    $A_b$ and as $B_1$ has no red $H$, we can cover the vertices remaining in
    $B_1 \cup A_b$ with blue copies of $H$ up to a leftover of order $k-1$. In
    total, we have covered at least $|E_1| + |B_1| + |A_b| - (k-1)$ vertices of
    $R_1 \cup B_1 \cup A_b$ by blue copies of $H$. Additionally, a
    $\frac{k}{2k-\alpha}$-fraction of vertices in $T$ can be covered by blue
    copies of $H$ by construction. Using the fact that $|B_1|/k =
    |R_1|/(k-\alpha)$, we get at least
    \begin{multline*}
      |E_1| + |B_1| + |A_b| - (k-1) + \frac{k}{2k-\alpha}|T| \geq \\
      |E_1| + |A_b| - (k-1) + \frac{k}{2k-\alpha}\big(|T| + |B_1 \cup
      R_1|\big) \geq \frac{k}{2k-\alpha} N > k(n-1)
    \end{multline*}
    (here we implicitly use that $\frac{k-\alpha}{2k-\alpha}|A_b| \geq k-1$)
    vertices covered with blue copies of $H$---a contradiction.
  \end{proof}

  Let $R_2$ and $B_2$ denote what remains in $R_1$ and $B_1$ after removing $Q$,
  and $E_2 := E_1 \cup Q$. Note that $|R_2| \geq |R_1| - |E_1| \geq
  (1-o(1))|R_1|$ and similarly $|B_2| \geq (1-o(1))|B_1|$. Therefore, $R_2$ is
  sufficiently large (and still blue $H$-free) for
  Lemma~\ref{lem:efficient-absorbers} to provide a set $R_3 \subseteq R_2$ of
  order at least
  \[
    |R_2|/2^{200\Delta\log^2\Delta} \geq |R_1|/2^{201\Delta\log^2\Delta} =
    (k-\alpha) m_1/2^{201\Delta\log^2\Delta}
  \]
  and an $(\hat H,r(H,\hat H))$-absorber for it of order at most $q/2^8$. With a
  slight abuse of notation, we let $A_r$ stand for this absorber from now on
  (recall, the $H$-absorber we previously had in red is gone). Set $B_3 := B_2$
  and move the remaining vertices of $R_2$ (the ones not used for $R_3$ and
  $A_r$) into $E_2$, By additionally removing arbitrary vertices from both $R_3$
  and $B_3$ if necessary, suppose $|R_3| = (k-\alpha)m$ and $|B_3| = km$, for
  some $m \geq m_1/2^{201\Delta\log^2\Delta}$. In particular, $r(H) = o(m)$
  (again abusing asymptotic notation and recalling~\eqref{eq:ramsey-bound}). Let
  $E_3$ be what remains of $E_2$ after moving all $H$-ties within it into $T$.

  To recapitulate, we have obtained a partition of $K$ into $R_3 \cup A_r \cup
  B_3 \cup A_b \cup E_3 \cup T$, where $R_3$ and $B_3$ are still reasonably
  large, no vertex in $R_3 \cup A_r$ has more than $r(H)$ blue neighbours into
  $B_3$, $T$ is a collection of $H$-ties, and $E_3$ contains no $H$-ties.

  \begin{claim}\label{cl:exceptional-set-size-again}
    $|E_3| \leq 12\Delta q = o(m)$.
  \end{claim}
  \begin{proof}
    The proof is identical to that of Claim~\ref{cl:exceptional-set-size}, with
    one key difference. Since $A_r$ is now an $(\hat H,r(H,\hat H))$-absorber
    for $R_3$, we need to show that we can cover a
    $\frac{k}{2k-\alpha}$-fraction of $R_3 \cup B_3$ by red copies of $H$ (for
    blue copies this is identical to what we did before). As a start, by the
    absorbing property, we can cover all but at most $k-1$ vertices in $R_3 \cup
    A_r$ by red copies of $\hat H$. We would like to extend every such copy to a
    red $H$ by using $\alpha$ vertices of $B_3$, which would imply we can cover
    at least
    \[
      |R_3| + \alpha \cdot \frac{|R_3|}{k-\alpha} \geq \frac{k}{2k-\alpha}|R_3
      \cup B_3|
    \]
    vertices with red copies of $H$ as desired. Recalling the definition of $Q$,
    there are at most $(k-\alpha)r(H)$ vertices in $B_3$ that have a blue edge
    towards any fixed set of $k-\alpha$ vertices in $R_3$. As in total we use at
    most
    \[
      \frac{|R_3|}{k-\alpha} \cdot \alpha \leq \frac{\alpha}{k} |B_3|,
    \]
    vertices, and $(1-\alpha/k)|B_3| = (k-\alpha)m \geq (k-\alpha)r(H)$ (with
    room to spare), we can greedily extend each such a copy of $\hat H$ to a red
    $H$.
  \end{proof}

  In the next phase, repeatedly add any $H$-tie from $R_3 \cup B_3 \cup E_3$
  that has at least one vertex in $E_3$ into $T$. Once we cannot do this,
  denote by $R_4$, $B_4$, and $E_4$ the remaining sets, respectively. Since
  $|E_3| = o(m)$, we have $|R_4| \geq |R_3| - o(km)$, $|B_4| \geq |B_3| -
  o(km)$, and $|E_4| \leq |E_3| \leq 12\Delta q$.

  The following claim tells us that the vertices in $E_4$ have `small' red
  degree into $R_4$ and blue degree into $B_4$, getting us almost to the desired
  structure.

  \begin{claim}\label{cl:neighbourhood-structure}
    Every $v \in E_4$ has red degree into $R_4$ and blue degree into $B_4$ both
    bounded by $q$.
  \end{claim}
  \begin{proof}
    Let $R'$ be the neighbours of $v$ in $R_4$ in red and similarly $B'$ the
    ones in $B_4$ in blue. Assume $|R'| \geq q$. Our goal is to show there is an
    $H$-tie containing $v$, contradicting the definition of $E_4$. If $|B'|
    \geq q$, then Lemma~\ref{lem:H-tie-lemma} gives us an $H$-tie with red part
    in $R'$ and blue part in $B'$. W.l.o.g.\ we may assume that the edges
    between are red. If so, we can take $v$ and $k-1$ vertices from the blue
    copy in $B'$ so that the independent set $I$ of the red copy of $H$ is
    contained inside of these $k-1$ vertices (possible as $\alpha < k$). Adding
    the $k-\alpha$ vertices from the red part in $R'$ forms an $H$-tie: it still
    contains a blue $H$ and a red $H-I$ in $R'$ that extends to a red $H$ into
    $B'$. In the remaining case, i.e.\ if $|B'| < q$, the red neighbourhood of
    $v$ in $B_4$ is larger than $q$ and Lemma~\ref{lem:H-tie-lemma} finds an
    $H$-tie in $R', B_4 \setminus B'$. By replacing a vertex in the red part by
    $v$ we again have an $H$-tie containing $v$---a contradiction.
  \end{proof}

  For ease of reference, for any vertex $v \in E_4$, we call its blue
  neighbourhood in $R_4$ and its red neighbourhood in $B_4$ its \emph{good}
  neighbourhood in $R_4, B_4$, respectively.

  With these preparations at hand, we are finally ready to define a new
  colouring $c'$ of $K$ and sets $R$, $B$, and $E$ as in the statement of the
  theorem. First, the exceptional set remains the same, that is $E := E_4$. Let
  $R$ be the union $R_4 \cup A_r$ and $k-\alpha$ vertices in each $H$-tie in
  $T$. Similarly, $B$ is defined as the union $B_4 \cup A_b$ and the remaining
  $k$ vertices in each $H$-tie in $T$. As for $c'$, we keep the same colouring
  of $E$ as in $c$, $R$ is coloured fully in red, $B$ fully in blue, and all the
  edges in $K[R, B]$ are red, $K[E, R]$ blue, and $K[E, B]$ red. Moreover, $|R|,
  |B| \geq (k-\alpha)m - o(km) \geq k(|E|+1)$. This is exactly what is promised
  by the theorem, it remains only to show that there is no $H$-tie using a
  vertex in $E$ and if there is a monochromatic $nH$ in $c'$ then there is one
  in original colouring as well. Lastly, these imply $|E| \leq r(H) = o(n)$, as
  otherwise a monochromatic copy of $H$ in $E$ together with either $k-\alpha$
  vertices in $R$ or $k-\alpha$ vertices in $B$ form an $H$-tie intersecting
  $E$.

  \begin{claim}\label{cl:no-H-tie-new-colouring}
    There is no $H$-tie in $c'$ that uses a vertex of $E$.
  \end{claim}
  \begin{proof}
    We show that if such a tie exists, then it existed in the original
    colouring, which would be a contradiction with the defining property of $E$.
    Let \emph{$H$-join} be a $2$-coloured graph consisting of a red copy of $H$,
    a disjoint blue copy of $H$, and a complete bipartite graph between them
    coloured red. Observe that an $H$-tie as above is then a subset of $E$ and
    an $H$-join with red part in $R$ and blue part in $B$. To see this, note
    that by definition of an $H$-tie any red edge in it belongs to the red copy
    of $H$, and therefore the red part of the $H$-tie in $R$ can be extended to
    a red copy of $H$. Analogous reasoning holds for blue. These copies of $H$
    are fully joined by red edges from $[R, B]$, completing the $H$-join as
    wanted.

    Let $E' \subseteq E$ be the vertices in this $H$-tie intersecting $E$, and
    so $|E'| \leq 2k-\alpha$. Let $R' \subseteq R_4$ and $B' \subseteq B_4$ be
    the common good neighbourhoods of $E'$ in the original colouring. Hence, by
    Claim~\ref{cl:neighbourhood-structure} it follows that $|R'| \geq |R_4| -
    2kq \geq (k-\alpha)m - o(km) - 2kq \geq (k+1)r(H)$ and $|B'| \geq |B_4| -
    2kq \geq (k+1)r(H)$. In view of Claim~\ref{cl:red-degree-across}, there is a
    red $H$ in $R'$ whose vertices all have at most $r(H)$ blue neighbours in
    $B'$. This implies their common red neighbourhood in $B'$ is at least $r(H)$
    which gives a blue copy of $H$ inside it completing the $H$-join as desired.
  \end{proof}

  Suppose there exists a monochromatic $nH$ in the new colouring $c'$. We claim
  that for any copy of $H$ in it, there is a corresponding monochromatic copy in
  $c$ using the same vertices in $E = E_4$ and the same number of vertices in
  $R_4$ and $B_4$ as it does in the new colouring in $R$ and $B$.

  Consider first a copy of $H$ intersecting $E$, and let $E'$ denote this
  intersection. Such a copy is contained in the union of $E'$ and an $H$-join
  with red part in $R$ and blue part in $B$ as before. Hence, it suffices to
  find for it an $H$-join in $c$ whose red and blue parts belong to the good
  common neighbourhoods of $E'$, and is additionally disjoint from other
  corresponding copies of $H$. Using Claim~\ref{cl:neighbourhood-structure},
  $E'$ has a common blue neighbourhood $R' \subseteq R_4$ of order at least
  $|R_4| - kq - k|E|$ in $R_4$, and a common red neighbourhood $B' \subseteq
  B_4$ of order $|B_4| - kq - k|E|$, both disjoint from other corresponding
  copies of $H$ (this is where $k|E|$ comes from). Since $|R_4| - kq - k|E| \geq
  r(H)$, there is a red $H$ in $R'$. Lastly, every vertex of such $H$ has at
  most $r(H)$ blue neighbours in $B_4$ by Claim~\ref{cl:red-degree-across}, and
  so at least $|B_4| - kq - k|E| - kr(H) \geq r(H)$ red common neighbours in
  $B'$. Thus, we find a blue $H$ in this common neighbourhood. As both
  colourings $c$ and $c'$ are the same on $E'$, there is a corresponding copy of
  $H$ in the original colouring as desired.

  Let us remove all copies of $H$ intersecting $E$ in the new colouring and all
  the corresponding copies of $H$ in the old colouring. In the old colouring,
  let $R_5$ and $B_5$ be the sets of remaining vertices from $R_4$ and $B_4$,
  and let $t$ be the number of $H$-ties in $T$. With this, all the remaining
  copies of $H$ from the monochromatic $nH$ in $c'$ now belong to $R \cup B$. In
  particular, taking into account the number of vertices used in the removed
  copies, these remaining copies of $H$ have only $r = |R_5| + |A_r| +
  t(k-\alpha)$ vertices of $R$ and $b = |B_5| + |A_b| + tk$ vertices of $B$ to
  be found among.

  Moreover, there are at most $\floor{r/(k-\alpha)}$ red copies of $H$ on those
  vertices, as any such copy must use at least $k-\alpha$ vertices of $R$
  (recall, $B$ is fully blue). Similarly, there are at most $\floor{r/k}$ blue
  copies of $H$. We aim to find this many red and blue copies of $H$ in $R_5
  \cup A_r \cup B_5 \cup A_b \cup T$ in our original colouring $c$ as this
  would then complete the proof. In fact, since we can find exactly $t$ copies
  of $H$ in any colour in $T$, our task is reduced to finding $\floor{|R_5 \cup
  A_r|/(k-\alpha)}$ red copies and $\floor{|B_5 \cup A_b|/k}$ blue copies in
  $R_5 \cup A_r \cup B_5 \cup A_b$.

  Since $B_5 \subseteq B_3 \subseteq B_0$ and $A_b$ is an $(H,r(H))$-absorber
  for $B_0$ in blue, we can find a $H$-tiling of $B_5 \cup A_b$: any set of
  $r(H)$ vertices in $B_5$ contains a blue $H$ (as $B_0$ and thus $B_5$ is red
  $H$-free) with the remaining fewer than $r(H)$ vertices then getting absorbed
  into an $H$-tiling by $A_b$. As $A_r$ is an $(\hat H,r(H,\hat H))$-absorber
  for $R_5 \subseteq R_3$ in red, we can similarly find an $\hat H$-tiling of
  $R_5 \cup A_r$. What remains to be done is to, for each such red $\hat H$,
  find $\alpha$ common neighbours in $B_5$ to complete a red $H$. This is easily
  achieved, as by Claim~\ref{cl:red-degree-across} there are at most
  $(k-\alpha)r(H)$ vertices in $B_5$ with blue edges to some $k-\alpha$ vertices
  of $R_5 \cup A_r \subseteq R_3$, and at any point we used up at most
  \[
    \frac{|R_5|}{k-\alpha} \cdot \alpha \leq \frac{|R_3|}{k-\alpha} \cdot \alpha
    = \frac{\alpha}{k} \cdot |B_3|.
  \]
  As (using $\alpha \leq (1-\frac{1}{\Delta+1})k$ once again),
  \[
    |B_5| \geq |B_3| - o(km) - k|E| \geq |B_3| - o(km) \geq \frac{\alpha}{k}
    |B_3| + (k-\alpha)r(H)
  \]
  there is a desired collection of $\floor{|R_5|/(k-\alpha)}$ red copies of $H$,
  finally completing the proof.
\end{proof}

Note that from the proof we can recover even better bounds on the order of sets
$|R|, |B|$, depending on the colour of the `monochromatic edges' in $[R,B]$.
Under the assumption that the edges in $[R,B]$ are red (otherwise, everything is
the same mutatis mutandis),
\[
  |B| = \frac{k}{2k-\alpha}\Big(N - |E| - |A_r| - |A_b|\Big) \geq nk - o(nk)
  \geq \alpha n + k(|E|+1).
\]
That said, in exactly the same way as in \cite{bucic2021tight} (for that matter,
also very similar to \cite{burr1987ramsey})
Theorem~\ref{thm:structure-in-critical-colouring} immediately implies
Theorem~\ref{thm:main-symmetric}. We omit the proof.

Tightness in dependence of $n$ on $k = |H|$ is the highlight of
Theorem~\ref{thm:main-symmetric}, but in order to discuss it we unfortunately
need additional notation. Recall, $\cD(H)$ is defined as a collection of graphs
obtained by removing any maximal independent set of $H$. Let $\cD'(H) \subseteq
\cD(H)$ be those obtained by removing any maximum independent set. Now, let us
define $\cD_c$ and $\cD_c'$ as the collection of all the connected components of
the elements of $\cD(H)$ and $\cD'(H)$, respectively. The following assertion
follows analogously as \cite[Theorem~13]{bucic2021tight} and is a corollary of
Theorem~\ref{thm:structure-in-critical-colouring}.

\begin{proposition}\label{prop:tight-bounds}
  Let $H$ be a connected $k$-vertex graph with maximum degree $\Delta$. Then,
  provided $n \geq 2^{O(\Delta\log^2\Delta)}k$,
  \[
    r(\cD_c(H), \cD(H)) - 2 \leq r(nH) - (2k-\alpha)n \leq r(\cD_c'(H),
    \cD'(H)) - 2.
  \]
\end{proposition}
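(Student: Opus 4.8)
The plan is to leverage Theorem~\ref{thm:structure-in-critical-colouring} in exactly the way Buci\'{c} and Sudakov derive \cite[Theorem~13]{bucic2021tight}. Recall that theorem supplies, for $n$ large, a critical colouring of $K_{r(nH)-1}$ with a canonical structure: a partition $R \cup B \cup E$ where $R$ is monochromatically red, $B$ monochromatically blue, $[R,B]$ is a single colour (say red, without loss of generality), $[E,R]$ is blue, $[E,B]$ is red, $|E| \le r(H)$, and no $H$-tie meets $E$. From the remark following that theorem we moreover have the quantitative bounds $|B| = \frac{k}{2k-\alpha}(N - |E| - |A_r| - |A_b|)$ and $|R| = \frac{k-\alpha}{2k-\alpha}(N - \cdots)$ where $N = r(nH)-1$; in particular $|B| = km - O(km/n \cdot \text{stuff})$ is close to $nk$ and $|R|$ close to $(k-\alpha)n$, but the precise slack is what we must track to pin down the additive constant $r(nH) - (2k-\alpha)n$.

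For the \emph{lower bound} $r(nH) - (2k-\alpha)n \ge r(\cD_c(H),\cD(H)) - 2$: I would exhibit an explicit colouring of $K_{(2k-\alpha)n + r(\cD_c(H),\cD(H)) - 3}$ with no monochromatic $nH$. Take the canonical lower-bound colouring from Proposition~\ref{prop:symmetric-lower-bound} on sets $R, B$ with $|R| = (k-\alpha)(n-1)$ and $|B| = k(n-1)$, all of $R$ red, all of $B$ blue, $[R,B]$ red, and then \emph{attach} a small gadget $E$ realising a critical colouring for the Ramsey problem $r(\cD_c(H),\cD(H))$ — i.e.\ a $2$-colouring of $K_{r(\cD_c(H),\cD(H))-1}$ with no red member of $\cD_c(H)$ and no blue member of $\cD(H)$ — joining $E$ to $R$ in blue and to $B$ in red. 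The counting point is that any red $H$ must, after deleting a red independent set, leave a red graph in $\cD(H)$; its components can only live in $R$ or $E$ (since $B$ is all blue and $[R,B]$, $[E,R]$ force colours), and the part in $E$ forms a red member of $\cD_c(H)$ — impossible unless it is empty, so essentially all of each red $H$ lies in $R \cup B$ using $\ge k-\alpha$ vertices of $R$; symmetric reasoning for blue $H$ using $\ge k$ vertices of $B$. A short optimisation over how many red versus blue copies one tries to pack, exactly as in \cite[Theorem~13]{bucic2021tight}, shows one cannot reach $n$ monochromatic copies, giving the bound. One must be careful with the additive $-2$ versus $-3$ bookkeeping and with the boundary components of $\cD(H)$ meeting $E$.

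For the \emph{upper bound} $r(nH) - (2k-\alpha)n \le r(\cD_c'(H),\cD'(H)) - 2$: start from the critical colouring $c'$ of $K_{r(nH)-1}$ given by Theorem~\ref{thm:structure-in-critical-colouring}. The key observations, using that $E$ meets no $H$-tie and the monochromatic structure of $R, B$: any red copy of $H$ either lies (essentially) in $R \cup B$ consuming at least $k - \alpha$ vertices of $R$, or meets $E$ in a way that, after removing a maximum independent set, forces a red member of $\cD_c'(H)$ inside $E$; likewise blue copies meeting $E$ force a blue member of $\cD'(H)$ inside $E$. Since the colouring inside $E$ avoids a monochromatic $nH$ (indeed the whole $K$ does), and since $|R|, |B|$ are just barely large enough — here is where the refined size estimate $|B| \ge \alpha n + k(|E|+1)$ and its $R$-analogue enter — any failure to have $r(nH) > (2k-\alpha)n + r(\cD_c'(H),\cD'(H)) - 2$ would force the restriction of $c'$ to $E$ to be a colouring of a $K_{\ge r(\cD_c'(H),\cD'(H))}$ with no red $\cD_c'(H)$ and no blue $\cD'(H)$, contradicting the definition of $r(\cD_c'(H),\cD'(H))$. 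Turning this into the stated inequality is a matter of carefully counting, for a hypothetical monochromatic $nH$, how the $k-\alpha$-versus-$k$ vertex budgets in $R$ and $B$ interact with the $|E|$ copies that can be salvaged through $E$, precisely mirroring the computation in \cite{bucic2021tight}.

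The main obstacle I anticipate is not any single hard idea but the precise combinatorial accounting of the additive constants: one needs to show that the ``$-2$'' is exactly right, which means matching the leftover vertices in $R$ (there are $|R| \bmod (k-\alpha)$ of them, up to the absorber sizes $|A_r|, |A_b|$) and in $B$ against the copies that can route through $E$, and checking that the bi-dense absorbers $A_r, A_b$ (of size $o(n)$, hence negligible against $k(|E|+1)$) do not perturb the count. Since Theorem~\ref{thm:structure-in-critical-colouring} has already done the genuinely hard analytic work of producing the canonical colouring with a tiny exceptional set, this proposition should reduce to faithfully transcribing the argument of \cite[Theorem~13]{bucic2021tight}, which is why the authors state it follows ``analogously''. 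I would therefore present it as: (i) recall the structure theorem and the refined size bounds from the remark; (ii) prove the lower bound by the explicit gadget construction plus a packing count; (iii) prove the upper bound by extracting from any near-critical colouring a forbidden colouring of $E$; and (iv) reconcile the constants. I expect to be able to compress (ii)--(iv) substantially by citing the parallel steps in \cite{bucic2021tight}.
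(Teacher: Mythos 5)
Your plan coincides with the paper's: the paper gives no proof of this proposition and simply asserts that it ``follows analogously as \cite[Theorem~13]{bucic2021tight} and is a corollary of Theorem~\ref{thm:structure-in-critical-colouring}'', which is precisely the structure-theorem-plus-gadget-plus-packing-count route you outline. One concrete catch in your lower-bound sketch: with $|R| = (k-\alpha)(n-1)$, $|B| = k(n-1)$ and $|E| = r(\cD_c(H),\cD(H))-1$ the total is $(2k-\alpha)(n-1) + r(\cD_c(H),\cD(H)) - 1$, which is $(2k-\alpha)-2$ vertices short of the required $(2k-\alpha)n + r(\cD_c(H),\cD(H)) - 3$; you want $|R| = (k-\alpha)n - 1$ and $|B| = kn - 1$ as in Proposition~\ref{prop:symmetric-lower-bound}, after which the count comes out right.
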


The above statement is very useful in a way that if the Ramsey numbers in the
upper and lower bound are the same then the proposition gives an exact formula
for $r(nH)$ (giving a precise constant $c = c(H)$ in
Theorem~\ref{thm:main-symmetric}). In fact, this is often the case, and examples
for which the bounds are not tight are difficult to find (for more details
see~\cite{bucic2021tight}). As mentioned by Burr~\cite{burr1987ramsey}, an
important class of graphs $H$ for which the bounds coincide are the ones that
have a vertex $v$ whose $N_H(v)$ is contained in some maximum independent set of
$H$ (e.g.\ all bipartite graphs and cycles of length at least four). For any
such $H$, the collections $\cD_c(H)$ and $\cD_c'(H)$ contain an isolated vertex,
and thus both Ramsey numbers above evaluate to one! In other words $r(nH) =
(2k-\alpha)n - 1$ as soon as $n$ is large enough for the long term behaviour to
kick in.

Let $C_k^2$ be the square\footnote{A square $H^2$ of a graph $H$ is obtained by
adding an edge between every two vertices at distance at most two in $H$.} of a
cycle on $k = 3\ell + 4$ vertices for some integral $\ell \geq 4$. Fix any
vertex, remove all edges with both endpoints in its neighbourhood, and call the
obtained graph $H_k$. Clearly, $\Delta(H_k) = 4$, and $\alpha(H_k) = \ell+3$
(see Figure~\ref{fig:square-cycle}).

\begin{figure}[!htbp]
  \centering
  \includegraphics[scale=1]{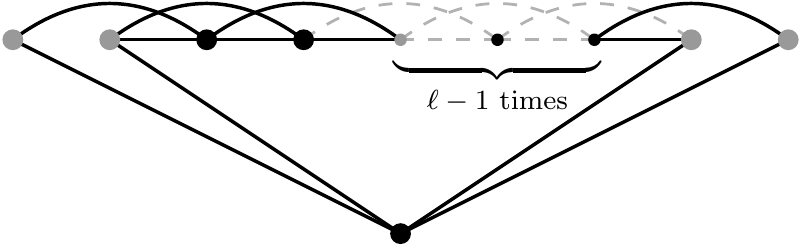}
  \caption{The graph $H_k$ obtained from $C_k^2$ with the dashed part repeated
  $\ell-1$ times and its maximum independent set (greyed out vertices).}
  \label{fig:square-cycle}
\end{figure}

We show below that if $n < \ell/2 = O(k)$, then $r(nH_k) \geq
(2k-\alpha(H_k))n$. On the other hand, our graph has a vertex whose
neighbourhood belongs to a maximum independent set, and thus $r(nH_k) =
(2k-\alpha(H_k))n-1$ by Proposition~\ref{prop:tight-bounds} as soon as $n$ is
`large enough'. This implies that in order for \eqref{eq:long-term-behaviour} to
hold for bounded degree graphs, it is necessary for $n$ to have a linear
dependence on $k$, as in Theorem~\ref{thm:main-symmetric}.

\begin{proposition}\label{prop1}
  For $n \leq \ell/2 - 1$ we have $r(nH_k) \geq (2k-\alpha(H_k))n$.
\end{proposition}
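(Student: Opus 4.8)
The statement is a lower bound, so the plan is to construct a $2$-colouring of $K_N$ with $N=(2k-\alpha(H_k))n-1=(5\ell+5)n-1$ having no monochromatic $nH_k$. The Burr--Erd\H{o}s--Spencer colouring of Proposition~\ref{prop:symmetric-lower-bound} only accounts for $N-1$ vertices; to gain the last vertex I would exploit the rigidity of $H_k$, and this is where the hypothesis $n\le\ell/2-1$ enters. Concretely, write $0$ for the distinguished vertex of $H_k$ and use the natural cyclic labelling of $C_k^2$. The first task is to verify the (elementary) structural facts: $H_k$ has a \emph{unique} maximum independent set, namely $I=N_{H_k}(0)\cup\{5,8,\dots,3\ell-1\}$; in particular $0\notin I$ (any independent set through $0$ has size at most $\ell+1<\alpha(H_k)$), $N_{H_k}(0)\subseteq I$, and $H_k-I$ is the disjoint union of the single vertex $0$ and a path on the remaining $k-\alpha(H_k)-1=2\ell$ vertices. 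These follow directly from the description of $H_k$ and the structure of maximum independent sets of squares of paths.

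Next I would define the colouring. Partition $V(K_N)=Q_1\sqcup Q_2\sqcup B$ with $|Q_1|=n+1$, $|Q_2|=2\ell n-1$ and $|B|=kn-1$ (these sum to $N$ since $k-\alpha(H_k)=2\ell+1$). Colour red every edge inside $Q_1$, inside $Q_2$, and between $Q_1\cup Q_2$ and $B$; colour blue every edge inside $B$ and between $Q_1$ and $Q_2$. For the blue side the argument is immediate: the blue graph is the disjoint union of a clique on $B$ and a complete bipartite graph with parts $Q_1,Q_2$, and since $H_k$ contains a triangle it is not bipartite, so every blue copy of $H_k$ lies inside $B$; hence there are at most $\lfloor(kn-1)/k\rfloor=n-1$ disjoint blue copies, and no blue $nH_k$.

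The heart of the proof is ruling out a red $nH_k$. Suppose $F_1,\dots,F_n$ are vertex-disjoint red copies of $H_k$ and set $a_i=|F_i\cap Q_1|$, $b_i=|F_i\cap Q_2|$, $c_i=|F_i\cap B|$. Since $B$ carries no red edge, $F_i\cap B$ is independent in the $i$-th copy, so $c_i\le\alpha(H_k)$; disjointness gives $\sum_i(a_i+b_i)\le|Q_1|+|Q_2|=(2\ell+1)n=(k-\alpha(H_k))n$, hence $\sum_i c_i\ge\alpha(H_k)n$, forcing $c_i=\alpha(H_k)$ for every $i$. By the structural facts each $F_i$ then meets $B$ in exactly $I$, and $F_i\setminus B$ induces $H_k-I$, an isolated vertex together with a path on $2\ell$ vertices; because there is no red edge between $Q_1$ and $Q_2$, the path-component of $F_i\setminus B$ lies entirely in one of $Q_1,Q_2$, whence $a_i\in\{0,1,2\ell,2\ell+1\}$ for each $i$. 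Finally $\sum_i a_i\le|Q_1|=n+1$ and $\sum_i b_i\le|Q_2|=2\ell n-1$, combined with $\sum_i(a_i+b_i)=(2\ell+1)n$, force $\sum_i a_i=n+1$; but a sum of $n$ numbers each in $\{0,1,2\ell,2\ell+1\}$ is either at most $n$ (if all are $\le 1$) or at least $2\ell$, and $n\le\ell/2-1$ gives $n<n+1<2\ell$ — a contradiction. Hence $K_N$ has no monochromatic $nH_k$, so $r(nH_k)\ge N+1=(2k-\alpha(H_k))n$.

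The arithmetic in the last step is routine, so the main obstacle is the structural input: proving that $I$ is the unique maximum independent set and that $H_k-I$ is exactly an isolated vertex plus a path. That rigidity is what restricts the ``efficient'' red copies — the ones the counting forces to use a full maximum independent set in $B$ — to precisely the four $Q_1$-sizes $\{0,1,2\ell,2\ell+1\}$, and the assumption $n\le\ell/2-1$ (equivalently $n+1<2\ell$, i.e.\ a linear-in-$k$ bound on $n$) is exactly the range in which the attainable partial sums $\{\,2\ell j+q:0\le j\le n,\ 0\le q\le n\,\}$ skip the forced value $n+1$. If one wanted to push past this range, the intervals $[2\ell j,\,2\ell j+n]$ would start to overlap and the obstruction would disappear, consistent with the long-term value $(2k-\alpha(H_k))n-1$ from Proposition~\ref{prop:tight-bounds}.
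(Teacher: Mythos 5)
Your proof is correct, and it takes a genuinely different route from the paper's. The paper's construction is a three-block colouring $R\cup B\cup E$ with $|R|=(k-\alpha-2)n-1$, $|B|=kn-1$, $|E|=\ell$, where $R$ is a red clique, $B$ and $E$ are blue cliques, $[R,B]$ and $[E,B]$ are red, and $[R,E]$ is blue; a blue copy touching $E$ would need more than $|E|$ vertices of $E$ (since $R$ is blue-independent), and each red copy meets $E$ in at most two vertices because removing \emph{any} independent set from $H_k$ leaves at most two isolated vertices, forcing at least $k-\alpha-2$ vertices in $R$ and hence at most $n-1$ red copies. You instead replace the small blue clique $E$ by a small red clique $Q_1$ of order $n+1$ blue-joined to a large red clique $Q_2$, rule out blue copies by non-bipartiteness of $H_k$, and on the red side use the sharper rigidity fact that $I$ is the \emph{unique} maximum independent set and $H_k-I$ is exactly one isolated vertex plus a path on $2\ell$ vertices, so that $a_i\in\{0,1,2\ell,2\ell+1\}$ and the forced equality $\sum_i a_i=n+1$ becomes impossible. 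Your structural claims are true (any maximum independent set must contain all of $N_{H_k}(0)$, after which the remainder is the unique maximum independent set of $P_{3\ell-5}^2$), though both they and the paper's ``at most two isolated vertices'' assertion are left at the level of a readily checkable observation. A pleasant by-product of your version is that it in fact precludes a red $nH_k$ for all $n\le 2\ell-2$, a wider range than the $n\le(\ell-1)/2$ that the paper's construction supports, whereas the paper's argument requires slightly less structural information about $H_k$.
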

\begin{proof}
  Let $R$ be a red clique of order $(k-\alpha-2)n-1$, $B$ a blue clique of order
  $kn-1$, and $E$ a blue clique of order $\ell$. Colour all edges in $[E,R]$
  blue, $[E,B]$ red, and $[R,B]$ red. Note that the obtained complete graph $K$
  has
  \[
    (k-\alpha-2)n - 1 + kn - 1 + \ell = (2k-\alpha)n + \ell - 2(n+1) \geq
    (2k-\alpha)n - 1,
  \]
  vertices. It is obvious that $K$ has no blue $nH_k$ as $B$ can hold at most
  $n-1$ blue copies of $H_k$, and every blue copy intersecting $E$ must do so in
  at least $k-\alpha > \ell$ vertices. On the other hand, $K$ cannot have a red
  $nH_k$ either. Indeed, every red copy of $H_k$ can intersect $B$ only in an
  independent set and $E$ only in some isolated vertices obtained after
  embedding an independent set into $B$. Since almost all vertices have an edge
  in their neighbourhood, it is easy to see that the largest such collection of
  independent vertices is of order two (obtained by removing a maximal
  independent set including the special vertex). Thus, every red $H_k$ has to
  intersect $R$ in at least $k-\alpha-2$ vertices. Consequently, there are at
  most $kn-1$ of those.
\end{proof}

\subsection{Graphs with a given number of edges}

In a similar fashion to Theorem~\ref{thm:structure-in-critical-colouring}, if
$H$ is a graph with $m$ edges, we can provide a bound on $n$ that depends only
on $m$. The main ingredient is again a lemma for existence of $H$-ties, which is
a bit more involved this time around.

\begin{lemma}\label{lem:H-tie-lemma-edges}
  Let $H$ be a graph with $m$ edges and no isolated vertices. Given a
  $2$-coloured complete graph $K$ on vertices $R \cup B$ such that $K[R]$ has no
  blue $H$, $K[B]$ has no red $H$, and $|R|, |B| \geq 2^{O(\sqrt{m}\log^2m)}$,
  there exists an $H$-tie with red part in $R$ and blue part in $B$.
\end{lemma}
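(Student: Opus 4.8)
The plan is to mimic the proof of Lemma~\ref{lem:H-tie-lemma} almost verbatim, with the single conceptual change that the role of the maximum degree $\Delta$ is taken over by the \emph{degeneracy} of $H$. Recall that a graph with $m$ edges is $D$-degenerate for $D := \lceil\sqrt{2m}\rceil$ (otherwise its $D$-core would have minimum degree $>D$ and hence more than $\binom{D+1}{2}>m$ edges), so $H$ admits a vertex ordering in which every vertex has at most $D$ neighbours preceding it. The first step is therefore to record a \emph{degeneracy version} of the embedding tool: if $G$ is bi-$(\eps,2\gamma)$-dense and $V_i\subseteq V(G)$, $i\in[k]$, satisfy $\gamma^{D}|V_i|-2\deg_H(i)\cdot\eps|G|\geq k$ for every $i$, then $H$ embeds into $G$ with each $i$ landing in $V_i$ (and with the alias count of Lemma~\ref{lem:embedding-lemma} if one fixes an independent set $I$). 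The proof is the same induction as Lemma~\ref{lem:embedding-lemma}: processing the vertices in a degeneracy ordering, the candidate set of a not-yet-embedded vertex $j$ shrinks by a factor $\gamma$ \emph{only for each already-embedded neighbour of $j$}, of which there are at most $D$, so it never drops below $\gamma^{D}|V_j|$ regardless of the (possibly huge) total degree of $j$; the total degree of $i$ enters only through the number $\deg_H(i)$ of \emph{forward} neighbours whose $\leq 2\eps|G|$ ``bad'' vertices must be avoided when placing $i$, which is harmless provided $\eps$ is polynomially small in $m$. Likewise Lemma~\ref{lem:greedy-embedding} has a degeneracy analogue: a graph of density at least $1-1/(8D)$ on sufficiently many vertices contains every $D$-degenerate $k$-vertex graph.

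With $D$ in place of $\Delta$, set $\gamma=1/(32D)$, $s=\log(32D)$, $\beta=1/(8s)$, $\eps'=(\gamma/4)^{D}/(8Dm)$ and $\eps=\gamma^{D}\eps'/(8Dm)$ (the extra $1/m$ factors accommodate the forward-neighbour term above); all of these are $2^{-O(\sqrt m\log m)}$ except $s=\Theta(\log m)$ and $\beta=\Theta(1/\log m)$. As in Lemma~\ref{lem:H-tie-lemma}, the hypothesis that $K[R]$ has no blue $H$ together with the degeneracy version of Lemma~\ref{lem:greedy-embedding} shows $R$ is $((2\eps)^s\beta^{s-1},4\gamma)$-dense in red, and symmetrically $B$ in blue; Lemma~\ref{lem:dense-to-bi-dense} then yields $R'\subseteq R$ and $B'\subseteq B$, both of order $m_*\geq\eps^{s-1}\beta^{s-2}N=2^{-O(\sqrt m\log^2 m)}N$, that are bi-$(\eps,2\gamma)$-dense in red, resp.\ blue. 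Since $N\geq 2^{O(\sqrt m\log^2 m)}$ and $k\leq 2m$, I may assume $m_*$ is as large as needed, in particular $m_*\geq 2^{O(\sqrt m\log m)}k$ and $m_*\geq\eps^{-1}r(H)$, the latter using Sudakov's bound $r(H)\leq 2^{O(\sqrt m)}$ from~\cite{sudakov2011conjecture}.

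Now run the same case analysis. Let $I$ be a maximum independent set of $H$ and $\hat H=H-I$; note $\hat H$ is $D$-degenerate with at most $m$ edges. If $K[R',B']$ fails to be bi-$(\eps',2\gamma)$-dense in one colour — say red, so that the blue bipartite graph between suitable $\eps'm_*$-sized sets $R_1\subseteq R'$, $B_1\subseteq B'$ has density $>1-2\gamma$ — I first pass to subsets making this super-density hold per vertex on both sides, and let $\tilde G$ be the graph on $R_1\cup B_1$ formed by the red edges inside $R_1$, the blue edges between $R_1$ and $B_1$, and the blue edges inside $B_1$; a short computation shows $\tilde G$ is bi-$(2\eps',\gamma/4)$-dense. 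Otherwise $K[R',B']$ is bi-$(\eps',2\gamma)$-dense in both colours and, exactly as in Lemma~\ref{lem:H-tie-lemma}, the graph on $R'\cup B'$ made of the red edges inside $R'$, the red edges between, and the blue edges inside $B'$ is bi-$(2\eps',\gamma/2)$-dense; call it $\tilde G$ again. In either case the target is the abstract $H$-tie: a red copy of $H$ and a blue copy of $H$ overlapping exactly in the $\alpha(H)$ vertices of a copy of $I$, which as an uncoloured graph is a graph $\tilde H$ on $2k-\alpha$ vertices with at most $2m$ edges, hence $O(\sqrt m)$-degenerate. Prescribing the candidate sets $V_i$ to be the ``$R'$-side'' for the $\hat H$-part of $\tilde H$ and the ``$B'$-side'' for its blue-$H$-part (using $R_1,B_1$ in place of $R',B'$ in the unbalanced case), the degeneracy embedding lemma applied to $\tilde G$ with parameter $\gamma/4$ produces the desired tie, since $(\gamma/4)^{O(\sqrt m)}m_*-O(m)\,\eps' m_*\geq 2k$ by the choice of parameters.

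The step I expect to be the genuine obstacle — and the reason the lemma is ``more involved'' — is precisely the embedding hidden in the previous paragraph when $H$ has high-degree vertices, in particular a high-degree vertex lying in the maximum independent set $I$ whose neighbours are themselves of high degree. The bounded-degree route of first embedding $\hat H$ and then attaching each $i\in I$ to the images of its neighbours through a density-$(1-O(\gamma))$ bipartite graph breaks down: a vertex of degree $\gg 1/\gamma$ cannot be attached to all its neighbours simultaneously when the available bipartite density is only $1-O(\gamma)$, and no averaging or dependent-random-choice trick rescues this. The fix is exactly the unified embedding into the single bi-dense graph $\tilde G$ via the \emph{degeneracy} embedding lemma: there the candidate set of a vertex shrinks only per already-placed neighbour, so it stays above $\gamma^{D}|V_i|$ with $D=O(\sqrt m)$ no matter how large individual degrees are, while the cumulative effect of large degrees is confined to the additive forward loss $O(m)\cdot\eps|\tilde G|$, negligible once $\eps$ is taken a factor $\mathrm{poly}(m)$ smaller than in the bounded-degree proof. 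Everything else — the averaging steps producing the per-vertex-controlled subsets, the verification that the combined graphs are bi-dense, and the bookkeeping ensuring the two monochromatic copies share precisely a maximum independent set — is routine and parallels Lemma~\ref{lem:H-tie-lemma}.
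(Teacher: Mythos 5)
Your plan is genuinely different from the paper's. The paper does \emph{not} upgrade the embedding machinery to handle degeneracy; instead, it peels off the roughly $\sqrt{m}/2$ vertices of largest degree from $H-I$ and from $I$, places them into four iteratively-constructed monochromatic cliques of order $O(\sqrt m)$ (each with large common neighbourhoods of the appropriate colours), and then applies the bounded-degree $H$-tie lemma (Lemma~\ref{lem:H-tie-lemma}) to the remaining graph $H'$, which by construction has maximum degree $\leq 8\sqrt m$. Your route tries to keep the whole graph in play by swapping $\Delta$ for degeneracy $D=O(\sqrt m)$ in the embedding lemma, which is an appealing idea, but it has a concrete gap.

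The gap is in the unbalanced case of your argument. You claim that after a minimum-degree cleanup, the graph $\tilde G$ on $R_1\cup B_1$ (red inside $R_1$, blue across, blue inside $B_1$) is bi-$(2\eps',\gamma/4)$-dense. This is false at the scale you need. The blue bipartite graph $[R_1,B_1]$ only has density $>1-2\gamma$ and, after filtering, minimum degree $\geq(1-4\gamma)|B_1|$; that gives positive bipartite density only for pairs $X\subseteq R_1$, $Y\subseteq B_1$ with $|Y|\geq \Omega(\gamma|B_1|)$. Since you need bi-density at scale $2\eps'|\tilde G|=4\eps'|R_1|$ and $\eps'=(\gamma/4)^D/(8Dm)\ll\gamma$, there can be subsets $X\subseteq R_1$, $Y\subseteq B_1$ of the required size with no blue edge between them. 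So the unified embedding of the abstract $H$-tie graph $\tilde H$ into $\tilde G$ via your degeneracy embedding lemma is not licensed. You correctly identify that the paper's Case 1 trick (taking the common blue neighbourhood of the $\leq\Delta$ $B$-side neighbours of each $i\in I$) cannot survive when degrees are unbounded; but the proposed replacement does not establish what it needs, and it is exactly to avoid this that the paper removes the high-degree vertices first. (Your degeneracy version of the embedding lemma itself is fine, and the balanced case goes through as you describe, since there $K[R',B']$ really is bi-dense in red. It's only the unbalanced case that breaks.)
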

\begin{proof}
  Let $k := |H|$. First, let us establish a simple property that comes in handy
  almost immediately. Let $R' \subseteq R$ and $B' \subseteq B$ be sets of order
  at least $2^{O(\sqrt{m}\log^2 m)} \gg r(H)$. Observe that $K[R']$ contains at
  most $|R'|/2$ vertices with blue degree $(1-\frac{1}{2k})|R'|$ (as otherwise
  by Tur\'{a}n's theorem we would have a blue $H$). Hence, at least half of the
  vertices have red degree at least $|R'|/(2k)$. Analogously, at least half of
  vertices in $B'$ have blue degree at least $|B'|/(2k)$. Let $R''$ and $B''$ be
  sets of these high degree vertices. Now, for either of the colours, there has
  to exist $v \in R''$ that has at least $|B''|/(k+1) \geq |B'|/(2k+2)$
  neighbours in $B''$ in that colour. Indeed, if there is no such vertex, there
  is a red $H$ in $R''$ (as $|R''| \geq r(H)$) whose vertices have at least
  $|B''| - k|B''|/(k+1) \geq r(H)$ common neighbours in $B''$ in the other
  colour, forming an $H$-tie. Similarly, for either colour there has to exist $v
  \in B''$ with at least $|R'|/(2k+2)$ neighbours in $R''$ that colour.

  We now proceed with the proof. Our goal here is to find a red clique of order
  $\sqrt{m}/2$ in $R$ with a large common red neighbourhood in $R$ and common
  blue neighbourhood in $B$. We also find a blue clique of order $\sqrt{m}/2$ in
  $B$ with similar properties with colours reversed. The whole process is then
  iterated within the found common neighbourhoods to additionally construct two
  slightly smaller cliques, both of order $\sqrt{m}/4$. This time around, the
  cliques both have large common neighbourhoods in their respective colours
  (both red for the red clique and both blue for the blue one). All four cliques
  then serve to accommodate $\sqrt{m}/2$ vertices of large degree in $H$ when
  looking for an $H$-tie, while the rest is to be found via
  Lemma~\ref{lem:H-tie-lemma}. For a more visual explanation of the setup we aim
  for, we refer to Figure~\ref{fig:clique-gadget} below.

  \begin{figure}[!htbp]
    \centering
    \includegraphics[scale=0.6]{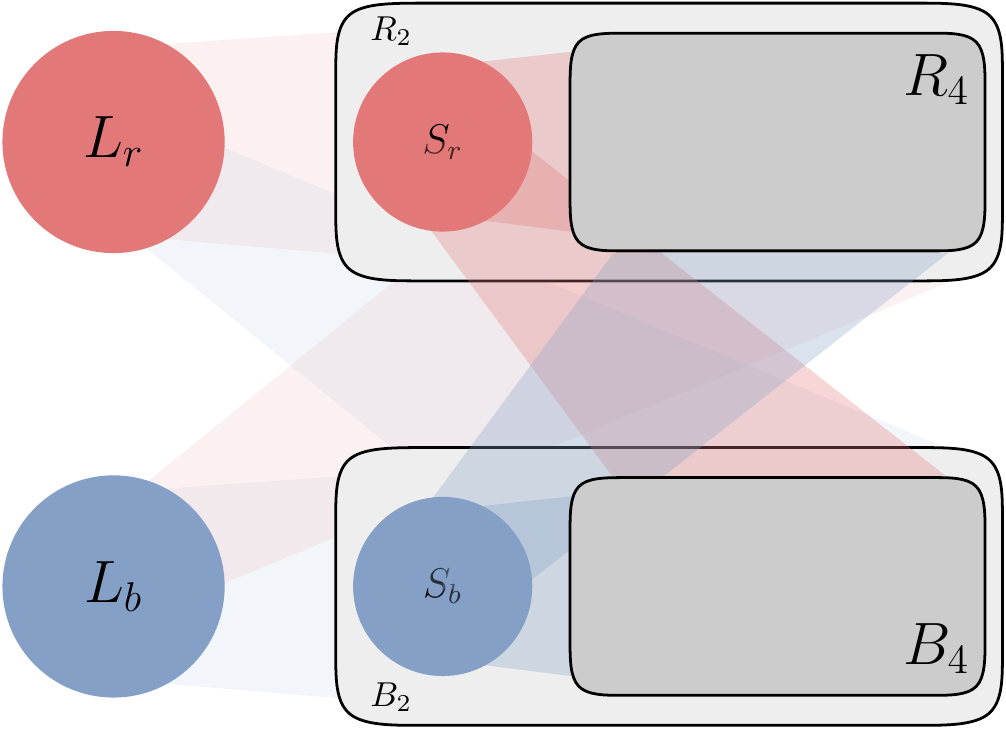}
    \caption{The setup obtained after the procedure: $L_r$ and $L_b$ are cliques
      of order $\sqrt{m}/2$ in their respective colours and $S_r$ and $S_b$ are
      cliques of order $\sqrt{m}/4$ in their respective colours; all edges in
      $[L_r,R_2]$, $[L_b,R_2]$, $[S_r,R_4]$, $[S_r,B_4]$ are red and
      $[L_r,B_2]$, $[L_b,B_2]$, $[S_b,R_4]$, $[S_b,B_4]$ are blue We plan to
      find a part of an $H$-tie in $[R_4,B_4]$ and use the cliques appropriately
      to extend it to a full $H$-tie. The edges in $[L_r,L_b]$ and $[S_r,S_b]$
    do not matter.}
    \label{fig:clique-gadget}
  \end{figure}

  The outlined aim is accomplished iteratively in the usual spirit of
  Ramsey-type arguments. Initially, set $R_0 := R$ and $B_0 := B$. Take a vertex
  $v$ in $R_0$ as from the property above, namely $v$ has at least
  $|R_0|/(2k+2)$ red neighbours in $R_0$ and at least $|B_0|/(2k+2)$ blue
  neighbours in $B_0$. Update the sets $R_0$ and $B_0$ to now denote these
  neighbourhoods. This process repeats as long as $|R_0|, |B_0| \geq
  2^{O(\sqrt{m}\log^2 m)}$, and so at least $\sqrt{m}/2$ times, constructing the
  desired red clique $L_r$ (stands for `large red').

  Let $R_1$ and $B_1$ be the common neighbourhoods in the colours as above of
  $L_r$ in $R$ and $B$, respectively. These are now of order
  \[
    |R_1| \geq |R|/(2k+2)^{\sqrt{m}/2} \geq |R| \cdot 2^{-\Omega(\sqrt{m}\log
    k)} \geq 2^{O(\sqrt m \log^2 m)},
  \]
  and similarly $|B_1| \geq 2^{O(\sqrt m\log^2 m)}$. Repeat the same process to
  find a blue clique $L_b$ in $B_1$ by starting with the newly obtained sets
  $R_1$ and $B_1$ (this time the edges across are red, see
  Figure~\ref{fig:clique-gadget}). Denote its common neighbourhoods in $R_1$ and
  $B_1$ by $R_2$ and $B_2$, respectively.

  In an analogous way, and starting with $R_2$ and $B_2$, we repeat the process
  of first finding a red clique $S_r$ (stands for `small red') in $R_2$ of order
  $\sqrt{m}/4$, with its now red common neighbourhoods $R_3 \subseteq R_2$ and
  $B_3 \subseteq B_2$, and subsequently a blue clique $S_b$ in $B_3$ of order
  $\sqrt{m}/4$ with its blue common neighbourhoods $R_4 \subseteq R_3$ and $B_4
  \subseteq B_3$ (see Figure~\ref{fig:clique-gadget}). Again, both $R_4$ and
  $B_4$ are large enough.

  Fix a largest independent set $I$ of $H$, and obtain a graph $H'$ by removing
  $\sqrt{m}/4$ vertices of largest degree from $H - I$ and $I$ each. This graph
  has maximum degree $8\sqrt{m}$. Let $I' \subseteq I$ be the remaining vertices
  which still form an independent set of $H'$. As $|R_4|, |B_4| \geq 2^{256
  \cdot 8\sqrt{m}\log^2(8\sqrt{m})}$, by Lemma~\ref{lem:H-tie-lemma} there
  exists\footnote{We cannot guarantee which independent set is used in the
  lemma, but the proof is exactly the same if we fix an independent set
  beforehand.} an $H'$-tie with, say, red edges across. Such an $H'$-tie then
  consists of a red $H'-I'$ in $R_4$ and blue $H'$ in $B_4$. To obtain an
  $H$-tie with red edges across, we extend the red $H'-I'$ by $S_r$ (consists of
  exactly $\sqrt{m}/4$ vertices) and, noting that all the edges in $[L_b,R_4]$
  are red, the blue $H'$ by $L_b$ (consists of exactly $\sqrt{m}/2$ vertices).
  Hence, the number of vertices used is exactly $2|H'| - (\alpha(H) -
  \sqrt{m}/4) + 3\sqrt{m}/4 = 2|H|-\alpha(H)$. The blue copy of $H'$ in $B_4$
  together with $L_b$ contain a blue $H$ as a subgraph, and similarly the red
  copy of $H'-I'$ in $R_4$ together with $S_r$ contain a red $H-I$ as a
  subgraph. The edges across are red by construction.
\end{proof}

In order to show that there is a critical colouring for $nH$ with a good
structure, we proceed as in the proof of
Theorem~\ref{thm:structure-in-critical-colouring}. However, there are three
small differences: one is Lemma~\ref{lem:H-tie-lemma-edges} as mentioned,
another is utilising the general absorbing lemma
(Lemma~\ref{lem:absorbing-lemma-general}) instead of the one tailored for
bounded degree graphs, and lastly, we extensively made use of the fact that
$\alpha(H) \leq \Delta(H)k/(\Delta(H)+1)$, which is now only $\alpha(H) \leq
k-1$. However, noting that the latter two only incur an additional polynomial
factor of $k$ (loosely, around $k^{17}$) and as $H$ has no isolated vertices $m
\geq k/2$, these do not matter compared to the exponential
$2^{O(\sqrt{m}\log^2m)}$.

\begin{theorem}\label{thm:structure-in-critical-colouring-edges}
  Let $H$ be a graph with $m$ edges and no isolated vertices. Then, provided $n
  \geq 2^{O(\sqrt{m}\log^2 m)}$, there exists a $2$-colouring of $K_{r(nH)-1}$
  with no monochromatic $nH$ and the following structure: we can partition the
  vertex set into $R$, $B$, and $E$, such that:
  \begin{itemize}
    \item $|E| \leq r(H)$ and $|R|, |B| \geq k(|E|+1)$;
    \item all edges inside $R$ are red and all edges inside $B$ are blue;
    \item all edges between $R$ and $B$ are of the same colour, all edges
      between $R$ and $E$ are blue and all edges between $B$ and $E$ are red;
    \item there is no $H$-tie containing a vertex of $E$.
  \end{itemize}
\end{theorem}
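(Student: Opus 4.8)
The plan is to mirror the proof of Theorem~\ref{thm:structure-in-critical-colouring} essentially verbatim, substituting the three ingredients flagged in the paragraph preceding the statement. First I would fix a critical $2$-colouring $c$ of $K$ on $N = r(nH)-1$ vertices, set $\alpha = \alpha(H)$, let $I$ be a maximum independent set, and put $\hat H = H - I$. By Proposition~\ref{prop:symmetric-lower-bound} we have $N \geq n(2k-\alpha) - 2 \geq nk + n - 2$ (using $\alpha \leq k-1$), so after removing a maximal disjoint family of red copies of $H$ we are left with $\Omega(n)$ vertices spanning no blue $H$; half of these, fed to Lemma~\ref{lem:absorbing-lemma-general} (the general absorbing lemma, since we no longer have a degree bound), yield a set $R_0$ of order $\Omega(n/k^{3})$ together with an $(H, r(H))$-absorber $A_r$ for it of order $O(\sqrt{N}/k) \leq q/2^8$, where $q := 2^{O(\sqrt m\log^2 m)}k$ is chosen so that Lemma~\ref{lem:H-tie-lemma-edges} applies to sets of order $q$. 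Symmetrically, a maximal red $H$-collection gives a disjoint $B_0$ with a blue $(H,r(H))$-absorber $A_b$. Here one must check that $\Omega(n/k^{3}) \geq q$ — this is where the polynomial-in-$k$ overhead of the general absorber is absorbed into $2^{O(\sqrt m\log^2 m)}$, using $m \geq k/2$.

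Next I would run the $H$-tie extraction exactly as before: since $R_0$ has no blue $H$, $B_0$ has no red $H$, and both have order $\geq q$, Lemma~\ref{lem:H-tie-lemma-edges} produces an $H$-tie with red part in $R_0$ and blue part in $B_0$; iterate while possible and keep only the ties in the majority colour (say red edges across, discarding the blue absorber). Collecting $k-\alpha$ vertices per tie from $R_0$ into $R_1$ and $k$ per tie from $B_0$ into $B_1$ gives $|R_1| = (k-\alpha)m_1$, $|B_1| = km_1$ with $m_1 = n/2^{O(\sqrt m\log^2 m)}$. From the leftover $K - (R_1\cup B_1\cup A_b)$ extract a maximal disjoint family of $H$-ties into a set $T$, leaving $E_1$. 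The analogue of Claim~\ref{cl:exceptional-set-size} bounding $|E_1| = O(kq)$ goes through unchanged: if too large, after covering a $\tfrac{k}{2k-\alpha}$-fraction of $R_1\cup B_1$ and of $T$ one of the colours yields more than $n-1$ copies of $H$ (a contradiction), and otherwise $E_1$ contains subsets of order $\geq 2q$ with no blue $H$ and no red $H$ respectively, so Lemma~\ref{lem:H-tie-lemma-edges} finds a forbidden $H$-tie. The same reasoning, with $\alpha \leq k-1$ replacing $\alpha \leq (1-\tfrac{1}{\Delta+1})k$, changes the constant $\tfrac{k-\alpha}{2k-\alpha}$ to $\tfrac{1}{2k-\alpha} \geq \tfrac{1}{2k}$, which only costs a polynomial factor in $k$ and is harmless.

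Then I would continue exactly along the lines of Theorem~\ref{thm:structure-in-critical-colouring}: define $Q$ by repeatedly pulling out blue copies of $H$ through vertices of $R_1$ with $\geq r(H)$ blue neighbours in $B_1 \setminus Q$, bound $|Q| \leq k|E_1|$ via the absorbing property of $A_b$ (Claim~\ref{cl:red-degree-across}); re-apply Lemma~\ref{lem:absorbing-lemma-general} to the still-blue-$H$-free $R_2$ to get $R_3$ with an $(\hat H, r(H,\hat H))$-absorber $A_r$; re-run the $E_3$-size claim (Claim~\ref{cl:exceptional-set-size-again}), where covering a $\tfrac{1}{2k-\alpha}$-fraction of $R_3\cup B_3$ in red now uses that we extend each red $\hat H$ from the absorber tiling by $\alpha$ common neighbours in $B_3$, available because fewer than $(k-\alpha)r(H)$ vertices of $B_3$ are "bad" and $(1-\alpha/k)|B_3| = (k-\alpha)m \gg (k-\alpha)r(H)$; absorb any further $H$-ties touching $E_3$ into $T$; prove the neighbourhood-structure claim (every $v\in E_4$ has red degree $\leq q$ into $R_4$ and blue degree $\leq q$ into $B_4$) using Lemma~\ref{lem:H-tie-lemma-edges}; and finally define the canonical colouring $c'$ and the partition $R = R_4\cup A_r\cup(\text{tie vertices})$, $B = B_4\cup A_b\cup(\text{tie vertices})$, $E = E_4$, then verify no $H$-tie meets $E$ in $c'$ (Claim~\ref{cl:no-H-tie-new-colouring}, via the notion of an $H$-join) and that a monochromatic $nH$ in $c'$ transfers to one in $c$ (matching copies one-by-one, using the good neighbourhoods and the absorbers for the $R\cup B$ part). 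The main obstacle, as in the original, is really just bookkeeping: making sure all the "loses only $o(km)$ vertices" and "loses only a polynomial factor in $k$" estimates compose correctly so that every set stays of order $\geq k(|E|+1)$ and $\geq r(H)$ at the point it is used — but since $H$ has no isolated vertices we have $k \leq 2m$, hence every stray polynomial factor in $k$ is swallowed by $2^{O(\sqrt m\log^2 m)}$, and there is no genuinely new difficulty beyond replacing Lemma~\ref{lem:H-tie-lemma} by Lemma~\ref{lem:H-tie-lemma-edges} and Lemma~\ref{lem:efficient-absorbers} by Lemma~\ref{lem:absorbing-lemma-general}.
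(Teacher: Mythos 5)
Your proposal matches the paper's intent exactly: the paper establishes Theorem~\ref{thm:structure-in-critical-colouring-edges} only via the remark that one reruns the proof of Theorem~\ref{thm:structure-in-critical-colouring} with the three substitutions you list (Lemma~\ref{lem:H-tie-lemma-edges} for Lemma~\ref{lem:H-tie-lemma}, Lemma~\ref{lem:absorbing-lemma-general} for Lemma~\ref{lem:efficient-absorbers}, and $\alpha\leq k-1$ in place of $\alpha\leq(1-\tfrac{1}{\Delta+1})k$), noting that the latter two cost only a $\mathrm{poly}(k)$ factor, which is swallowed by $2^{O(\sqrt m\log^2 m)}$ since $k\leq 2m$. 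You trace through the same chain of claims, so the routes coincide.

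One technical point worth tightening. Unlike Lemma~\ref{lem:efficient-absorbers}, whose output absorber has order $2^{O(\Delta\log^2\Delta)}k$ \emph{independent} of the host graph, Lemma~\ref{lem:absorbing-lemma-general} returns an absorber of order $\sqrt{N'}/k$ where $N'$ is the order of the set it is applied to. Feeding it the full $\Omega(n)$-vertex remainder, as you propose, therefore gives $|A_r|=\Theta(\sqrt{n}/k)$, which exceeds $q/2^8$ once $n$ is large, so the inequality you quote (``$O(\sqrt{N}/k)\leq q/2^8$'') fails without an upper bound on $n$ that the theorem does not impose. The fix is to apply Lemma~\ref{lem:absorbing-lemma-general} to a subset of size roughly $q^2k^2$ of those vertices: this is both $\geq 2^{32}k^{10}r(H)^2$ (so the lemma applies) and small enough that the absorber has order at most $qk/2^8\cdot k^{-1}\cdot$const, hence fits under $q/2^8$; the resulting $R_0,B_0$ then have order about $q^2/k\geq q$ rather than $\Omega(n/k^3)$, which is all that is needed, since it is $T$ rather than $R_1\cup B_1$ that carries the bulk of the vertices into the final $R$ and $B$. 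The same adjustment applies at the second invocation of the absorbing lemma (when building $A_r$ for $R_2$). This is only a bookkeeping correction, not a change of strategy, and once made the rest of your outline goes through.
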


This establishes Theorem~\ref{thm:main-symmetric-edges} in the same way as
Theorem~\ref{thm:structure-in-critical-colouring} is used to prove
Theorem~\ref{thm:main-symmetric}.

\section{Concluding remarks}\label{sec:concluding-remarks}

In this paper, we have proven several bounds for Ramsey number for the
collection of $n$ copies of a graph $H$, in various specific cases of general
interest. The main restriction we were concerned with is one of the most natural
and well-studied ones in Ramsey theory---when $H$ has maximum degree bounded by
a fixed integer $\Delta$. Looking closely at our methods, one can deduce that
the only property we require of the host graph is that it contains a large
subgraph in which any two disjoint sets of linear order induce a subgraph of
large density. This is encapsulated through the notion of
bi-$(\eps,\gamma)$-dense graphs, and implemented via,
Lemma~\ref{lem:dense-to-bi-dense}, Lemma~\ref{lem:greedy-embedding}, and
Lemma~\ref{lem:embedding-lemma}. In fact, this is the main reason why both in
the absorbing lemma (Lemma~\ref{lem:efficient-absorbers}) and the $H$-tie lemma
(Lemma~\ref{lem:H-tie-lemma}), we require $N$ to be of the order
$2^{O(\Delta\log^2\Delta)}k$.

However, if we additionally restrict $H$ to be bipartite, then we can do
somewhat better. The following result has been obtained by Fox and
Sudakov~\cite{fox2009density} and relies on a modified version of dependent
random choice (Lemma~\ref{lem:dependent-random-choice}).

\begin{theorem}[Theorem~2.3 in \cite{fox2009density}]\label{thm:bipartite-ramsey}
  Let $k \in \N$ and let $H$ be a bipartite graph on $k$ vertices with maximum
  degree $\Delta$. For $\eps > 0$, if $G = (V_1, V_2; E)$ is a bipartite graph
  with $|V_1| = |V_2| = N \geq 16\Delta\eps^{-\Delta} k$ and at least $\eps N^2$
  edges, then $H$ is a subgraph of $G$.
\end{theorem}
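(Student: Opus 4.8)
The plan is to prove this by \emph{dependent random choice} followed by a greedy vertex‑by‑vertex embedding — the standard route for sparse embedding results of this type, and indeed the argument of \cite[Theorem~2.3]{fox2009density} which we outline here. Write $H = A \cup B$ for the bipartition of $H$, so $|A| + |B| = k$; since $|V_1| = |V_2|$ we may aim to embed $A$ into $V_1$ and $B$ into $V_2$. As a harmless preprocessing step, discard from $V_1$ and $V_2$ every vertex with fewer than $\eps N/2$ neighbours on the opposite side; this destroys at most $\eps N^2/2$ edges, so at least $\eps N^2/2$ remain, and since $N \ge 16\Delta\eps^{-\Delta}k \ge 2\eps^{-1}k$ every surviving vertex now has degree at least $k$ on the other side (in particular no singleton is ``bad'' in the sense below).

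Next comes the dependent random choice step. Pick $t = \Delta$ vertices $w_1,\dots,w_\Delta \in V_2$ independently and uniformly, and let $U = N_G(w_1) \cap \dots \cap N_G(w_\Delta) \subseteq V_1$ be their common neighbourhood. By convexity of $x \mapsto x^\Delta$,
\[
  \E|U| \;=\; \sum_{u \in V_1}\Big(\frac{d_G(u)}{N}\Big)^\Delta \;\ge\; N\Big(\frac{e(G)}{N^2}\Big)^\Delta \;=\; \Omega\!\big(\eps^\Delta N\big).
\]
Call a set $S \subseteq V_1$ with $1 \le |S| \le \Delta$ \emph{bad} if it has fewer than $k$ common neighbours in $V_2$; then $\Pr[S \subseteq U] = (|N_G(S)|/N)^\Delta < (k/N)^\Delta$, so the expected number of bad sets contained in $U$ is at most $\sum_{j=1}^{\Delta}\binom{N}{j}(k/N)^\Delta$. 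By selecting the sample so as to maximise $|U|$ minus a suitably size‑weighted count of the bad sets it contains, and cleaning up the few remaining bad sets, one extracts a subset — still called $U$ — that is large enough that $|U| \ge k \ge |A|$ and in which \emph{every} $\le\Delta$ vertices have at least $k$ common neighbours in $V_2$.

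Given such a $U$, the embedding is immediate. Since every neighbour of a vertex of $A$ lies in $B$, first map $A$ into $U$ by an arbitrary injection (possible as $|U|\ge |A|$), and then process the vertices of $B$ one at a time. When embedding $b \in B$, all of its at most $\Delta$ neighbours already sit inside $\phi(A) \subseteq U$, so their images have at least $k$ common neighbours in $V_2$; as fewer than $|B| \le k$ vertices of $V_2$ have been used so far, some such common neighbour is still free and can serve as $\phi(b)$. This yields the desired copy of $H$ in $G$.

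The cleaning and the embedding itself are routine; the real obstacle is the middle step. A careless version of dependent random choice — insisting, after cleanup, that \emph{no} $\Delta$‑subset of $U$ be bad — would force $N$ to be \emph{polynomial} in $k$, because the dominant bad‑set term $\binom{N}{\Delta}(k/N)^\Delta$ is of order $k^\Delta$. Getting down to a requirement on $N$ that is only \emph{linear} in $k$ (with the stated $16\Delta\eps^{-\Delta}$ factor) is exactly where one must be careful: weighting bad sets by their size, exploiting that the governing bad‑set bound is essentially independent of $N$, and tuning the number of sampled vertices, so that a single good $U$ of size comparable to $k$ can be found. This delicate optimisation is the heart of \cite[Theorem~2.3]{fox2009density}.
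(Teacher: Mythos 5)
Note that this theorem is imported from~\cite{fox2009density} and is not proved in the present paper, so there is no in-paper argument to compare against. Your sketch has the right skeleton---dependent random choice to produce a rich set $U \subseteq V_1$, then an arbitrary injection of $A$ into $U$ followed by a greedy placement of $B$, and the final greedy step is indeed sound once such a $U$ is available---and you correctly put your finger on the obstruction. But the step you defer as a ``delicate optimisation'' is not an optimisation within the scheme you set up; that scheme fails outright at $N = \Theta(\Delta\eps^{-\Delta}k)$, and a genuinely different idea is needed.

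You aim for a $U$ of size at least $|A|$ in which \emph{every} subset of size at most $\Delta$ has at least $k$ common neighbours in $V_2$. With $t = \Delta$ samples, the expected number of bad $\Delta$-sets inside $U$ is at most $\binom{N}{\Delta}(k/N)^\Delta$, of order $k^\Delta/\Delta!$ and essentially independent of $N$ as you observe, whereas at the stated threshold $\E|U| \approx \eps^\Delta N \approx 16\Delta k$. Already for $\Delta = 2$ and $k$ moderately large the bad-set count exceeds $\E|U|$, so the usual DRC cleanup---maximise $|U|$ minus a weighted bad-set count, then delete a vertex from each surviving bad set---has negative expectation and extracts nothing. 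Reweighting by size cannot save it (any weight $c_\Delta \geq 1$, which the deletion step requires, already makes the expectation negative), and raising the number of samples $t$ only helps if $t$ grows with $k$, which destroys the linear dependence. The argument in~\cite{fox2009density} does not insist that every small subset of $U$ be rich; it settles for a much weaker property of $U$ (controlling only the \emph{proportion} of bad tuples) and pays for it with a more careful, randomised embedding of $A$ that dodges the bad tuples. That trade is exactly where the linear-in-$k$ bound is earned, and leaving it to the citation leaves your proof with a hole at the decisive point. A smaller issue: your preprocessing count is off---pruning both $V_1$ and $V_2$ of vertices of degree below $\eps N/2$ can remove up to $\eps N^2$ edges, not $\eps N^2/2$, so ``at least $\eps N^2/2$ remain'' does not follow; either prune one side only, or drop the step, since the convexity bound $\sum_u (d(u)/N)^\Delta \geq N\eps^\Delta$ needs no minimum-degree hypothesis.
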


Using this, we can in the same way prove a bipartite analogue of
Lemma~\ref{lem:efficient-absorbers} and Lemma~\ref{lem:H-tie-lemma}.

\begin{lemma}
  Let $G$ and $H$ be two bipartite graphs with maximum degree $\Delta$ and $k =
  \max\{|G|,|H|\}$. Let $K$ a graph of order $N \geq 2^{O(\Delta)}k$ whose
  complement is $G$-free. Then, there exists $X \subseteq V(K)$ of order at
  least $N/2^{O(\Delta)}$ which is
  bi-$(2^{-4\Delta-11}\Delta^{-4},2^{-1})$-dense and an $(H,r(G,H))$-absorber
  for $X$ in $K$ of order at most $2^{O(\Delta)}k$.
\end{lemma}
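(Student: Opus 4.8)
The plan is to mimic the proof of Lemma~\ref{lem:efficient-absorbers} essentially line by line, replacing every invocation of the bounded-degree toolkit with its bipartite counterpart. Concretely, wherever the original proof uses Lemma~\ref{lem:greedy-embedding} to pass from a $G$-free complement to a dense subgraph, then Lemma~\ref{lem:dense-to-bi-dense} to pass to a bi-dense subgraph, and then Lemma~\ref{lem:embedding-lemma} to embed copies of $H$, we instead use a one-step argument built on Theorem~\ref{thm:bipartite-ramsey}. The point of the bipartite embedding theorem is that it gives $N \geq 16\Delta\eps^{-\Delta}k$ rather than the $2^{O(\Delta\log\Delta)}$-type bound, and crucially it does not require $s \approx \log\Delta$ iterations of Lemma~\ref{lem:dense-to-bi-dense}; this is exactly what shaves the extra $\log\Delta$ factor off the exponent and yields $N \geq 2^{O(\Delta)}k$.

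First I would set up parameters: take $\gamma$ of order $2^{-1}$ (the density we can afford is now constant, not $1/\Delta$, because in a bipartite-type density argument we only need a positive fraction of edges between the two sides), and $\eps$ of order $\Delta^{-1}\gamma^{\Delta}$-ish, specifically tuned so that $\eps = 2^{-4\Delta-11}\Delta^{-4}$ as in the statement. Since the complement of $K$ is $G$-free, Tur\'an's theorem (or Lemma~\ref{lem:greedy-embedding}) hands us a subgraph on $\Omega(N)$ vertices of density bounded below by an absolute constant, and a standard bipartite-ization — split the vertex set in two and keep the denser side-pair — produces a balanced bipartite graph with a constant edge density. One then replaces Lemma~\ref{lem:embedding-lemma}, which in the original proof is the workhorse both for building the disjoint copies of $H$ inside $Q$ with many aliases and for the final absorption step, with a bipartite embedding-with-aliases lemma derived from Theorem~\ref{thm:bipartite-ramsey}: embed $H - I$ on one side, embed $I$ on the other side into common neighbourhoods, and observe that each image of $i \in I$ sits in a linear-sized candidate set so it has $\Omega(N)$ aliases. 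The rest of the argument — moving copies of $H$ into a set $Q$, randomly splitting into $Q_1$ by independent coin flips and applying Chernoff so that aliases survive outside $Q_1$, extracting $W_I, W_{H-I}, W_A$, proving the analogue of Claim~\ref{cl:efficient-mini-absorber} that $W \setminus X$ has an $H$-tiling for any small $X \subseteq W_I^\star$, and finally carving out the reservoir $U$ and checking it is bi-dense — goes through verbatim, with every occurrence of $\gamma^\Delta m$ and $\eps m$ reinterpreted with the new (better) parameters.

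The only genuinely new bookkeeping is making sure the arithmetic inequalities of the form ``$\gamma^{\Delta-r}|V_i| - 2r\eps|G| \geq k$'' in the bipartite embedding lemma are satisfied with the claimed $\eps = 2^{-4\Delta-11}\Delta^{-4}$, and that the final $X$ one produces is bi-$(\eps, 2^{-1})$-dense with density parameter literally $2^{-1}$ rather than $\Theta(1/\Delta)$ — this is what the bipartite density theorem buys us and it must be threaded carefully through the two applications of the dense-to-bi-dense step. I expect the main obstacle to be precisely this: verifying that one application of Theorem~\ref{thm:bipartite-ramsey} genuinely replaces the $s$-fold iteration in Lemma~\ref{lem:dense-to-bi-dense} without secretly reintroducing a $\log\Delta$, i.e.\ that the candidate-set shrinkage in the greedy embedding of $H$ (which loses a factor $\gamma$ per vertex, hence $\gamma^{\Delta}$ total) is absorbed into the $\eps^{-\Delta}$ already present in the hypothesis of the bipartite embedding theorem rather than compounding with it. Once that is pinned down, the orders $N/2^{O(\Delta)}$ for $X$ and $2^{O(\Delta)}k$ for the absorber follow exactly as before, since the absorber's size is governed by the template graph of Proposition~\ref{prop:resilient-template} (constant maximum degree) together with $O(\ell)$ local absorbers each of size $k^2$, and $\ell$ is a constant-fraction times a $2^{-O(\Delta)}$-fraction of $N$.
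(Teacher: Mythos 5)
Your high-level intuition is right — Theorem~\ref{thm:bipartite-ramsey} is indeed what eliminates the extra $\log\Delta$, both by giving a constant (rather than $\Theta(1/\Delta)$) density parameter and by allowing one to bypass Lemma~\ref{lem:dense-to-bi-dense} entirely — but the concrete step by which you propose to produce the bi-dense set has a genuine gap. You claim that ``Tur\'an's theorem (or Lemma~\ref{lem:greedy-embedding}) hands us a subgraph on $\Omega(N)$ vertices of density bounded below by an absolute constant.'' That is not true: Lemma~\ref{lem:greedy-embedding} only forces density $\geq 1/(8\Delta)$, and Tur\'an's theorem gives nothing useful here since $G$ is bipartite (a $G$-free complement can still contain large cliques). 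The subsequent ``bipartite-ization'' step of splitting in two does not boost a $\Theta(1/\Delta)$ density into a constant, so the chain of reasoning does not produce the required constant bi-density.

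What actually works — and what the paper does — is cleaner: one shows that $K$ itself (hence any subset of appropriate order) is already bi-$(\eps, 1/2)$-dense, simply by applying Theorem~\ref{thm:bipartite-ramsey} to the bipartite complement $\overline{K}[X,Y]$ for \emph{arbitrary} disjoint $X, Y \subseteq V(K)$ with $|X| = |Y| = \eps N \geq 16\Delta 2^\Delta k$. If this complement had density $\geq 1/2$, it would contain $G$, contradicting $G$-freeness; hence $K[X,Y]$ has density $\geq 1/2$. Note this verifies the full bi-density property (all pairs of large disjoint sets), not merely cross-density of one fixed bipartition, which is what your bipartite-ization would give and which is \emph{not} sufficient to run the rest of Lemma~\ref{lem:efficient-absorbers}. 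Once bi-$(\eps, 1/2)$-density is in hand for the ambient graph, the remainder of the proof of Lemma~\ref{lem:efficient-absorbers} — including the alias construction, the random split into $Q_1$, Claim~\ref{cl:efficient-mini-absorber}, and the final carving of the reservoir — goes through \emph{verbatim} with Lemma~\ref{lem:embedding-lemma} applied as-is; there is no need for the new ``bipartite embedding-with-aliases lemma'' you propose. The parameters $\gamma = 1/4$, $\eps = \gamma^{2\Delta}/(2^{11}\Delta^4) = 2^{-4\Delta-11}\Delta^{-4}$ then satisfy the hypotheses of Lemma~\ref{lem:embedding-lemma} and give exactly the bi-$(2^{-4\Delta-11}\Delta^{-4}, 2^{-1})$-density in the statement, provided $N \geq 2^{O(\Delta)} k$.
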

\begin{proof}[Sketch of the proof]
  The only thing we need to do differently than in
  Lemma~\ref{lem:efficient-absorbers} is how we find a large
  `bi-$(\eps,2\gamma)$-dense' subgraph of $K$, for appropriately chosen
  parameters. Let $\gamma = 1/4$ and $\eps = \gamma^{2\Delta}/(2^{11}\Delta^4)$.
  Consider any disjoint $X, Y \subseteq V(K)$ of order $\eps N \geq 16\Delta
  2^\Delta k$. Since the complement of $K$ is $G$-free, the density in the
  complement of $K[X,Y]$ can be at most $1-2\gamma$ by
  Theorem~\ref{thm:bipartite-ramsey} applied with $1-2\gamma$ (as $\eps$), and
  thus the density in $K[X,Y]$ is at least $2\gamma$.
\end{proof}

However, to prove an analogue of Lemma~\ref{lem:H-tie-lemma} we require $N \geq
2^{O(\Delta\log\Delta)}k$. The crux comes in the case when $[R,B]$ is \emph{not}
bi-$(\eps,\gamma)$-dense in either of the colours. There, e.g.\ we need every
set of $\Delta$ vertices of $R$ to have a somewhat large common neighbourhood in
$B$, which is established through minimum degree of the form $(1-\gamma)|B|$,
say. This forces $\gamma$ to be of the order $1/\Delta$ and propagates into
applying Lemma~\ref{lem:embedding-lemma} to $R$ (or $B$): $\gamma^\Delta|R|$
must be larger than both $|R|$ and $k$. In conclusion, both $R$ and $B$ need to
be of the order $2^{O(\Delta\log\Delta)}k$. This lemma is an indispensable part
of our argument, and we did not find a way around it. (One would encounter a
similar issue when trying to prove the asymmetric version, an analogue of
Theorem~\ref{thm:main-asymmteric-bounded-deg} for bipartite graphs.)

\begin{lemma}
  Let $H$ be a $k$-vertex bipartite graph with maximum degree $\Delta$. Given a
  $2$-coloured complete graph $K$ on vertices $R \cup B$ such that $K[R]$ has no
  blue $H$, $K[B]$ has no red $H$, and $|R|, |B| \geq 2^{O(\Delta\log\Delta)}k$,
  there exists an $H$-tie with red part in $R$ and blue part in $B$.
\end{lemma}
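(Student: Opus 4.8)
The plan is to mimic the argument of Lemma~\ref{lem:H-tie-lemma}, following the same two-case structure according to whether $K[R,B]$ is bi-dense in one of the colours. The only new ingredient is that whenever we need to embed a bounded-degree bipartite graph into a bi-dense graph, we want to invoke a sharper embedding statement than Lemma~\ref{lem:embedding-lemma}: one that costs only $\eps = \Delta^{-O(1)}$ rather than $\eps = \Delta^{-\Omega(\Delta)}$. Such a statement can be derived from Theorem~\ref{thm:bipartite-ramsey} in the same way the bipartite analogue of Lemma~\ref{lem:efficient-absorbers} above is derived: a bi-$(\eps,\gamma)$-dense graph (with $\eps$ polynomially small in $\Delta$ and $\gamma$ a constant) contains every $k$-vertex bipartite graph of maximum degree $\Delta$, provided the host has at least $2^{O(\Delta)}k$ vertices, because each of the two colour classes of $H$ can be embedded greedily into a linearly-sized set and the bipartite density between the two classes is controlled by Theorem~\ref{thm:bipartite-ramsey}. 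With such an embedding tool, the cost of the density-to-bi-density reduction (Lemma~\ref{lem:dense-to-bi-dense}) with $s = \Theta(\log\Delta)$ and $\eps = \Delta^{-O(1)}$ is only $\eps^{-s} = 2^{O(\log^2\Delta)} = \Delta^{O(\log\Delta)}$, which together with the $2^{O(\Delta)}k$ embedding threshold gives the claimed bound $|R|,|B| \geq 2^{O(\Delta\log\Delta)}k$.

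Concretely I would proceed as follows. Fix $\gamma = \Theta(1/\Delta)$, $s = \Theta(\log\Delta)$, $\beta = \Theta(1/s)$, and suitable $\eps, \eps'$ polynomial in $\Delta^{-1}$ (times constants). As in Lemma~\ref{lem:H-tie-lemma}, the hypothesis that $K[R]$ is blue-$H$-free forces, via Lemma~\ref{lem:greedy-embedding}, that $R$ is $((2\eps)^s\beta^{s-1},\Theta(1/\Delta))$-dense in red, and likewise $B$ in blue; Lemma~\ref{lem:dense-to-bi-dense} then yields $R' \subseteq R$, $B' \subseteq B$ of order $m \geq \eps^{s-1}\beta^{s-2}N \geq 2^{O(\Delta)}k$ that are bi-$(\eps,\Theta(1/\Delta))$-dense in their respective colours. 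Case~1: if $K[R',B']$ fails to be bi-$(\eps',\Theta(1/\Delta))$-dense in, say, red, pick sets $R_1 \subseteq R'$, $B_1 \subseteq B'$ of order $\eps' m$ with small red density across; pass to the half $B_2 \subseteq B_1$ of vertices with few red neighbours in $R_1$; find a blue copy of $H-I$ in $B_2$ using the bipartite embedding tool (here $H-I$ is still bipartite of max degree $\le\Delta$); then each vertex of $I$ has $\Delta$ neighbours in $H$ so has $\ge(1 - O(\Delta\gamma))|R_1| \ge |R_1|/2$ blue common neighbours in $R_1$; use these as the sets $V_i$ and complete a red $H$ in $R_1$ by the bipartite embedding tool again, obtaining an $H$-tie. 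Case~2: if $K[R',B']$ \emph{is} bi-$(\eps',\Theta(1/\Delta))$-dense in both colours, the union of red edges in $R'$, red edges across, and blue edges in $B'$ is bi-$(2\eps',\Theta(1/\Delta))$-dense; apply the bipartite embedding tool to this tripartite-ish graph to find a red $H-I$ in $R'$ joined by red edges to a blue $H$ in $B'$, giving the $H$-tie.

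The main obstacle — and the reason the bipartite maximum-degree version of the \emph{asymmetric} theorem is not claimed — lies entirely in Case~1, specifically in the step where we need every $\Delta$-subset of the blue copy of $H-I$ in $B_2$ to have a large \emph{common} blue neighbourhood in $R_1$. Controlling a common neighbourhood of $\Delta$ vertices forces us to work with vertices of blue degree at least $(1 - O(1/\Delta))|R_1|$, i.e.\ with $\gamma = \Theta(1/\Delta)$ rather than a constant; this $\gamma$ then feeds into the embedding of the red $H$ in $R_1$, where the relevant inequality $\gamma^\Delta \cdot \eps' m \gtrsim k$ (analogous to $\gamma^\Delta\eps' m/2 - 2\Delta\eps m \ge k$ in Lemma~\ref{lem:H-tie-lemma}) needs $m \ge \gamma^{-\Delta}k = 2^{\Omega(\Delta\log\Delta)}k$. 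So even though the dependent-random-choice-free bipartite embedding removes one $\log\Delta$ from the cost of Lemma~\ref{lem:dense-to-bi-dense}, it cannot remove the $\gamma^{-\Delta}$ factor that arises from insisting on common neighbourhoods of $\Delta$ vertices across a non-bi-dense pair, which is exactly why the final bound is $2^{O(\Delta\log\Delta)}k$ and not $2^{O(\Delta)}k$. The remaining computations (checking the parameter inequalities, the bi-density of the union graph, the counting $2|H'| - \dots$) are routine and essentially identical to those in the proof of Lemma~\ref{lem:H-tie-lemma}.
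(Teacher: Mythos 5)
The paper itself gives no proof of this lemma, only an informal discussion in the concluding section, so there is nothing to compare line by line; but the proposal as written does not actually reach the stated bound. The central claim on which all the savings rest -- that for bipartite $H$ one can work with a bi-$(\eps,\gamma)$-dense host where $\eps$ is only \emph{polynomially} small in $\Delta$ -- is not justified. The ``bipartite embedding tool'' you describe (split the vertex set of the host in half and apply Theorem~\ref{thm:bipartite-ramsey}) finds an unconstrained copy of $H$; it does \emph{not} give the candidate-set control $V_i$ that both cases of the $H$-tie argument rely on. In Case~1 the vertices of $I$ must be mapped into prescribed common-blue-neighbourhoods inside $R_1$, and in Case~2 the pieces $H-I$ and $H$ must be routed into $R'$ and $B'$ respectively with cross edges of a fixed colour. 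For such constrained embeddings you are forced back onto Lemma~\ref{lem:embedding-lemma}, which builds the embedding vertex by vertex with a $\gamma$-fraction loss per step and therefore genuinely needs $\eps\lesssim\gamma^\Delta$. Under your stated choice $\gamma=\Theta(1/\Delta)$ this means $\eps = \Delta^{-\Omega(\Delta)}$, and Lemma~\ref{lem:dense-to-bi-dense} with $s=\Theta(\log\Delta)$ then costs $\eps^{-s} = 2^{\Omega(\Delta\log^2\Delta)}$ -- no better than the non-bipartite case, contradicting the claimed $2^{O(\Delta\log\Delta)}k$.

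What is actually gained for bipartite $H$ is not a relaxation of $\eps$ but a boost of $\gamma$ from $\Theta(1/\Delta)$ to a \emph{constant}: applying Theorem~\ref{thm:bipartite-ramsey} directly to the blue subgraph of $K[R]$ shows that $K[R]$ is bi-$(\eps_0, 1/2)$-dense in red with $\eps_0 = 2^{O(\Delta)}k/|R|$, bypassing Lemma~\ref{lem:dense-to-bi-dense} entirely -- this is exactly how the paper's bipartite absorbing-lemma sketch proceeds, and your Lemma~\ref{lem:greedy-embedding}+Lemma~\ref{lem:dense-to-bi-dense} route throws that advantage away. With constant $\gamma$ inside $R$ and $B$, the $\gamma^\Delta$ loss within either colour class is only $2^{O(\Delta)}$; the unavoidable $\Delta^\Delta = 2^{\Theta(\Delta\log\Delta)}$ factor enters through the $\Theta(1/\Delta)$ density of the cross pair $[R,B]$, which the common-neighbourhood step in Case~1 forces, and which is then paid for when Lemma~\ref{lem:embedding-lemma} is applied to the ``union graph'' in Case~2 (and through the correspondingly small $\eps'$ needed there). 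So while your final diagnosis -- that the $\gamma^{-\Delta}$ coming from common-neighbourhood control is the irreducible obstruction -- is correct in spirit, the accounting that attributes it to the embedding of the red $H$ in $R_1$, and the parameter regime you set up, do not support the $2^{O(\Delta\log\Delta)}k$ threshold.
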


That said, the following is a result we can obtain when $H$ is a $k$-vertex
bipartite graph of maximum degree $\Delta$. Note that, similarly as before, the
requirement for $n$ is (almost) optimal up to the $\log\Delta$ factor in the
exponent, as $r(H) = 2^{O(\Delta)}k$ (see \cite{graham2000graphs} for the lower
bound and \cite{conlon2008new, fox2009density} for the upper bound). However, it
is unclear whether a dependence in $k$ is necessary, as no construction similar
to the one in Proposition~\ref{prop:symmetric-lower-bound} would work in the
bipartite case.

\begin{theorem}\label{thm:main-symmetric-bipartite}
  Let $H$ be a $k$-vertex bipartite graph with no isolated vertices and maximum
  degree $\Delta$. There is a constant $c = c(H)$ and $n_0 =
  2^{O(\Delta\log\Delta)}k$ such that for all $n \geq n_0$
  \[
    r(nH) = (2k - \alpha(H))n + c.
  \]
\end{theorem}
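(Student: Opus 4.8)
The proof copies that of Theorem~\ref{thm:main-symmetric} almost verbatim, with the two bounded-degree tools swapped out for their bipartite counterparts: Lemma~\ref{lem:efficient-absorbers} is replaced by the bipartite absorbing lemma stated just above (host order $2^{O(\Delta)}k$), and Lemma~\ref{lem:H-tie-lemma} by the bipartite $H$-tie lemma (host order $2^{O(\Delta\log\Delta)}k$). The first thing I would do is establish a bipartite analogue of Theorem~\ref{thm:structure-in-critical-colouring}: if $H$ is a $k$-vertex bipartite graph with no isolated vertices and maximum degree $\Delta$, and $n \geq 2^{O(\Delta\log\Delta)}k$, then $K_{r(nH)-1}$ admits a critical $2$-colouring with a partition $R \cup B \cup E$ of exactly the same shape (monochromatic $R$, $B$ and $[R,B]$; $[R,E]$ blue, $[B,E]$ red; $|E| \leq r(H)$; $|R|,|B| \geq k(|E|+1)$; no $H$-tie meeting $E$).

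To obtain it I would rerun the proof of Theorem~\ref{thm:structure-in-critical-colouring} essentially line for line: set $q := 2^{O(\Delta\log\Delta)}k$ large enough that the bipartite $H$-tie lemma applies to vertex sets of order $q$, and invoke the bipartite absorbing lemma wherever Lemma~\ref{lem:efficient-absorbers} was used. Every absorber constructed along the way is for $H$ or for $\hat H := H - I$ (with $I$ a maximum independent set); both are bipartite of maximum degree at most $\Delta$, and $\hat H$ is non-empty because $H$ has an edge, so the bipartite absorbing lemma applies and yields absorbers of order at most $2^{O(\Delta)}k = o(q)$. The only facts about the independence number used in that proof are $\frac{k}{\Delta+1} \leq \alpha \leq (1-\frac{1}{\Delta+1})k$, which still hold, and every size estimate in Claims~\ref{cl:exceptional-set-size}--\ref{cl:no-H-tie-new-colouring} and the closing tiling count only needs $q$ and $r(H) = 2^{O(\Delta)}k$ to be much smaller than $n$ --- precisely what $n \geq 2^{O(\Delta\log\Delta)}k$ guarantees. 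Hence the structural theorem goes through with this weaker requirement on $n$.

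Given the bipartite structural theorem, Theorem~\ref{thm:main-symmetric-bipartite} follows exactly as Theorem~\ref{thm:main-symmetric} is deduced from Theorem~\ref{thm:structure-in-critical-colouring} --- the omitted argument of Buci\'c and Sudakov, itself a streamlining of Burr's --- so that $n_0 = 2^{O(\Delta\log\Delta)}k$ as claimed, and when $H$ is connected a value for $c = c(H)$ can be extracted via the bipartite analogue of Proposition~\ref{prop:tight-bounds} (likewise a corollary of the structural theorem); in particular $c(H) = -1$ whenever some vertex of $H$ has its neighbourhood contained in a maximum independent set, e.g.\ for complete bipartite graphs and even cycles.

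The single real bottleneck is the $H$-tie lemma: its bipartite version needs $|R|,|B| \geq 2^{O(\Delta\log\Delta)}k$, whereas the absorbing step gets by with $2^{O(\Delta)}k$, so the former alone dictates $n_0$. As discussed in Section~\ref{sec:concluding-remarks}, the extra $\log\Delta$ originates in the case where $[R,B]$ is not bi-$(\eps,\gamma)$-dense in either colour: there one needs every set of $\Delta$ vertices of $R$ (or of $B$) to have a large common neighbourhood on the other side, which forces a minimum-degree hypothesis of the form $(1-\gamma)|R|$ with $\gamma = \Theta(1/\Delta)$, and feeding this through Lemma~\ref{lem:embedding-lemma} costs a factor $\gamma^{\Delta} = 2^{-\Theta(\Delta\log\Delta)}$. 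Removing it --- and thereby matching the exponent of $r(H) = 2^{O(\Delta)}k$ --- would require a genuinely different treatment of that case, which we do not have; we also leave open whether a linear dependence on $k$ is actually necessary here, since the lower-bound construction of Proposition~\ref{prop:symmetric-lower-bound} has no bipartite counterpart.
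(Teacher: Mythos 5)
Your proposal matches the paper's intended argument: prove a bipartite analogue of Theorem~\ref{thm:structure-in-critical-colouring} by substituting the two bipartite lemmas stated in Section~\ref{sec:concluding-remarks} (the absorbing lemma with host order $2^{O(\Delta)}k$ and the $H$-tie lemma with host order $2^{O(\Delta\log\Delta)}k$) for Lemma~\ref{lem:efficient-absorbers} and Lemma~\ref{lem:H-tie-lemma}, then deduce the theorem exactly as Theorem~\ref{thm:main-symmetric} follows from the structural result. The paper leaves this deduction implicit and, like you, singles out the $H$-tie lemma as the sole bottleneck responsible for the $\log\Delta$ factor in the exponent.
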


All this leaves an obvious open problem of trying to remove the logarithmic
factor in the exponent for several classes of graphs $H$: (bipartite) bounded
degree graphs, and graphs with a fixed edge number. Furthermore, the general
natural question of Buci\'{c} and Sudakov~\cite{bucic2021tight}, showing that $n
= O(r(H)/|H|)$ is sufficient to establish a long term behaviour on the Ramsey
number of $nH$, remains open and intriguing.

There are at least two other notions of sparsity that are studied in the
literature with respect to Ramsey numbers. The most notable one is that of
$d$-degenerate graphs (see, e.g., \cite{conlon2015recent, fox2009two,
lee2017ramsey}). It is plausible that a conditional improvement, similar to the
one we obtained for bounded degree graphs, is within reach for $r(nH)$ when $H$
is $d$-degenerate. There are several obstacles to be overcome if one is to
follow the strategy outlined here, and we did not pursue this direction further.
Another possibility and a step towards it would be to look at an intermediate
class between bounded degree and bounded degeneracy, that of $p$-arrangeable
graphs. We refer to the survey~\cite{conlon2015recent} for definitions and known
results in this scenario, but do not delve into it more.

{\small \bibliographystyle{abbrv} \bibliography{references}}

\end{document}